\theoremstyle{plain} 
\newtheorem{lemma}[equation]{Lemma} 
\newtheorem{proposition}[equation]{Proposition} 
\newtheorem{theorem}[equation]{Theorem}
\theoremstyle{definition}
\newtheorem{definition}[equation]{Definition} 
\theoremstyle{remark}
\newtheorem{remark}[equation]{Remark}
\numberwithin{equation}{section}
\title[Local $ Tb$  Theorem: A Direct Proof]{On the  Local $ Tb$ Theorem: A Direct Proof under the Duality Assumption} 
\subjclass[2000]{Primary: 42B20 Secondary: 42B25, 42B35}
\keywords{Local Tb theorem, T1 theorem, corona, twisted martingale transform, stopping cubes}
\author{Michael T. Lacey}   
\address[M.T.L.]{ School of Mathematics, Georgia Institute of Technology, Atlanta GA 30332, USA}
\email {lacey@math.gatech.edu}
\thanks{Research supported in part by grant NSF-DMS 0968499, 
and  a grant from the Simons Foundation (\#229596 to Michael Lacey).}
\author{Antti V. V\"ah\"akangas}
\address[A.V.V.]{Department of Mathematics and Statistics,
P.O. Box 68, FI-00014 University of Helsinki, Finland} \email{antti.vahakangas@helsinki.fi} \thanks{A.V.V.  was
supported by the School of Mathematics, Georgia Institute of Technology, and 
by the Finnish Academy of Science and Letters, Vilho, Yrj\"o and
Kalle V\"ais\"al\"a Foundation.
}
\begin{document}

\begin{abstract}
	We give a new direct proof of the `local $ Tb$ Theorem in the Euclidean setting and under the assumption of dual exponents'. 
	This theorem provides a flexible framework for proving the boundedness of a Calder\'on--Zygmund operator, 
	supposing the existence of systems of local accretive functions.  We assume that the integrability exponents
	on these systems of functions  are of the form $ 1/p+1/q\le1$, 
	the `dual case' $1/p+1/q=1$ being the most difficult one. 
	Our proof is direct: it avoids a reduction to the perfect dyadic case unlike some previous approaches.
	 The principal point of interest  is in the 
	use of random grids and the corresponding construction of the corona.  We also utilize   
	certain twisted martingale transform inequalities.  
\end{abstract}	
	
\maketitle




\section{Introduction} 

Our subject is the local $ Tb$ theorem in the classical Euclidean setting.
There are many results under this topic, all of which extend the 
David--Journ\'e $ T1$ Theorem \cite{MR763911}, and the $ Tb$ Theorem of Christ \cite{MR1096400}, by  giving 
 flexible conditions under which an operator $ T$ with a Calder\'on--Zygmund kernel 
 extends to a bounded linear operator on $ L ^2 $; 
 the lectures of Hofmann \cite{MR2664559}  indicate the range of interests in this type of results. 
  By `local' we understand that the $Tb$ conditions involve a family of test functions $b_Q$, one for each cube $Q$, which 
should satisfy a non-degeneracy condition on its `own' $Q$.
Furthermore, both $b_Q$ and $Tb_Q$ are subject to
normalized  integrability conditions on $Q$. 
Symmetric assumptions are imposed on $T^*$.

 The goal of this paper is to  give a new
 direct proof of a known local $Tb$ theorem, Theorem \ref{t.main1}. This theorem applies, in particular, when the integrability conditions imposed in the hypotheses 
are those in duality, namely, $ 1/{p_1}+1/{p_2}= 1$.
Our argument is direct in the sense that it avoids 
a reduction to the so-called perfect  dyadic  case, as in Auscher-Yang \cite{MR2474120}.
A companion paper \cite{lv-perfect} addresses a perfect dyadic variant of Theorem \ref{t.main1}
for the full range $1<p_1,p_2<\infty$; it contains many of the features of the argument in the present paper, with significantly fewer technicalities.

We say that $T$ is a \emph{Calder\'on--Zygmund operator}, if
it is a bounded linear operator on $L^2(\mathbf{R}^n)$ with
the following representation: for every $f\in L^2(\mathbf{R}^n)$,
 \begin{equation*}
Tf(x) = \int_{\mathbf{R}^n} K (x,y) f (y)\,dy\,,\qquad x\not\in \mathrm{supp}(f)\,,
\end{equation*}
where the kernel 
$ K : \mathbf{R}^n \times \mathbf{R}^n \to \mathbf{C}$ is assumed to satisfy the following estimates for some $\eta>0$:
\begin{align} \label{e.size}
	\lvert  K (x,y)\rvert & \leq \lvert  x-y\rvert ^{-n} \,, \qquad  x\not= y \,, 
	\\ \label{e.smoothness}
	\lvert  K (x,y)- K (x',y)\rvert
	+	\lvert  K (y,x)- K (y,x')\rvert
	& \leq \frac { \lvert  x-x'\rvert ^{\eta } } {\lvert  x-y\rvert ^{n+ \eta } }\,, 
	\qquad \lvert  x-x'\rvert< \tfrac 12 \lvert  x-y\rvert\,.
\end{align}
We define $\mathbf{T}$ to be the norm of $ T$ as an operator on $ L ^2(\mathbf{R}^n) $.  

\begin{definition}\label{d.system}
	Fix $ 1<p < \infty $.   
	A collection of functions $ \{b _Q \;:\; Q\subset \mathbb R ^{n} \textup{ is a cube}\} $ is 
	called \emph{a system of $p$-accretive functions with constant $\mathbf A>1$} if the
	following conditions (1) and (2) hold for each cube $ Q$:
\begin{enumerate}
\item  $ b_Q$ is supported on $ Q$ and $ \int _{Q} b_Q (x) \; dx = \lvert  Q\rvert $. 
\item  $ \lVert b_Q\rVert_{p} \le \mathbf A|Q|^{1/p}$. 
\end{enumerate}

\end{definition}

We aim to prove the following local $Tb$ theorem; denote $p'=p/(p-1)$.

\begin{theorem}\label{t.main1}  
Fix $ 1< p_1, p_2< \infty $ so that $1/p_1 + 1/p_2\le 1$.
Suppose $T$ is a Calder\'on--Zygmund operator for which
there	 are systems $ \{b ^{j} _{Q}\}$ of $p_j$-accretive functions, $j\in \{1,2\}$, with a constant $ \mathbf A$, 
	satisfying the following testing condition: there is a constant $ \mathbf T _{\textup{loc}}$ so that for all cubes $Q$, 
	\begin{equation*}
		\int _{ Q} \lvert  T b_Q^1\rvert ^{p_2'}  \le \mathbf T ^{p_2'} _{\textup{loc}} \lvert  Q\rvert \,, 
		\qquad 
		\int _{ Q} \lvert  T^{\ast} b_Q^2\rvert ^{p_1'}  \le \mathbf T ^{p_1'} _{\textup{loc}}\lvert  Q\rvert\,.
\end{equation*}
Then, we have a quantitative estimate $ \mathbf T \lesssim_{n,\eta,p_1,p_2, \mathbf A} 1+ \mathbf T _{\textup{loc}}$
for the operator norm of $T$.
\end{theorem}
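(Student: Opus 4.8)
The plan is to establish an a priori estimate: assuming $\mathbf T<\infty$, we prove $\mathbf T\lesssim 1+\mathbf T_{\textup{loc}}$ with a constant independent of the qualitative bound. A standard limiting argument (truncating the kernel, or restricting to finitely many scales/grids) then removes the a priori assumption. By a $T1$-type reduction, it suffices to bound the bilinear form $\langle Tf,g\rangle$ for $f,g$ nice (say bounded, compactly supported), and by a standard good-$\lambda$ / sparse-type argument it is enough to control a dyadic model. The key device, as announced in the abstract, is to average over random dyadic grids $\mathcal D=\mathcal D^\omega$: expanding $f=\sum_{Q\in\mathcal D}\Delta_Q f$ and $g=\sum_{R\in\mathcal D}\Delta_R g$ in martingale differences, the off-diagonal terms (those with $Q,R$ far apart or with very different side lengths) are handled by the Calder\'on--Zygmund kernel estimates \eqref{e.size}, \eqref{e.smoothness} exactly as in the proof of the $T1$ theorem, and contribute $\lesssim\mathbf T_{\textup{loc}}+1$ after taking the expectation $\mathbf E_\omega$; the point of the randomization is that the ``bad'' cubes (those close to the boundary of an ancestor) have small expected measure, so the paraproduct-type diagonal terms are the only serious contribution.

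Next I would build the corona decomposition adapted to the accretive systems. For a fixed top cube $Q_0$, I construct stopping cubes: descending from $Q_0$, I stop at a maximal subcube $Q$ at which either the average $|Q|^{-1}\int_Q b^1_{Q_0}$ is too small (the accretivity degenerates) or some $L^{p}$-Carleson-type average of $b^1_{Q_0}$ or of $Tb^1_{Q_0}$ on $Q$ is too large. On the resulting tree $\mathcal S$ of ``good'' cubes hanging below $Q_0$, $b^1_{Q_0}$ behaves like a non-degenerate accretive bump, so one can replace the usual martingale differences $\Delta_Q f$ by the ``twisted'' or $b$-adapted martingale differences, $\Delta^{b^1}_Q f$, built from the averaging operators $E^{b^1}_Q f=\mathbf 1_Q \,\langle f\rangle_Q /\langle b^1_{Q_0}\rangle_Q \cdot b^1_{Q_0}$ restricted to the stopping tree. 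The standard Carleson packing condition for the stopping cubes — which follows from conditions (1), (2) of Definition~\ref{d.system} together with the testing hypothesis, via an $L^p$ stopping-time / John--Nirenberg argument — gives that $\sum_{Q\in\mathcal S,\, Q\subset Q'}|Q|\lesssim|Q'|$, and hence that these twisted martingale transforms are bounded on $L^2$. This is where the ``twisted martingale transform inequalities'' enter: one needs that $\sum_Q \varepsilon_Q \Delta^{b^1}_Q$ is bounded on $L^2$ uniformly in signs $\varepsilon_Q$, which is a square-function estimate that is not quite orthogonal (because the $\Delta^{b^1}_Q$ are not projections) but is controlled by the Carleson condition.

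With these pieces, the bilinear form decomposes, within each corona, into: a paraproduct term, where one factor is frozen to its average and which is controlled directly by the testing bound $\int_Q|Tb^1_Q|^{p_2'}\le\mathbf T_{\textup{loc}}^{p_2'}|Q|$ via a Carleson embedding; a ``stopping'' term collecting the contributions at stopping cubes, controlled by the packing condition; and a remaining ``diagonal'' term where $Q$ and $R$ lie in the same corona and have comparable size, which is estimated by the kernel smoothness \eqref{e.smoothness} plus the twisted square-function bounds. The hypothesis $1/p_1+1/p_2\le1$ is used to pair $L^{p_1}$-information on $b^1$ against $L^{p_2'}$-information on $Tb^2$ (and symmetrically) by H\"older; the dual case $1/p_1+1/p_2=1$ is the tight one because then there is no room to spare and the Carleson/packing estimates must be proved at the exact endpoint exponent, forcing the use of the full strength of the $L^p$ stopping-time argument rather than an $L^2$ or $L^\infty$ one. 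I expect the main obstacle to be precisely this endpoint bookkeeping: controlling the interaction between two different random grids (one adapted to $b^1$, one to $b^2$) and the two separate corona decompositions simultaneously, and ensuring that the twisted martingale transforms remain bounded when the accretive function for $f$'s corona and the one for $g$'s corona disagree — this forces an additional ``switching'' argument, paid for by the smallness of bad cubes coming from the randomization, which is the technical heart that distinguishes this direct proof from the perfect-dyadic reduction.
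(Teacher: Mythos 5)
Your outline reproduces the broad architecture of the paper --- a $T1$ reduction, averaging over random grids, a stopping/corona construction with $b$-adapted (``twisted'') martingale differences, and a split of the bilinear form into paraproduct/stopping/nearby/far/diagonal pieces --- but you are missing the one idea that makes the direct argument close, and the step you treat as routine is precisely the one that fails.

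You say that ``the bad cubes have small expected measure, so the paraproduct-type diagonal terms are the only serious contribution.'' That is the Nazarov--Treil--Volberg device for \emph{ordinary} martingale differences, and it does not transfer to the local $Tb$ setting. The obstruction (made explicit in Hyt\"onen--Martikainen \cite{1011.0642}*{Remark 4.1}, which the paper cites as its principal motivation; see Remark~\ref{r.t+h}) is that a twisted difference $\Delta^{b^1}_Q f_1$ depends on $\mathcal D^1$ through the stopping cubes and the $b$-functions attached to them, and once the stopping construction is done it is no longer independent of the randomization $\omega^2$ either; taking $\mathbb E^2$ does not annihilate bad-cube terms one by one. You gesture at a ``switching argument'' but do not say what it is, and without one the argument does not close. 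The paper's fix is structural rather than probabilistic: it perturbs the selected test functions to $\beta^j_S := b^j_S - \sum_{\textup{bad }Q:\,\pi_{\widetilde{\mathcal S}^j}Q=S}D_Q b^j_S$, which forces $\Delta^{\beta^j}_Q f_j\equiv 0$ on any bad cube $Q$ with $\widetilde{\mathcal S}^j$-parent $S$ (Proposition~\ref{p.D=0}), so the bad contributions vanish deterministically. The price is that $\beta^j_S$ now depends on both $\omega^1$ and $\omega^2$, poisoning the stopping selection; the paper handles this by truncating the stopping tree at Type A/B/C cubes whose union is small on average (Lemma~\ref{e.EB<}), and, in the diagonal term, by proving genuinely new perturbation inequalities (Theorem~\ref{l.perturb}, Theorem~\ref{t.perturb_special}) that switch $\Delta^{\beta^1}_P$ back to $\Delta^{b^1}_P$ so the testing hypothesis can be applied (Lemma~\ref{e.Dia}). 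None of this is in your sketch. A secondary quibble: you locate the difficulty of the dual case in ``endpoint bookkeeping'' for a John--Nirenberg/packing argument, whereas in the paper the duality hypothesis enters concretely through H\"older pairings with exponents $p_2',p_2$ and the repeatedly-used inequality $p_k'\le p_j$, not through an endpoint Carleson estimate.
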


In the case of perfect dyadic operators,
the full range $1<p_1,p_2< \infty$ of exponents  is allowed, as was shown in \cite[p. 48]{MR1934198}.
It was also hoped that the result could be lifted to the continuous 
case. This lifting turned out to be a difficult problem:
some of the direct methods \cite{0705.0840,1011.0642} to attack it
require assumptions that are  stronger than the duality assumption.
Theorem \ref{t.main1}  is due to Auscher--Yang \cite{MR2474120}, who provide an 
indirect argument---a
reduction to the perfect  dyadic  case.
The Auscher--Yang paper does not reach the  difficult case $1/p_1+1/p_2>1$, which
is also known as the `Hofmann's problem' as it was
 emphasized by Hofmann in \cite{MR2664559}. This problem was 
partially solved by
Auscher--Routin \cite{1011.1747} via adapting the
 Beylkin--Coifman--Rokhlin (BCR)  algorithm, see \cite{MR1085827,MR1110189}, as well as the martingale 
transform inequalities; at the same time, Auscher--Routin obtain a direct proof of Theorem \ref{t.main1}.
An essentially  full solution to Hofmann's problem 
has very recently been obtained by T. Hyt\"onen and 
F. Nazarov, \cite{hytonen_nazarov}. By applying perturbation techniques for both the operator and the accretive functions, they obtain 
a variant of Theorem
\ref{t.main1} for  $1<p_1,p_2<\infty$.

Our main contribution is an alternate direct proof 
of Theorem \ref{t.main1}.  
It is  desirable to have such proofs from the viewpoint of extensions of the argument to other settings.
As an example,
in  the literature \cite{1011.0642, 1201.0648, MR1909219} on the local $Tb$ theorem in the non-homogeneous setting \cite{MR1909219}
one encounters stronger $L^\infty(\mathbf{R}^n)$ (or $\mathrm{BMO}$) conditions on 
$Tb_Q$'s, as well as on test functions $b_Q$.
 Some of the
techniques in the present paper have been subsequently applied
to relax these conditions in the case of square functions, \cite{lacey_martikainen}.
It even seems plausible that a variant of Theorem \ref{t.main1} could
be recovered in the non-homogeneous setting; see also \cite{lacey_vahakangas}.

\subsection*{Outline of the proof}
Let us turn to a discussion of the proof technique.  
As is quite common, absorbtion parameters enter into the proof at 
several stages, permitting us to  resort to  the assumed 
 finite---but non-quantitative---norm bound on $ T$, provided it is multiplied  by a small  absorption  parameter.  
We use the well-known non-homogeneous techniques of \cite{NTV1},  in particular, the 
powerful technique of `good cubes'.  
In the local $ Tb$ setting, there is however a delicate problem with the typical method of restricting to the  
good cubes,
as is pointed out by Hyt\"onen--Martikainen \cite{1011.0642}*{Remark 4.1}. 
An important innovation of the present paper is the  corona construction, which
enables us to restrict to good cubes in a natural way.
This construction depends on two random dyadic grids, $ \mathcal D ^{1}$ and $ \mathcal D ^{2}$,  
 that are defined on 
independent probability spaces $ \Omega ^{j}$, $ j=1,2$. 
A cube $ Q\in \mathcal D ^{1}$  is called \emph{bad}, if it is close to the boundary of some 
significantly larger cube in the  other grid,  $ \mathcal D ^{2}$.  
The badness of $Q$ is an event in $ \Omega ^{2}$ with probability that can be made arbitrarily small,  giving rise to an absorption parameter.   
A cube $Q$ is \emph{good}, if it is not bad.

Let us describe the corona construction in three steps.
First, by a  $T1$ theorem, \cite{MR763911},
it suffices consider the bilinear form $ \langle  T \widetilde  f_1 , \widetilde   f_2 \rangle$, where $ \lvert \widetilde  f_1\rvert = \lvert  \widetilde f_2\rvert = \mathbf 1_{Q ^{0}}  $ 
for a fixed cube $ Q ^{0}$.  
One projects $ \widetilde  f _1 $ onto the good cubes, calling the result $ f_1$, 
which can be viewed as a function of $ \Omega ^{1} $ and $ \Omega ^{2}$. This 
also contributes an error term, that is small in all $ L ^{p}$ spaces on average, and is treated by the first of several absorption arguments.
One then makes a standard selection of stopping cubes $ \widetilde{\mathcal S} ^{j} \subset \mathcal D ^{j}$ and local testing functions 
$ b ^{j} _{S}$ for $ S\in \mathcal S ^{j}$.  The stopping cubes $ \widetilde{\mathcal S} ^{j}$ is a sparse
collection, in particular, 
it is a Carleson sequence of cubes.  

In the next step, we construct functions $ \beta ^{1}_S$ by projecting $  b ^{1} _{S}$ 
\emph{away from} those bad cubes  which  themselves have $ S$ as a parent in $\widetilde{ \mathcal S} ^{1}$.
By doing so, we gain the following  desirable feature:  the twisted martingale difference 
of  $ f_1$, with respect to 
$ \beta ^{1}_S$ and over a bad cube $ Q$ with $\widetilde{\mathcal{S}}^1$ parent $S$, 
will  typically be zero.  
On the downside,  $ \beta ^{1} _{S}$ is now a function of $ \Omega ^{1}$ and $ \Omega ^{2}$, 
and the original collection of stopping cubes $\widetilde{ \mathcal S} ^{1}$ is 
not so well adapted to the $ \beta ^{1} _{S}$.   On the other hand, favorably to us,
$ \beta ^{1} _{S}$ can be viewed
as small perturbation of $b^1_S$. 

In the last step, to adopt the usage of perturbed functions $ \beta ^{1} _{S}$ in twisted martingale 
differences, 
one cannot run the stopping cube selection process again, due to the unacceptable dependices on $ \Omega ^{1}$ 
and $ \Omega ^{2}$.  Instead, one  invokes absorbtion, arguing that one can \emph{truncate} the stopping tree 
$\widetilde{ \mathcal S} ^{1}$ inside a set $ B ^{1}$  that is small on average.
The corona construction is now   described, and its details take up \S\ref{s.corona}, which is  almost  half 
the length of this paper. 

There are also  tools in \S\ref{s.inequalities} that are useful, namely martingale transform inequalities for twisted martingale differences, 
and the associated half-twisted inequalities that are \emph{universal}, in that they hold in all $ L ^{q}$-spaces. 
These inequalities also play a crucial role in \cite{1011.1747}*{Lemma 5.3} and in
 \cite{lv-perfect}.

Turning to the remaining part of the argument, one is in a familiar situation 
\cites{NTV1}
in the sense that 
only \emph{good} cubes 
$ P \in \mathcal D ^{1}$ and $ Q\in \mathcal D ^{2}$ need to be considered.  
The double sum over $ P, Q$ is reduced, by symmetry, to  the case of  $ \ell P \ge \ell Q$, and this sum is further decomposed 
into subcases according to the position  and size  of $ Q$ relative to $ P$. 
The case of $ Q$ deeply inside $ P$ admits 
a  direct control, by using the twisted martingale transform inequalities;
this `inside' case incorporates the  paraproduct term. 
For experts we remark that we do not appeal to Carleson measure arguments at any stage of the argument; in this we follow  \cites{1108.2319,1201.4319,1011.1747}.
The case of $ P$ and $ Q$ having the same approximate size 
 and position  requires  new perturbation inequalities for the 
twisted martingale transforms.  
This `diagonal' case is the hardest one in many 
existing arguments,
including ours.
A potentially troublesome case is when $ Q \subset 3P \setminus P$ and
 $Q$ is substantially smaller than $P$; however, due to goodness, $Q$ is still relatively far from the boundary of $ P$.
We address  this `nearby' case  by exploiting the smoothness condition on the kernel $ K$, and the universal half-twisted inequalities.  
 The remaining `far' case depends upon standard off-diagonal estimates for singular integrals,
and universal martingale transform inequalities.

\subsection*{Notation}
For a cube $ Q$, $ \langle f  \rangle_Q := \lvert  Q\rvert ^{-1} \int _{Q} f \; dx  $, 
and $ \ell Q = \lvert  Q\rvert ^{1/n} $ is the side length of the cube. $ A \lesssim B$ means that 
$ A \le C \cdot B$, where $ C$ is an unspecified constant which needs not be tracked.
The distances in $\mathbf{R}^n$ are measured in terms of the supremum norm, 
$\lvert x\rvert = \rVert x\rVert_\infty$ for $x\in\mathbf{R}^n$.
Given $ Q \in \mathcal D ^{j}$, we denote by $ \textup{ch}(Q)$ the $ 2 ^{n}$ dyadic children of $ Q$. 
Given $ \mathcal S\subset \mathcal D ^{j}$, we write  $ \textup{ch} _{\mathcal S} (S)$ for the $ \mathcal S$-children of $ S\in \mathcal S$: 
these are the maximal elements $ S'$ of $ \mathcal S$ that are strictly contained in $ S$.
For a  cube $ Q\in \mathcal D ^{j}$, that is contained in a cube in $\mathcal{S}$,
we take $ \pi _{\mathcal S} Q$ to be the $ \mathcal S$-parent of $ Q$: this is the minimal element of $ \mathcal S$ that 
contains $ Q$.

\subsection*{Acknowledgements} The authors would like to thank the referee for useful comments.

\section{The Corona} \label{s.corona}


It is a straightforward consequence of the $T1$ theorem, \cite{MR763911},
that 
\[
\mathbf{T} \lesssim 1 
+ \sup_{Q\subset\mathbf{R}^n\textup{ cube}} 
\lvert Q\rvert^{-1} \lVert \mathbf{1}_{Q} T^\ast \mathbf{1}_Q \lVert_{L^1}
+\sup_{Q\subset\mathbf{R}^n\textup{ cube}} 
\lvert Q\rvert^{-1}\lVert \mathbf{1}_{Q} T\mathbf{1}_Q \lVert_{L^1}\,.
\]
Without loss of generality, we can
assume that the last term dominates.
Fix a cube $ Q ^{0} $ for which
\begin{equation}\label{e.witness}
\mathbf{T}  \lvert  Q ^{0}\rvert  
\lesssim 
\lVert \mathbf 1_{Q^0}  T \mathbf 1_{Q ^{0}}\rVert_{L^1}\,.
\end{equation}
For notational convenience, let us take
two functions $ \widetilde  f _1  , \widetilde f_2$ such that $ \lvert \widetilde   f_1 \rvert=\lvert \widetilde  f_2\rvert =  \mathbf 1_{Q^0} $ and
$\lVert \mathbf 1_{Q^0}  T \mathbf 1_{Q ^{0}}\rVert_{L^1} = \langle T \,\widetilde  f_1,
\widetilde  f_2\rangle$.
The main purpose of the present section is to devise a corona-type decomposition, which helps us to 
restrict to good cubes, after which it will be straightforward to complete the proof of
the following lemma. 

\begin{lemma}\label{l.sharp} Fix $ 0< \upsilon _0 < 1$. There are
	functions $ f_1 $ and $ f_2 $, and
a constant  $ C>0 $ independent of both 
$\mathbf{T}$ and $\mathbf{T}_{\textup{loc}}$, such that the following inequalities hold:
	\begin{gather}\label{e.sharp<eta} 
		\lVert\widetilde  f_j- f_j  \rVert_{2}   < \upsilon_0   \lvert  Q ^{0}\rvert ^{1/2}  \,, \qquad j=1,2\,, 
		\\  \label{e.sharpIP}
		\bigl\lvert \langle T f_1,  f_2\rangle\bigr\rvert < \{C   (1+\mathbf T _{\textup{loc}})  + \upsilon_0 \mathbf T \}\lvert  Q ^{0}\rvert   \,. 
\end{gather}
\end{lemma}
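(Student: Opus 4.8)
The plan is to realize $ f_1, f_2 $ as the outputs of the three–step corona construction sketched in the introduction, and then to trace how each step interacts with the bilinear form $ \langle T f_1, f_2\rangle $. First I would set up the two independent random dyadic grids $ \mathcal D^1, \mathcal D^2 $ on $ \Omega^1\times\Omega^2 $ and define badness of a cube $ Q\in\mathcal D^1 $ as proximity to the boundary of a much larger cube of $ \mathcal D^2 $ (and symmetrically). The key quantitative input here is that $ \mathbb P^2(\{Q\textup{ is bad}\}) $ can be made as small as we wish by taking the `much larger' threshold large; this is what produces the first absorption parameter. I would then let $ f_1 $ be the projection of $ \widetilde f_1 $ onto the good cubes of $ \mathcal D^1 $ (in the sense of the martingale/twisted–martingale decomposition adapted to the stopping tree $ \widetilde{\mathcal S}^1 $, once that is selected), and similarly $ f_2 $ from $ \widetilde f_2 $. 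The estimate \eqref{e.sharp<eta} is then a direct consequence of an $ L^2(\Omega^1\times\Omega^2;L^2(\mathbf R^n)) $ bound on the part of $ \widetilde f_j $ supported on bad cubes: orthogonality of martingale differences reduces $ \lVert\widetilde f_j-f_j\rVert_2^2 $ to a sum of $ \lVert\Delta_Q\widetilde f_j\rVert_2^2 $ over bad $ Q $, and averaging over the grids each such term is weighted by the small badness probability; since $ \sum_Q\lVert\Delta_Q\widetilde f_j\rVert_2^2=\lVert\widetilde f_j\rVert_2^2=\lvert Q^0\rvert $, choosing the threshold large enough gives the bound by $ \upsilon_0\lvert Q^0\rvert^{1/2} $. (On the exceptional event where averaging fails one simply uses the trivial bound, absorbed into the same parameter.)

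Next I would carry out the standard stopping–cube selection, producing sparse Carleson collections $ \widetilde{\mathcal S}^j\subset\mathcal D^j $ together with local accretive test functions $ b^j_S $, $ S\in\widetilde{\mathcal S}^j $, using Definition \ref{d.system} and the testing hypotheses; the stopping condition controls $ \langle|f_j|\rangle $ and the averages of $ Tb^j_S $ on the stopping children, as in \cite{NTV1}. Then I would perform the two remaining corona steps: replace $ b^1_S $ by $ \beta^1_S $, obtained by projecting $ b^1_S $ away from the bad cubes whose $ \widetilde{\mathcal S}^1 $–parent is $ S $, so that twisted martingale differences of $ f_1 $ over bad cubes vanish; and then, since the tree $ \widetilde{\mathcal S}^1 $ is no longer well adapted to $ \beta^1_S $, truncate $ \widetilde{\mathcal S}^1 $ inside a set $ B^1 $ that is small on average, again absorbing the defect. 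Each of these modifications perturbs $ f_j $ and the test functions only by an amount small in every $ L^q $ on average (because the modifications live on bad cubes / on $ B^1 $), so they feed into the error terms of \eqref{e.sharp<eta} and into the $ \upsilon_0\mathbf T $ term of \eqref{e.sharpIP} after one more absorption using the a priori finiteness of $ \mathbf T $.

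Finally, for \eqref{e.sharpIP} I would expand $ \langle Tf_1,f_2\rangle $ in the twisted martingale decompositions associated to $ \beta^1_S $ and $ b^2_S $, obtaining a double sum over good cubes $ P\in\mathcal D^1 $, $ Q\in\mathcal D^2 $, and split by symmetry to $ \ell P\ge\ell Q $. The sum decomposes into the \emph{inside} term ($ Q $ deep in $ P $, containing the paraproduct), handled by the twisted martingale transform inequalities of \S\ref{s.inequalities} together with the testing hypothesis $ \int_Q|Tb^1_Q|^{p_2'}\le\mathbf T_{\textup{loc}}^{p_2'}|Q| $ and Hölder with $ 1/p_1+1/p_2\le 1 $; the \emph{diagonal} term ($ P,Q $ comparable in size and position), the hardest one, handled by the perturbation inequalities for twisted martingale transforms (where the fact that $ \beta^1_S $ is a small perturbation of $ b^1_S $ is essential); the \emph{nearby} term ($ Q\subset 3P\setminus P $, much smaller, but far from $ \partial P $ by goodness), handled via the kernel smoothness \eqref{e.smoothness} and the universal half–twisted inequalities; and the \emph{far} term, handled by the size estimate \eqref{e.size}, off–diagonal decay, and universal martingale transform inequalities. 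Summing the four contributions produces the bound $ C(1+\mathbf T_{\textup{loc}})\lvert Q^0\rvert $, while all the error terms accumulated through the corona steps produce the $ \upsilon_0\mathbf T\lvert Q^0\rvert $ term. I expect the main obstacle to be the bookkeeping of the corona construction itself — specifically, guaranteeing that passing from $ b^1_S $ to $ \beta^1_S $ and then truncating the stopping tree genuinely makes the twisted martingale differences over bad cubes vanish while keeping all perturbations uniformly small on average in $ L^q $; this is exactly why \S\ref{s.corona} occupies half the paper, and all the $ L^q $-universality of the half–twisted inequalities in \S\ref{s.inequalities} is there to make this step go through.
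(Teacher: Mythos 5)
Your outline captures the paper's architecture in broad strokes, but it is confused on the one point that is the paper's central technical contribution: how $f_j$ is defined and why \eqref{e.sharp<eta} holds. In the paper, $f_j$ is defined \emph{before} any stopping tree is selected, by retaining the \emph{standard} (untwisted) martingale differences $D_Q\widetilde f_j$ over the good cubes $Q\in\mathcal A^j$. It is the $L^2$-orthogonality of these standard differences that, together with Proposition \ref{p.bad}, gives $\mathbb E\lVert\widetilde f_j-f_j\rVert_2^2\lesssim 2^{-c\epsilon r}\lvert Q^0\rvert$, hence \eqref{e.sharp<eta}. Your proposal hedges between ``martingale/twisted-martingale'' and attaches the definition of $f_j$ to the stopping tree $\widetilde{\mathcal S}^1$; if you literally took the twisted decomposition, the orthogonality you invoke would fail (twisted differences $\Delta_Q$ are not orthogonal), and $f_j$ would acquire a dependence on the $\beta^j_S$ and thus on \emph{both} grids, which undercuts the averaging. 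This is precisely the pitfall flagged by Hyt\"onen--Martikainen and recalled in Remark \ref{r.t+h}. The paper sidesteps it by first fixing $f_j$ via the untwisted good-cube projection, and only \emph{afterwards} perturbing the test functions to $\beta^j_S$ and truncating the stopping tree so that the twisted differences of this already-fixed $f_j$ over bad cubes vanish identically (Proposition \ref{p.D=0}), which yields the good-cube representation of Lemma \ref{l.representation}.

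Relatedly, your claim that the test-function perturbations and the truncation ``feed into the error terms of \eqref{e.sharp<eta}'' is wrong: these modifications do not change $f_j$ at all, only its twisted-difference \emph{representation}, so they contribute solely to the $\upsilon_0\mathbf T$ error in \eqref{e.sharpIP}, via Lemma \ref{l.GB}. Finally, the parenthetical ``on the exceptional event where averaging fails one simply uses the trivial bound'' does not close the random selection: the trivial bound on $\lVert\widetilde f_j-f_j\rVert_2$ is $\sim\lvert Q^0\rvert^{1/2}$ and cannot be absorbed. What one actually does is apply Markov's inequality to the expectation bounds \eqref{e.f_diff_small} and \eqref{e.goodSum}, observe that \eqref{e.GS<} is uniform in $(\omega^1,\omega^2)$, and select a realization in the positive-probability event on which both random quantities are within a bounded factor of their expectations. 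None of these is unfixable, but they sit exactly at the place the argument is most delicate, so they should be tightened before the plan can be called a proof.
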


This lemma and an absorption argument complete the proof of Theorem \ref{t.main1}. 
The construction of the corona
is rather complicated. 
It will be highly dependent upon certain random constructions, and there will be several absorption parameters that lead to the constant
$ \upsilon_0 $.  
The main advantage of our corona construction is that it allows
us to restrict to the good cubes in a natural manner; this and other useful features admit a straightforward proof of
 inequality \eqref{e.sharpIP}.

\subsection{Random Grids} 
We make use of so-called random grids,  due to Nazarov--Treil--Volberg \cite{MR1909219}.
These turned out to be of fundamental importance,
see \cite{MR1756958,1201.4319,0911.4387,MR2912709} for examples. 

We will have a random grid $\mathcal D ^{1}$ for the functions
 $\widetilde f_1,f_1$ and a 
random grid $ \mathcal D ^{2} $ 
for the functions $\widetilde f_2,f_2$. 
These random grids are constructed as follows. 
Let $ \mathcal D^0$ be the standard dyadic grid in $ \mathbf R ^{n}$.  
For  a fixed cube $\widehat{Q}\in\mathcal{D}^0$, let us consider 
the translated cube
\begin{equation*}
	Q:=\widehat{Q} \dot+ \omega ^{1} := \widehat{Q}+\sum_{j\,:\,2^{-j}< \ell Q } 2^{-j}\omega_j ^{1}\,,
\end{equation*}
which is a function of $ \omega ^{1} \in \Omega ^{1}  :=(\{0,1\}^n )^{\mathbf{Z} }$.
Denote  $\mathcal{D}^1=\{\widehat{Q} \dot+ \omega ^{1}\,:\,\widehat{Q} \in\mathcal{D}^0\}$.
The natural uniform probability measure  $\mathbb{P}^1$  is placed upon $ \Omega ^{1} $. That is,
each component   $\omega_j^1$, $j\in\mathbb{Z}$, has an equal probability $2^{-n}$ of taking any of the $2^n$ values, and all the components are independent of each other. The expectation
 with respect to $\mathbb{P}^1$ is denoted by $\mathbb{E}^1$. 
Define $ \Omega ^{2}$ in the same manner, with an independent copy of $ \Omega ^{1} $.  
It will be  important to distinguish between these two copies, so we  write $ \omega ^{j} \in \Omega ^{j}$ 
for the elements of the probability space that define $ \mathcal D ^{j}$.
 The product  $\mathbb{P}^1\otimes \mathbb{P}^2$
is denoted by $\mathbb{P}$, and the corresponding expectation 
$\mathbb{E}^1\mathbb{E}^2$ is denoted by $\mathbb{E}$.


We need notation.
Define the familiar \cite{MR1909219,1011.0642,MR2912709} and convenient number
\begin{equation} \label{e.epsilon}
 \epsilon :=  \frac{\eta}{2(\eta + n)}\,.
\end{equation} 
Throughout  $ r\ge 3/ \epsilon$ should be thought of  as a large integer, 
which satisfies condition (3) below,
 and whose exact value is assigned later.  
We say that a cube $ Q\in\mathcal{D}^1$ is 
\emph{bad}, if there is $P\in\mathcal{D}^2$ such that
$\ell(P)\ge 2^r \ell(Q)$ and 
$ 	 \textup{dist} (Q, \partial P) \le (\ell Q) ^{\epsilon } (\ell P) ^{1- \epsilon}$.
Otherwise, $Q$ is \emph{good}.
The definitions for  $Q\in \mathcal D ^{2}$ are similar.
The following properties are well-known for a cube $Q\in\mathcal{D}^1$:
\begin{enumerate}
\item  The goodness/badness
of  $ Q $ 
is a random variable on $ \Omega ^{2}$;
\item The probability
$ \pi _{\textup{good}} := \mathbb P^2  (\textup{$Q$ is good})$ is independent of  $ Q$;
\item  $  \pi _{\textup{bad}}:=1- \pi _{\textup{good}}  \lesssim 2 ^{- \epsilon r}$, provided $ \epsilon r$ is sufficiently large.
\end{enumerate}
 Define the good and bad projections by $ I= P ^{j} _{\textup{good}}+ P ^{j} _{\textup{bad}}$, where 
 \begin{equation*}
 	 P ^{j} _{\textup{good}} \phi := \sum_{Q \in \mathcal D ^{j}\;:\;  \textup{$ Q$ is good}} D _{Q} \phi \,, \qquad j=1,2\,.
\end{equation*}
Here $ D _Q \phi=\sum_{Q'\in\textup{ch}(Q)}
\{ \langle \phi\rangle_{Q'} - \langle \phi\rangle_Q\} \mathbf{1}_{Q'}$ is the usual martingale difference
associated with $Q$. 

We have the following proposition on the  bad projections;
The constant $ 0<c_q<1$ that appears in the exponent on the right will 
be a function of $ p_1$ and $ p_2$. In the sequel, we suppress this dependence in notation, writing 
only $ 2 ^{- c \epsilon r}$.

\begin{proposition}\label{p.bad} If $ 1< q < \infty $  and $\{j,k\}=\{1,2\}$,  then there is a constant $ c_q >0 $ so that 
	\begin{equation}\label{e.bad}
		 \mathbb{E}^k \lVert  P ^{j} _{\textup{bad}} \phi \rVert_{q}^q \lesssim 2 ^{- c_q \epsilon r} \lVert \phi \rVert_{q}^q\,.
\end{equation}
Here $\omega^j\in\Omega^j$ is fixed, and $\phi\in L^q$ is any function
that is independent of sequences $\omega^k\in \Omega^k$.
\end{proposition}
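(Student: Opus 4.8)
The plan is to estimate the bad projection by splitting $P^j_{\textup{bad}}\phi$ according to the scale of the bad cube and then exploiting the independence of the two probability spaces together with an interpolation/duality argument to produce the gain $2^{-c_q\epsilon r}$. First I would recall that $P^j_{\textup{bad}}\phi = \sum_{Q\in\mathcal{D}^j:\,Q\text{ bad}} D_Q\phi$, where the martingale differences $D_Q\phi$ form an orthogonal (in $L^2$) and, more generally, unconditional system in every $L^q$, $1<q<\infty$, with Littlewood--Paley square-function control $\lVert\sum_Q D_Q\phi\rVert_q \sim \lVert(\sum_Q |D_Q\phi|^2)^{1/2}\rVert_q$. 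The key point is that whether $Q\in\mathcal{D}^j$ is bad is a random variable on $\Omega^k$ only (property (1)), while the martingale differences $D_Q\phi$ depend only on $\omega^j$ (and $\phi$ is assumed independent of $\omega^k$); hence, conditioning on $\omega^j$, we may bring the $\mathbb{E}^k$ inside and the indicator $\mathbf{1}_{\{Q\text{ bad}\}}$ becomes a deterministic coefficient after averaging, of size $\pi_{\textup{bad}}\lesssim 2^{-\epsilon r}$ by property (3), uniformly in $Q$.

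The cleanest route is to handle $q=2$ first, where everything is an honest orthogonality computation: by Fubini and the independence structure,
\begin{equation*}
\mathbb{E}^k\lVert P^j_{\textup{bad}}\phi\rVert_2^2 = \sum_{Q\in\mathcal{D}^j}\mathbb{P}^k(Q\text{ bad})\,\lVert D_Q\phi\rVert_2^2 = \pi_{\textup{bad}}\sum_{Q\in\mathcal{D}^j}\lVert D_Q\phi\rVert_2^2 \le \pi_{\textup{bad}}\lVert\phi\rVert_2^2 \lesssim 2^{-\epsilon r}\lVert\phi\rVert_2^2\,,
\end{equation*}
which gives \eqref{e.bad} with $c_2=1$. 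For general $1<q<\infty$ one interpolates: the trivial bound $\mathbb{E}^k\lVert P^j_{\textup{bad}}\phi\rVert_q^q\lesssim\lVert\phi\rVert_q^q$ (boundedness of the good/bad projections on $L^q$, pointwise in $\omega^k$, which is just the unconditionality of martingale differences) combined with the $L^2$ gain above yields, by the Riesz--Thorin or Marcinkiewicz interpolation applied to the sublinear map $\phi\mapsto(\mathbb{E}^k\lVert P^j_{\textup{bad}}\phi\rVert_\bullet^\bullet)^{1/\bullet}$ after suitable randomization, a bound $2^{-c_q\epsilon r}\lVert\phi\rVert_q^q$ for an exponent $c_q>0$ that degenerates as $q\to 1$ or $q\to\infty$ but is strictly positive in the open range; tracking the interpolation exponent shows $c_q\sim\min(q-1,1)/\max(q,q')$ or similar, which is all that is needed.

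An alternative, perhaps more self-contained, is to avoid interpolation and argue directly with the square function: pointwise in $\omega^j$, write $\lVert P^j_{\textup{bad}}\phi\rVert_q^q\sim\int\bigl(\sum_{Q\text{ bad}}|D_Q\phi|^2\bigr)^{q/2}$, take $\mathbb{E}^k$, and for $q\ge 2$ use that $t\mapsto t^{q/2}$ is convex together with a randomization (Khintchine) to reduce matters to estimating $\mathbb{E}^k\sum_{Q\text{ bad}}|D_Q\phi|^2$, which as above is $\pi_{\textup{bad}}\sum_Q|D_Q\phi|^2$; a further application of Hölder in the remaining exponent recovers the full $\lVert\phi\rVert_q^q$ with a power of $\pi_{\textup{bad}}$ to spare. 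The range $1<q<2$ is then obtained by duality from the case of exponent $q'$ applied to the adjoint projection (which has the same structure). I expect the main obstacle to be purely bookkeeping: making precise the reduction ``fix $\omega^j$, average in $\omega^k$, so the badness indicators become the constant $\pi_{\textup{bad}}$'' requires invoking the independence of $\mathcal{D}^1$ and $\mathcal{D}^2$ and the hypothesis that $\phi$ does not depend on $\omega^k$ at exactly the right moment (Fubini/Tonelli to interchange $\mathbb{E}^k$ with the spatial integral and with the sum over $Q$), and then tracking how the exponent $c_q$ emerges from interpolation — but no genuinely hard analysis is involved, since properties (1)--(3) of bad cubes do all the probabilistic work.
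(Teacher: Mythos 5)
Your primary argument — compute the $L^2$ gain $\pi_{\textup{bad}}\lesssim 2^{-\epsilon r}$ from independence, pair it with the trivial $L^p$ boundedness of $P^j_{\textup{bad}}$ uniform in $\omega^k$ (a martingale transform), and interpolate — is exactly the paper's proof, and it is correct. The paper states the interpolation more cleanly by viewing $P^j_{\textup{bad}}$ as a \emph{linear} operator $L^q(dx)\to L^q(\mathbb{P}^k\otimes dx)$ and invoking Marcinkiewicz interpolation directly, which sidesteps your somewhat awkward ``sublinear map after randomization'' framing; otherwise the two arguments coincide, and your alternative square-function/duality route is unnecessary.
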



\begin{proof} 
The basic idea is to apply the
Marcinkiewicz interpolation theorem to the linear
	operator $P ^{j} _{\textup{bad}}:L^q(dx)\to L^q(\mathbb{P}^k\otimes dx)$.
	The projection to bad cubes is a martingale transform 
	\cite{MR1108183}, hence the following inequality with no decay holds, 
	\begin{equation*}
		\mathbb{E}^k \lVert  P ^{j} _{\textup{bad}} \phi \rVert_{p} ^{p} 
		\le \sup\, \{ \lVert  P ^{j} _{\textup{bad}} \phi \rVert_{p} ^{p}\,:\omega^k\in\Omega^k \}
		\lesssim  \lVert \phi \rVert_{p} ^{p} \,,\quad 1<p<\infty\,.
\end{equation*}
	Thus,
	it suffices to verify the claimed decay for $ q=2$. To this end, by independence,
	\begin{align*}
		\mathbb{E}^k \lVert  P ^{j} _{\textup{bad}} \phi \rVert_2 ^2 
				= \mathbb{E}^k \sum_{ \substack{Q \in \mathcal D ^{j} \\ \textup{$ Q$ is bad}}}  \lVert D_Q \phi \rVert_{2} ^2 
	 =  \pi _{\textup{bad}} 
		\sum_{Q \in \mathcal D ^{j}}  \lVert D_Q \phi \rVert_{2} ^2 
		=\pi _{\textup{bad}}		   \lVert \phi \rVert_{2} ^2\,.
\end{align*}
 Indeed, both $\mathcal{D}^j$ and $\lVert D_Q \phi \rVert_{2} ^2$ for $Q\in\mathcal{D}^j$
are independent of $\omega^k\in \Omega^k$, and the badness of $Q\in\mathcal{D}^j$ is 
a random variable on $\Omega^k$, $\{j,k\}=\{1,2\}$.
\end{proof}

 \subsection{Selection of $ f_j$} 
We will prove Lemma~\ref{l.sharp} by averaging over random grids. 
Fix $ j\in \{1,2\}$.
Let $ \mathcal A ^{j} _{\ast }$ denote all (at most $ 2 ^{n}$) cubes $ Q\in \mathcal D ^{j}$ such that 
 $ Q\cap Q ^{0} \neq \emptyset $ and $ \ell Q ^{0} \le \ell Q < 2 \ell Q ^{0}$.  
 Let $ \mathcal A ^{j}$ be all cubes  in $\mathcal{D}^j$  that are contained in some $ Q\in \mathcal A ^{j} _{\ast }$.  
Recall that the function $ \widetilde f _j $ is
chosen in connection with \eqref{e.witness}, and it
is equal to $ \mathbf 1_{Q ^{0}}$ in absolute value.  
 We define an approximate $f_j$ of this function to be
  \begin{equation*}
  	  f _{j} := \sum_{Q\in \mathcal A ^{j} _{\ast }} \langle \widetilde f _{j} \rangle_Q \mathbf 1_{Q} 
  	  + \sum_{\substack{Q \in \mathcal A ^{j}\\ \textup{$ Q$ is good}}} D_Q \widetilde f_j \,. 
\end{equation*}
In the view of Proposition~\ref{p.bad}, we have
\begin{equation}\label{e.f_diff_small}
\mathbb{E} \lVert \widetilde{f}_j - f_j\rVert_2^2 \lesssim
2^{-c\epsilon r} \lvert Q^0\rvert\,.
\end{equation}
Hence, it suffices to estimate 
 $ \mathbb E  \lvert\langle T f_1, f_2 \rangle\rvert $.

The functions $ f_j$  lie in $ BMO$:---a dyadic variant associated with the  grid $\mathcal{D}^j$.  It follows from 
 the associated John--Nirenberg inequality that 
 \begin{equation}\label{e.f_bmo}
 	 \lVert f_j\rVert_{q}  \lesssim \lvert  Q ^{0}\rvert ^{1/q}\,, \qquad 1< q < \infty \,,
\end{equation}
 with the implied constant independent of  sequences $\omega^1$ and $\omega^2$. 
The fact that the functions $f_j$ can nevertheless be unbounded creates a minor set of difficulties for us.  
 
 \subsection{The Setup for Stopping Cubes Construction}

 In order to accommodate the reduction to good cubes, we will need a significant modification of the 
 usual selection process of stopping trees and local $ b$ functions.  The following definition
 will help 
 explain the end result that we are after; it is convenient
 to denote $T^1=T$ and $T^2=T^\ast$.

\begin{definition}\label{d.stoppingData}  
	Fix constants $ 0< \tau,\delta < 1$, and let $\{j,k\}=\{1,2\}$.
	A collection of integrable functions
	$   \{\beta ^{j} _S \;:\; S\in \mathcal S ^{j}\subset \mathcal D^j\} $
	 is a \emph{stopping data}  (a \emph{perturbed stopping data})
	 for 
	  a collection $ \mathcal G ^{j} \subset \mathcal D ^{j}$ of cubes if the following 
	conditions hold with $\mathbf{A}_j=1/2$, $\mathbf{B}_j=\delta ^{-1}  \mathbf A^{p_j}$, 
	and $\mathbf{C}_j=\delta ^{-1} \mathbf T _{\textup{loc} } ^{p_k'}$
	(in the case of perturbed stopping data: $\mathbf{A}_j=1/4$, $\mathbf{B}_j\lesssim \delta ^{-1}  \mathbf A^{p_j}$, 
	and $\mathbf{C}_j\lesssim \delta ^{-1} \mathbf T _{\textup{loc} } ^{p_k'}+\upsilon_1^{p_k'}\mathbf{T}^{p_k'}$ for some constant $0<\upsilon_1<1$):
\begin{enumerate}
\item Every $ Q\in \mathcal G ^{j}$ is contained in some $ S\in \mathcal S ^{j}$. The same 
holds for every 
	child  $ Q'\in \textup{ch} (Q)$, whose parent $ \pi _{\mathcal S ^{j}} Q' $ need not equal $ \pi _{\mathcal S ^{j}} Q$, 
	even if $ Q$ is a minimal cube in $ \mathcal G ^{j}$.  
\item If $ Q\in \mathcal G ^{j}$  with  $ \pi _{\mathcal{S}^j} Q=S$
 (or $Q\in \textup{ch}(R)$ with $R\in\mathcal{G}^j$ and
$\pi_{\mathcal{S}^j}Q=S$),  then (a)---(c);
\begin{enumerate}
\item  $\langle  \beta ^j _{S} \rangle_Q \ge \mathbf{A}_j   $\,;  \qquad (Don't divide by zero)
	
\item $  \langle    \lvert M \beta ^j _{S}\rvert ^{p_j}  \rangle_Q \le \mathbf{B}_j $\,;  \qquad (Local norm of $ M\beta ^j_S$ controlled)
	
\item $  \langle  \lvert T^j \beta ^j _{S}\rvert ^{  p_k'}  \rangle_Q \le \mathbf{C}_j  $\,;
	\qquad (Local norm of $ T^j \beta ^j _{S}$ is controlled)
\end{enumerate}
\item $ \sum_{ S' \in  \textup{ch}_{\mathcal{S}^j}(S)  } \lvert  S'\rvert \le  \tau  \lvert  S\rvert $ for all $ S\in \mathcal S^j$, i.e., $\mathcal{S}^j$ is a sparse collection of cubes.
\end{enumerate}
For $ Q\in \mathcal G ^{j}$ and $\phi\in L^1_{\textup{loc}}$, we define a \emph{twisted martingale difference} by
\begin{equation} \label{e.twist}
	\Delta_Q  ^{\beta^j} \phi := 
	 \sum_{Q'\in \textup{ch} (Q)} 
	\biggl\{
	\frac {\langle \phi \rangle _{Q'}} {\langle \beta^j_{\pi_{\mathcal{S}^j} Q'} \rangle _{Q'} } \beta_{ \pi _{\mathcal S^j}Q'} ^{j}
	-
	\frac {\langle \phi \rangle _{Q}} {\langle \beta^j_{\pi _{\mathcal S^j} Q} \rangle_Q}  \beta_{\pi_{\mathcal{S}^j} Q} ^{j}
	\biggr\}  \mathbf 1_{Q'} \,.
\end{equation}
This is well defined, as $Q$ has an $ \mathcal S ^{j}$ parent,
 and there is no division by zero; see conditions (1) and (2a). 
We also define a \emph{half-twisted martingale difference} by 
\begin{equation}\label{e.halfDef}
	\widetilde D_Q  ^{\beta^j} \phi := 
	\biggl\{\sum_{\substack{Q'\in \textup{ch} (Q)\\ \pi _{\mathcal S ^{j}}Q= \pi _{\mathcal S ^{j}} Q' }} 
	\frac {\langle \phi \rangle _{Q'}} {\langle \beta^j_{\pi_{\mathcal{S}^j} Q'} \rangle _{Q'} }  \mathbf 1_{Q'}
	\biggr\} 
	-
	\frac {\langle \phi \rangle _{Q}} {\langle \beta^j_{\pi _{\mathcal S^j} Q} \rangle_Q} 
	\mathbf 1_{Q} \,. 
\end{equation}
Observe that here we do not multiply by a $ \beta^j$ function, and the 
sum 
over the children 
excludes those with a different $ \mathcal S^j$ parent (in particular, there is no change
in the $ \beta^j$ function: $\pi_{\mathcal{S}^j} Q' =\pi_{\mathcal{S}^j} Q$).  
\end{definition}

The following Lemma provides the reduction to good cubes. In particular, it helps us to
eliminate the martingale  differences that are associated with bad cubes, 

\begin{lemma}\label{l.GB} Suppose $ \Lambda > 1$
and $0<\upsilon_1<4^{-1-n}$.
Fix $j\in \{1,2\}$.
There is a collection $ \mathcal G ^{j} \subset \mathcal D ^{j}$  of cubes, 
	and  a perturbed  stopping data $\{ \beta ^{j}_S \;:\; S\in \mathcal S ^{j} \}$ for $ \mathcal G ^{j}$, so that conditions (1)---(4) hold:
\begin{enumerate}
\item  Every cube  $ Q\in \mathcal G ^{j}$ is good;
\item  For all $ Q \in \mathcal G ^{j}$, we have $\langle |f_j|\rangle_Q \le \Lambda$;
\item Suppose $ Q \in \mathcal G ^{j}$ with a child $ Q'$, and $ S\in \mathcal S  ^{j}$ with  $ \pi _{\mathcal S ^{j}} Q \subset S$. Define a constant  $ \lambda _{Q'} $ by
	\begin{equation} \label{e.BNDD}
		\lambda_{Q'}\mathbf{1}_{Q'} :=\mathbf 1_{Q'} \sum_{\substack{ P \in \mathcal G ^{j} \;:\; P\supset Q\\  \pi _{\mathcal S ^{j}}P= S} } \widetilde D _P^{\beta^j} f_j \,. 
\end{equation}
Then,  we have $ \lvert  \lambda_{Q'} \rvert \lesssim \Lambda  $.  
\item   Assuming $\Lambda^{-1}+\Lambda\upsilon_1^{-1}\cdot 2^{-c\epsilon r}<1$, 
 there holds  
		\begin{equation} \label{e.goodSum}
		\begin{split}
			&\mathbb E 	\Bigl\lvert \langle T f_1, f_2\rangle - \sum_{P\in \mathcal G ^{1} }	\sum_{Q\in \mathcal G ^{2} } 
			\langle  T  \Delta _P ^{\beta^1 } f_1,  \Delta _Q ^{\beta^2 }f_2\rangle \Bigr\rvert 
			\\&\qquad\qquad\le C_1\bigl\{
			1+ \mathbf T _{\textup{loc}}    
			+ ( \upsilon _1+ \Lambda ^{-1}  + 
			\Lambda \upsilon _1^{-1}\cdot   2 ^{- c\epsilon r}) \mathbf T  \bigr\}  \lvert Q ^{0}\rvert .
			 \end{split}
\end{equation}
Here 
$ C_1 = C_1 (p_1,p_2,n, \mathbf A)$ does not depend upon 
the absorption parameters $\upsilon_1$, $\Lambda$, $ r$.  
\end{enumerate}
\end{lemma}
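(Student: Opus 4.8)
The plan is to build the collection $\mathcal G^j$ and the perturbed stopping data $\{\beta^j_S\}$ in three stages, exactly mirroring the three-step corona description in the introduction, and then to verify \eqref{e.goodSum} by a telescoping-plus-absorption argument. First I would run the standard stopping-cube selection on $f_j$: starting from the cubes in $\mathcal A^j_\ast$, declare $S'\subset S$ a stopping cube if it is the maximal subcube with $\langle|f_j|\rangle_{S'}>\Lambda\langle|f_j|\rangle_S$, or with $\langle|M b^j_{S}|^{p_j}\rangle_{S'}$ or $\langle|T^j b^j_S|^{p_k'}\rangle_{S'}$ exceeding the thresholds $\delta^{-1}\mathbf A^{p_j}$, $\delta^{-1}\mathbf T_{\mathrm{loc}}^{p_k'}$; a Carleson/John--Nirenberg argument using \eqref{e.f_bmo}, Definition~\ref{d.system}(2) and the testing hypothesis gives sparseness, i.e. condition (3) of Definition~\ref{d.stoppingData}, provided $\delta$ is chosen small. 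This yields the (unperturbed) tree $\widetilde{\mathcal S}^j$ with local functions $b^j_S$. Then I would perturb: set $\beta^j_S := b^j_S - P^j_{\mathrm{bad},S} b^j_S$, where $P^j_{\mathrm{bad},S}$ projects onto those bad cubes whose $\widetilde{\mathcal S}^j$-parent is $S$; Proposition~\ref{p.bad} shows $\|\beta^j_S - b^j_S\|_{p_j}\le 2^{-c\epsilon r}|S|^{1/p_j}$ on average in $\Omega^k$, so for $r$ large the accretivity $\langle\beta^j_S\rangle_Q\ge 1/4$ and the size bounds $\mathbf B_j$, $\mathbf C_j$ of the \emph{perturbed} data survive on those $Q$ that are not themselves inside the small bad-exceptional set; the extra $\upsilon_1^{p_k'}\mathbf T^{p_k'}$ in $\mathbf C_j$ comes from $T^j$ applied to the perturbation, estimated by $\mathbf T\cdot\|P^j_{\mathrm{bad},S}b^j_S\|_2$. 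Finally I would truncate the tree inside the exceptional set $B^j$ where either accretivity or a local bound fails, obtaining $\mathcal S^j\subset\widetilde{\mathcal S}^j$ and setting $\mathcal G^j$ to be the good cubes inside the truncated tree and avoiding $B^j$; this gives conditions (1) and (2) immediately.

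For condition (3), the constant $\lambda_{Q'}$ telescopes: $\sum_{P\in\mathcal G^j:P\supset Q,\ \pi_{\mathcal S^j}P=S}\widetilde D_P^{\beta^j}f_j$ is (up to endpoint terms where $P$ exits $\mathcal G^j$) a difference $\langle f_j\rangle_{Q}/\langle\beta^j_S\rangle_{Q} - \langle f_j\rangle_{S'}/\langle\beta^j_S\rangle_{S'}$ of averages of $f_j$ over cubes inside the single stopping cube $S$, hence bounded by $C\Lambda$ using condition (2) together with $\langle\beta^j_S\rangle\ge 1/4$; one checks the half-twisted differences genuinely telescope because the $\mathcal S^j$-parent is frozen at $S$ along the chain, so the $\beta^j$ factor does not change and $\widetilde D_P^{\beta^j}$ really is a plain martingale difference of $f_j/\langle\beta^j_S\rangle_{(\cdot)}$.

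The heart of the matter is condition (4), the decomposition of $\langle Tf_1,f_2\rangle$ into the double twisted-martingale sum plus an error controlled by the right side of \eqref{e.goodSum}. I would write $f_j = \sum_{S\in\mathcal S^j}\big(\text{stopping value}\cdot\beta^j_S\big) + \sum_{Q\in\mathcal G^j}\Delta_Q^{\beta^j}f_j + (\text{bad part})$, the last piece being the martingale differences over cubes not in $\mathcal G^j$: those killed by truncation contribute through $B^j$ (a set small on average, handled by Hölder and \eqref{e.f_bmo}, giving the $\Lambda^{-1}$ and $\Lambda\upsilon_1^{-1}2^{-c\epsilon r}$ terms after optimizing), and those killed by badness contribute through $P^j_{\mathrm{bad}}f_j$, bounded via Proposition~\ref{p.bad} by $2^{-c\epsilon r}\mathbf T|Q^0|$ — here it is crucial that, by the construction of $\beta^1_S$, the twisted difference of $f_1$ over a bad cube with parent $S$ vanishes, so no bad cube of $\mathcal D^1$ survives in $\mathcal G^1$ and symmetrically for $\mathcal D^2$. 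The leading stopping terms, when paired against $Tf_{k}$ or $T^*f_k$, produce $\langle T^j\beta^j_S,\cdot\rangle$ expressions which are estimated directly by condition (2c) of the perturbed stopping data together with sparseness, yielding the $C_1(1+\mathbf T_{\mathrm{loc}})|Q^0|$ contribution; the cross terms between a stopping part on one side and a martingale part on the other are the paraproduct-type pieces, reorganized so they are absorbed into the surviving double sum $\sum_{P,Q}\langle T\Delta_P^{\beta^1}f_1,\Delta_Q^{\beta^2}f_2\rangle$. The main obstacle I anticipate is bookkeeping the dependence of $\beta^j_S$ on both $\Omega^1$ and $\Omega^2$: one must ensure that every application of Proposition~\ref{p.bad} is made with the function fixed in the appropriate probability space (fix $\omega^j$, average in $\omega^k$), and that the truncation set $B^j$ is defined so that the resulting $\mathcal S^j$, $\mathcal G^j$ still satisfy the measurability needed for the expectations in \eqref{e.goodSum} to make sense — this is precisely the delicacy flagged after Definition~\ref{d.stoppingData}, and getting the order of "fix/average/truncate" right is where the real work lies.
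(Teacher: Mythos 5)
Your overall three-stage architecture (auxiliary tree $\widetilde{\mathcal S}^j$, perturbation of the $b^j_S$ functions, truncation) matches the paper, and the perturbation step $\beta^j_S = b^j_S - \widetilde\beta^j_S$ is exactly right. But the first stage has a gap that is fatal to the rest of the argument.

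You propose to include the stopping rule $\langle|f_j|\rangle_{S'}>\Lambda\langle|f_j|\rangle_S$ in the \emph{initial} tree $\widetilde{\mathcal S}^j$. The paper deliberately avoids this: there, $\widetilde{\mathcal S}^j$ is built solely from the accretive functions $b^j_S$ (accretivity $\langle b^j_S\rangle_Q\ge 1/2$, which your list omits, plus the $Mb^j_S$ and $T^jb^j_S$ bounds and \eqref{e.inf}), and hence depends on $\omega^j$ alone. This independence is what allows Proposition~\ref{p.bad} and Lemma~\ref{l.tildeB} to be applied with $\omega^j$ frozen and averaging over $\omega^k$; it is also what makes the estimate $\mathbb{E}^k|B^{j,A}|\lesssim\upsilon_1^{-p_j}2^{-c\epsilon r}|Q^0|$ legitimate, since $\widetilde{\mathcal{S}}^j$ and the perturbations $\widetilde\beta^j_S$ have a clean product-space measurability structure. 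But $f_j$ is already a function of \emph{both} $\omega^1$ and $\omega^2$ (it is $\widetilde f_j$ projected onto the good cubes of $\mathcal D^j$, and goodness is an $\Omega^k$-event). If you fold an $f_j$-condition into $\widetilde{\mathcal S}^j$, the tree and hence $\widetilde\beta^j_S$ depend on both $\omega$'s, the hypothesis of Proposition~\ref{p.bad} ($\phi$ independent of $\omega^k$) fails, and the measurability you flag at the end of your proposal as ``where the real work lies'' is exactly what your construction destroys. The paper's fix is to push the $f_j$-control entirely into the truncation (the Type~C cubes $\langle|f_j|\rangle_Q>\Lambda$), keeping $\widetilde{\mathcal S}^j$ free of $f_j$.

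A secondary, related problem: your multiplicative stopping condition $\langle|f_j|\rangle_{S'}>\Lambda\langle|f_j|\rangle_S$ only controls averages relative to the parent stopping cube; chaining down the tree you would get $\langle|f_j|\rangle_Q\lesssim\Lambda^k$ rather than the absolute bound $\langle|f_j|\rangle_Q\le\Lambda$ that condition (2) of the lemma demands. The paper's truncation rule is absolute, not multiplicative. Finally, your description of the truncation (``where either accretivity or a local bound fails'') does not capture the Type~B cubes --- bad cubes that have a child in $\widetilde{\mathcal S}^j$. Removing those is what guarantees that every bad cube $P$ surviving in $\mathcal R^j$ has no stopping children, so that Proposition~\ref{p.D=0} gives $\widetilde D_P^{\beta^j}f_j\equiv 0$ and the sum in \eqref{e.BNDD} genuinely telescopes: the gaps in the chain are not ``endpoint terms'' but can sit anywhere between $Q$ and $S$, and they vanish precisely because of the Type~B pruning combined with the perturbation by bad-cube martingale differences.
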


Before the lengthy  proof of this lemma, let us indicate its usage.


\subsection*{A conditional proof of Lemma~\ref{l.sharp}}
In order to complete the proof of Lemma~\ref{l.sharp}, 
it remains to verify Lemma \ref{l.GB} and the following inequality, 
\begin{equation}\label{e.GS<}
	\begin{split}
	\Bigl\lvert 
 \sum_{P\in \mathcal G ^{1} }	\sum_{Q\in \mathcal G ^{2} } 
			\langle  T  \Delta _P ^{\beta^1 } f_1,  \Delta _Q ^{\beta^2 }f_2\rangle 
			\Bigr\rvert &\le 
			 \bigl\{C_2
			\{ 1+ \mathbf T _{\textup{loc}}\}
			 + C_3   r\upsilon _1 \Lambda^2 \mathbf T  \bigr\} 
			 \lvert  Q ^{0}\rvert\,.
	\end{split}
\end{equation}
 We emphasize that  inequality \eqref{e.GS<}
is uniform in $\omega^1$ and $\omega^2$, and that it is
distinct from  \eqref{e.goodSum}. The 
 constant $ C_3=C_3 (p_1,p_2,n, \eta, \mathbf A)$, that is
 independent of absorption parameters,
 and the product $ r\upsilon_1\Lambda^2 $ of absorption parameters appear on the right.  
The constant
 \[C_2 = C_2 (p_1, p_2, n, \eta,\mathbf A, r, \Lambda , \upsilon _1)\]
is allowed to depend also upon the absorption parameters.
Returning 
to the proof of Lemma \ref{l.sharp}, let us
consider 
 inequalities \eqref{e.f_diff_small}, \eqref{e.goodSum}, and
 \eqref{e.GS<}.
 By taking $ \Lambda >1$ sufficiently large, and then 
 choosing $r$ large enough and assigning $\upsilon _1=r^{-2}$, the proof is complete---apart from Lemma \ref{l.GB} and inequality \eqref{e.GS<}.  \qed

\medskip

At this stage, let us make several clarifying remarks.

\begin{remark}\label{r.t+h}  Hyt\"onen and Martikainen \cite{1011.0642}*{Remark 4.1} have pointed to 
	serious concerns with some existing approaches to the reduction to good cubes in local $ Tb$ theorems. 
	The substance of the problem arises from the fact that the twisted martingale differences 
	depend upon the choice of grid, and the collection of local $ b$ functions, 
	making averaging arguments---such as the one used in the proof of Proposition~\ref{p.bad}---not transparently true. Our corona construction establishes a transparent reduction to 
	good cubes in \eqref{e.GS<}, and this
	is one of our main contributions.
\end{remark}
 
 \begin{remark}
 The proof of inequality \eqref{e.GS<}, taken up \S\ref{s.innerProduct}--\S\ref{s.remaining}, is now largely standard in nature, following the lines of \cite{NTV1,V} and including 
	innovations from \cite{1108.2319,1201.4319} to avoid auxiliary Carleson measure estimates.  However, certain perturbation
	inequalities are needed when treating cubes that are nearby, both in size
	and position. There are also advantages for us:
\begin{enumerate} 
\item We need only consider good cubes, which is the primary goal of the corona construction.  
	
\item
 By normalizing both $f_1$ and $f_2$ 
with a factor $\Lambda^{-1}$, 
the sums \eqref{e.BNDD} are bounded by $c\lesssim 1$, 
	which is related to the telescoping property needed in the control of paraproduct terms. 
This normalization is assumed
 in the beginning of \S\ref{s.inequalities}, and thereafter.
\end{enumerate}
\end{remark}

\begin{remark}\label{r.notracking} 

	The dependence of the quantitative estimates on the parameters 
	aside from $\mathbf{T}$ and $\mathbf{T}_\textup{loc}$
is 
not straight forward, and typically we do not  track it.  
However, we need to
track the dependence of a constant
$c$ on absorption parameters 
$r$, $\Lambda$, $\upsilon_1$,
if it appears
in an expression $c\cdot \mathbf{T}$.
\end{remark}

The rest of this section is taken up with the proof of Lemma \ref{l.GB}.  

\subsection{Auxiliary stopping data}
Fix $j\in \{1,2\}$.
We construct  auxiliary
\emph{stopping data} $ \{b ^{j} _{S} \;:\; S\in \widetilde {\mathcal S } ^{j}\}$ 
 for the collection $ \mathcal A ^{j}$, which was defined when selecting the function $ f _{j}$. 
 The  \emph{perturbed stopping
 data} in  will be later constructed by using this auxiliary stopping data.
 The following construction of $ \widetilde{ \mathcal S} ^{j}$ and $ \{b _{S}^j \;:\; S\in \widetilde{ \mathcal S} ^{j}\}$ is 
fairly standard, and it only depends upon $ \omega^{j}$. 

 Initialize $ \widetilde{ \mathcal S} ^{j}$ to be $\mathcal A ^{j} _{\ast }$. For each
 cube $S$ in this collection,  consider the function $ b ^{j}_S$ given to us by the local $ Tb$ hypothesis, see the formulation of Theorem \ref{t.main1}.
 Add to $ \widetilde{ \mathcal S} ^{j}$ the maximal dyadic descendants  $Q\subset  S$
 which either fail  
 any of the criteria  (a)---(c)  in Definition~\ref{d.stoppingData},
 with $\beta_S^j:=b_S^j$,
 or fail the condition
 \begin{equation}\label{e.inf}
 \inf_{x\in Q} M|b^j_S|^{p_j}(x) \le \delta^{-1} \mathbf{A}^{p_j}\,.
 \end{equation}
 Concerning these stopping conditions, 
 let  $E _S$  be the union of the maximal descendents $Q$ of 
 $ S$ such that  $ \langle  b ^{j}_S\rangle _{Q} < \tfrac 12$.
 We have, using the higher integrability of $ b ^{j}_S$, 
\begin{align*}
	\lvert  S\rvert = \int _{S} b ^{j}_S \;dx &= \int _{E_S} b ^j_S \; dx + \int _{S \setminus E_S} b ^{j}_S \; dx 
 \le \tfrac 12 \lvert S\rvert + \mathbf A  \lvert  S \setminus E_S\rvert ^{1/p_j'} \lvert  S\rvert ^{1/p_j}   \,. 
\end{align*} 
Hence, $ (2\mathbf A) ^{-p_j'} \lvert  S\rvert \le \lvert  S \setminus E_S\rvert  $.
Next, let us consider the union $F_S$ of the maximal descendants $Q$ of $S$, 
failing \eqref{e.inf} or  one of the mentioned criteria (b), (c). By inspection, we have
$\lvert F_S\rvert \lesssim \delta \lvert S\rvert$.
Therefore, with choice of $ \delta = \delta(p_j,n,{\mathbf A})$, we can 
continue the construction of $ \widetilde{\mathcal{S}}^{j}$ inductively
to meet conditions (1)---(2) and
the sparsness condition (3) in Definition~\ref{d.stoppingData} with $\tau=\tau(p_j,n,{\mathbf A})$.

 Below, we will refer to $ \widetilde {\mathcal S} ^{j}$, and its subsets, as collections of \emph{stopping cubes}.

 \subsection{Perturbation of the $ b$ functions}
 
In a departure from standard arguments, we modify the functions $ b ^{j} _{S}$,  $S\in\widetilde{\mathcal S}^j$, that are  already selected.
For $S\in\widetilde{\mathcal S}^j$, we define 
\begin{align}\label{e.beta}
	\beta ^j _{S}  := b ^j _S -  \widetilde \beta ^j_S \,, \quad \textup{where} \quad 
	\widetilde \beta ^j_S  := \sum_{\substack{Q\in\mathcal{A}^j \;:\; \pi _{\widetilde{\mathcal S} ^{j}}Q=S\\ \textup{$ Q$ is bad}}}  D _Q b ^j_S\,.
	\end{align}
We notice that the sum defining $  \widetilde \beta ^j_S $ is formed by using
the classical martingale differences that are associated with bad cubes  in $\mathcal{A}^j$
which have the same stopping parent.  
A particular care must be taken with these perturbations $ \beta ^{j} _S$, as they are now functions of both $ \omega ^{1}$ and $ \omega ^{2}$. 

Nevertheless,  $ \widetilde \beta ^{j}_S$ is a small function on average.

\begin{lemma}\label{l.tildeB} For $\{j,k\}=\{1,2\}$ and all $S\in\widetilde{\mathcal{S}}^j$,
  there holds 
	\begin{align}\label{e.tildeB-BMO}
		\lVert\widetilde \beta ^{j}_S \rVert_{BMO} & \lesssim 1\,,
		\\
 	\mathbb{E}^k \lVert\widetilde \beta ^{j}_S \rVert_{q}^q & \lesssim 2 ^{- c \epsilon r} \lvert  S\rvert \,, \qquad 1< q < \infty \,.  
\end{align}
\end{lemma}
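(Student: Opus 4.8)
The plan is to prove the two estimates for $\widetilde\beta^j_S=\sum_{Q}D_Qb^j_S$, where the sum runs over bad cubes $Q\in\mathcal A^j$ with $\pi_{\widetilde{\mathcal S}^j}Q=S$, by exploiting that $\widetilde\beta^j_S$ is exactly the bad projection (in the variable $\omega^k$) of a piece of $b^j_S$. First I would establish the $BMO$ bound \eqref{e.tildeB-BMO}. The key observation is that the martingale differences $D_Qb^j_S$ appearing in $\widetilde\beta^j_S$ are a subcollection of those defining $P^j_{\mathrm{bad}}(b^j_S\mathbf 1_S)$, or rather of $b^j_S$ itself restricted to descendants of $S$ in $\mathcal A^j$; since $b^j_S$ is bounded in $L^{p_j}$ with $\lVert b^j_S\rVert_{p_j}\le\mathbf A|S|^{1/p_j}$ and moreover, by the stopping construction, $\inf_{Q}M|b^j_S|^{p_j}\le\delta^{-1}\mathbf A^{p_j}$ on every cube $Q$ of the stopping tree below $S$, one has good local control on $b^j_S$ along the tree. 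A martingale transform (the bad projection is a martingale transform with $\pm1$ multipliers, cf.\ the proof of Proposition~\ref{p.bad}) maps $BMO$ to $BMO$ with a dimensional constant, and $b^j_S$, being a normalized $L^{p_j}$ function with the indicated maximal function control, lies in dyadic $BMO$ with norm $\lesssim1$ (this is the same John--Nirenberg-type reasoning used for \eqref{e.f_bmo}). Hence $\lVert\widetilde\beta^j_S\rVert_{BMO}\lesssim\lVert b^j_S\rVert_{BMO}\lesssim1$.

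Next I would prove the averaged $L^q$ decay. Fix $\omega^j\in\Omega^j$, so that the stopping tree $\widetilde{\mathcal S}^j$, the function $b^j_S$, and all the cubes $Q\in\mathcal A^j$ with $\pi_{\widetilde{\mathcal S}^j}Q=S$ are determined and independent of $\omega^k$. The badness of each such $Q\in\mathcal D^j$ is a random variable on $\Omega^k$, exactly as in Proposition~\ref{p.bad}. Write $g:=\sum_{Q\in\mathcal A^j\,:\,\pi_{\widetilde{\mathcal S}^j}Q=S}D_Qb^j_S$, a fixed function independent of $\omega^k$; then $\widetilde\beta^j_S=P^j_{\mathrm{bad}}g$ (the bad projection selects precisely the bad $Q$ among the terms of $g$, and terms of $g$ over good $Q$ are discarded). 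Applying Proposition~\ref{p.bad} with $\phi=g$ gives
\begin{equation*}
\mathbb E^k\lVert\widetilde\beta^j_S\rVert_q^q\lesssim 2^{-c_q\epsilon r}\lVert g\rVert_q^q\,.
\end{equation*}
It then remains to show $\lVert g\rVert_q^q\lesssim|S|$ uniformly in $q$ on compact subintervals of $(1,\infty)$. Since $g$ is supported on $S$ and its martingale differences telescope, $g=(\text{conditional expectation of }b^j_S\text{ onto the }\widetilde{\mathcal S}^j\text{-stopping }\sigma\text{-algebra below }S)$ minus $b^j_S$ restricted to $S$ up to the coarsest level, i.e.\ $g$ is controlled by $b^j_S$ and its stopping averages. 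Using $\lVert b^j_S\rVert_{p_j}\le\mathbf A|S|^{1/p_j}$ together with the stopping conditions (boundedness of the averages $\langle b^j_S\rangle_Q$ and the maximal-function bound \eqref{e.inf}), the John--Nirenberg inequality again upgrades this to $\lVert g\rVert_q\lesssim|S|^{1/q}$ for every $q\in(1,\infty)$, with constants depending only on $q,p_j,n,\mathbf A$. Combining the two displays yields $\mathbb E^k\lVert\widetilde\beta^j_S\rVert_q^q\lesssim 2^{-c\epsilon r}|S|$.

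The main obstacle I anticipate is the bookkeeping needed to pass from the raw $L^{p_j}$ normalization of $b^j_S$ to the full scale of $L^q$-bounds — both for the $BMO$ bound and for $\lVert g\rVert_q\lesssim|S|^{1/q}$ — while keeping the constants independent of $\omega^1,\omega^2$; this is where the stopping conditions (a)--(c) and \eqref{e.inf} have to be used carefully, since without them $b^j_S$ need only be in $L^{p_j}$, not in $L^\infty$ or even in all $L^q$. The other point requiring care is the precise identification $\widetilde\beta^j_S=P^j_{\mathrm{bad}}g$ for a function $g$ genuinely independent of $\omega^k$: one must check that restricting the bad projection to those terms whose stopping parent is $S$ does not break the martingale-transform structure, which it does not, since the relevant $D_Q$ are mutually orthogonal and the restriction is just a further $0/1$ multiplier that commutes with $P^j_{\mathrm{bad}}$ and is $\omega^k$-measurable in the allowed way.
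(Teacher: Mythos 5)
Your proposal follows the same route as the paper: identify $\widetilde\beta^j_S=P^j_{\mathrm{bad}}g$ with $g:=\sum_{Q\,:\,\pi_{\widetilde{\mathcal S}^j}Q=S}D_Qb^j_S$ a fixed function of $\omega^j$ alone, use $L^{p_j}$-boundedness of martingale transforms together with the stopping control $\langle |Mb^j_S|^{p_j}\rangle_Q\lesssim 1$ (for cubes $Q$ with $\pi_{\widetilde{\mathcal S}^j}Q=S$) to get a uniform $p_j$-oscillation bound, then John--Nirenberg plus Proposition~\ref{p.bad} for the $L^q$ decay. The identification $\widetilde\beta^j_S=P^j_{\mathrm{bad}}g$ and its $\omega^k$-independence are argued correctly.

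One assertion is imprecise and, taken literally, false: you claim ``$b^j_S$ lies in dyadic $BMO$ with norm $\lesssim 1$'' and then write $\lVert\widetilde\beta^j_S\rVert_{BMO}\lesssim\lVert b^j_S\rVert_{BMO}\lesssim 1$. But $b^j_S$ is only a normalized $L^{p_j}$ function, and the stopping conditions give no oscillation control on cubes strictly inside a stopping child of $S$; on those cubes $b^j_S$ can oscillate wildly, so $b^j_S\notin BMO$ with a uniform bound. The statement that is both true and needed is $\lVert g\rVert_{BMO}\lesssim 1$: the martingale differences defining $g$ live only on cubes $Q$ with $\pi_{\widetilde{\mathcal S}^j}Q=S$, and precisely there the stopping rules control $\langle |Mb^j_S|^{p_j}\rangle_Q$. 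This is exactly the oscillation estimate the paper carries out directly, and you in fact use it correctly in your $L^q$ step (where you bound $\lVert g\rVert_q$ via $\lVert g\rVert_{BMO}$). So the slip is local and does not affect the structure of the argument, but the chain $\widetilde\beta^j_S=P^j_{\mathrm{bad}}g$, $\lVert g\rVert_{BMO}\lesssim 1$, martingale transforms preserve $BMO$, is the correct way to state what you are doing.
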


\begin{proof}
	Let $ Q\in\mathcal{D}^j$ be such that $ \pi _{\widetilde {\mathcal S} ^j}Q =  S$.
	Writing $\epsilon_{Q'}:=\mathbf{1}_{Q'\subset Q}
	\mathbf{1}_{\pi _{\widetilde{\mathcal S} ^{j}}Q'=S}
	\mathbf{1}_{\textup{$ Q'$ is bad}}$, we obtain
	\begin{align*}
	&\bigg(\int_Q \lvert \widetilde \beta ^j_S - \langle \widetilde \beta ^j_S\rangle_Q\rvert^{p_j} \; dx \bigg)^{1/p_j}
	=
\Bigl\lVert \sum_{Q'\subset Q } D_{Q'}  \widetilde \beta ^j_S \Bigr\rVert_{p_j}
\\
&= \Bigl\lVert 
	\sum_{Q' \in\mathcal{D}^j} \epsilon_{Q'} D_{Q'} (\mathbf 1_Q b ^j_S)
\Bigr\rVert_{p_j} 
\lesssim 
\lvert \lvert \mathbf{1}_Q b_S^j\rVert_{p_j}\le
\langle   \lvert  Mb ^j_S\rvert ^{p_1}  \rangle_Q ^{1/p_j} 
\lvert  Q\rvert ^{1/p_j}\lesssim \lvert Q\rvert^{1/p_j}\,.
\end{align*}
Here, we have appealed to the boundedness of martingale transforms, and the stopping rules.  
 The remaining cases either reduce to this, or are trivial. 
Hence the $ BMO$ assertion is true.

 Concerning the $ L ^{q}$ estimate, 
 we apply Proposition~\ref{p.bad} and
the John-Nirenberg inequality,
 \begin{align*}
 \mathbb{E}^k \lVert\widetilde \beta ^{j}_S \rVert_{q}^q 
 &=\mathbb{E}^k \biggl\lVert  P^j_{\textup{bad}}\biggl[  \sum_{\substack{Q\;:\; \pi _{\widetilde{\mathcal S} ^{j}}Q=S}}  D _Q b ^j_S
 \biggr]
\biggr \rVert_{q}^q 
\\& \lesssim
 2^{-c\epsilon r}\Bigl\lVert \sum_{\substack{Q \;:\; \pi _{\widetilde{\mathcal S} ^{j}}Q=S}} 
 D _Q b ^j_S \Bigr\rVert_{q}^q\lesssim 2^{-c\epsilon r}|S|\cdot
 \Bigl\lVert \sum_{\substack{Q \;:\; \pi _{\widetilde{\mathcal S} ^{j}}Q=S}}  D _Q b ^j_S \Bigr\rVert_{\mathrm{BMO}}^q.
 \end{align*} 
By arguing as above, we finish the proof.
\end{proof}
 \subsection{Truncation of the Stopping Tree}

We will use the functions $ \beta ^{j} _{S}$ as the basis of  perturbed  
stopping data, see Lemma \ref{l.GB}, but 
the path to this is not yet clear for these reasons: (A) the functions $ \beta ^{1}_S$ need not be  suitable to form the twisted martingale differences; 
(B) even if defined, the twisted martingale differences associated to bad cubes need not vanish; and (C) the functions $ f_j$ are unbounded.  A truncation of the stopping
tree will address all of these three issues.

Concerning point (B),  
there is 
a simple sufficient condition for a twisted martingale difference to be identically zero.   

\begin{proposition}\label{p.D=0} Assume that
 $ Q \in\mathscr{A}^j$  is bad, and no child of $ Q$ is 
 in $ \widetilde{\mathcal S} ^{j}$. 
 Suppose  $\langle \beta^j_{S}\rangle_Q\not=0$, where $S=\pi_{\mathcal{\widetilde{S}}^j} Q$.
Then, 
both $ \Delta  ^{\beta ^j}_Q f _j $ and $ \widetilde{D}  ^{\beta ^j}_Q f _j $ are well defined   using  $\widetilde{\mathcal S}^j$ in parent
selectors for $\beta^j$ functions,
and $ \Delta _Q ^{\beta ^{j} } f_j \equiv 0 \equiv \widetilde{D}  ^{\beta ^j}_Q f _j$. 
\end{proposition}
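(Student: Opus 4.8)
The plan is to isolate one elementary notion: call a locally integrable function $\phi$ \emph{$Q$-flat} if $D_Q\phi=0$, equivalently if $\langle\phi\rangle_{Q'}=\langle\phi\rangle_Q$ for every child $Q'\in\textup{ch}(Q)$. The proof will then rest on three observations: (i) every child $Q'$ of $Q$ satisfies $\pi_{\widetilde{\mathcal S}^j}Q'=S$, so that only the single function $\beta^j_S$ enters the formulas \eqref{e.twist} and \eqref{e.halfDef} for $\Delta^{\beta^j}_Q f_j$ and $\widetilde D^{\beta^j}_Q f_j$ once $\widetilde{\mathcal S}^j$ is used in the parent selectors; (ii) both $f_j$ and $\beta^j_S$ are $Q$-flat; and (iii) a one-line computation combining (i), (ii), and the hypothesis $\langle\beta^j_S\rangle_Q\neq0$.

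For (i) I would argue by dyadic nesting. Since $Q\subset S=\pi_{\widetilde{\mathcal S}^j}Q$, each child $Q'$ lies in $S\in\widetilde{\mathcal S}^j$, so $\pi_{\widetilde{\mathcal S}^j}Q'$ exists and is contained in $S$. Were it a proper subset $T\subsetneq S$, then $T\in\widetilde{\mathcal S}^j$ with $Q'\subseteq T$, and since $T$ and $Q$ are dyadic cubes both containing $Q'$ they are nested: the case $Q\subseteq T$ violates minimality of $S$ among cubes of $\widetilde{\mathcal S}^j$ containing $Q$, while the case $T\subsetneq Q$ forces $T=Q'$ because $Q'\in\textup{ch}(Q)$, contradicting the hypothesis that no child of $Q$ lies in $\widetilde{\mathcal S}^j$. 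Hence $\pi_{\widetilde{\mathcal S}^j}Q'=S$ for every $Q'\in\textup{ch}(Q)$.

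For (ii) the key input is the orthogonality of classical martingale differences: $D_Q(D_P g)=0$ for any dyadic $P\neq Q$ (it is constant on $Q$ if $P\supsetneq Q$, supported off $Q$ if $P\cap Q=\emptyset$, and of vanishing mean on $Q$ and on each of its children if $P\subsetneq Q$), whereas $D_Q(D_Q g)=D_Q g$. Applying $D_Q$ to the definition of $f_j$: the piece $\sum_{P\in\mathcal A^{j}_{\ast}}\langle\widetilde f_j\rangle_P\mathbf 1_P$ is constant on $Q$ (as $Q\in\mathcal A^j$ lies in a unique cube of $\mathcal A^{j}_{\ast}$ and is disjoint from the others), and the good part contributes $\mathbf 1_{\{Q\text{ good}\}}D_Q\widetilde f_j$; since $Q$ is bad, $D_Q f_j=0$. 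Applying $D_Q$ to $\beta^j_S=b^j_S-\widetilde\beta^j_S$ with $\widetilde\beta^j_S=\sum_P D_P b^j_S$ summed over bad $P\in\mathcal A^j$ with $\pi_{\widetilde{\mathcal S}^j}P=S$: every index $P\neq Q$ is annihilated, and $Q$ itself \emph{does} occur in the sum (it is bad, lies in $\mathcal A^j$, and has $\pi_{\widetilde{\mathcal S}^j}Q=S$), so $D_Q\widetilde\beta^j_S=D_Q b^j_S$ and therefore $D_Q\beta^j_S=0$. Thus both $f_j$ and $\beta^j_S$ are $Q$-flat. This step — recognising that $\widetilde\beta^j_S$ was built precisely so as to subtract off $D_Q b^j_S$ for exactly the bad cubes $Q$ with stopping parent $S$, and keeping $\pi_{\widetilde{\mathcal S}^j}$ rather than $\pi_{\mathcal S^j}$ throughout — is the only place that needs genuine care.

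The conclusion is then immediate. $Q$-flatness of $\beta^j_S$ together with $\langle\beta^j_S\rangle_Q\neq0$ gives $\langle\beta^j_S\rangle_{Q'}=\langle\beta^j_S\rangle_Q\neq0$ for each child $Q'$, so by (i) there is no division by zero in \eqref{e.twist} or \eqref{e.halfDef}, and both differences are well defined with $\widetilde{\mathcal S}^j$ in the parent selectors. On each child $Q'$ the $Q'$-slice of $\Delta^{\beta^j}_Q f_j$ equals $\bigl(\langle f_j\rangle_{Q'}-\langle f_j\rangle_Q\bigr)\beta^j_S/\langle\beta^j_S\rangle_Q$, and the $Q'$-slice of $\widetilde D^{\beta^j}_Q f_j$ equals $\bigl(\langle f_j\rangle_{Q'}-\langle f_j\rangle_Q\bigr)/\langle\beta^j_S\rangle_Q$; both vanish by $Q$-flatness of $f_j$. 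Summing over $Q'\in\textup{ch}(Q)$ yields $\Delta^{\beta^j}_Q f_j\equiv0\equiv\widetilde D^{\beta^j}_Q f_j$, as required.
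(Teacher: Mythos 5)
Your proof is correct and follows essentially the same route as the paper's (which states, more tersely, that the averages of $f_j$ and $\beta^j_S$ do not change from $Q$ to a child, so the ratios in \eqref{e.twist} and \eqref{e.halfDef} are all well defined and equal, hence cancel). Your write-up carefully fills in the supporting details — that every child of $Q$ has $\widetilde{\mathcal S}^j$-parent $S$, that $D_Q f_j=0$ because $Q$ is bad, and that $D_Q\beta^j_S=0$ because the perturbation $\widetilde\beta^j_S$ was built precisely to subtract off $D_Q b^j_S$ for such $Q$ — all of which the paper leaves implicit under "by assumptions and definitions."
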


\begin{proof}
	By assumptions and definitions, the  averages of $ f_j$ and $ \beta ^{j} _{S}$ do not change  moving from cube $ Q$ to a child of $ Q$.  
	By inspection of \eqref{e.twist} and \eqref{e.halfDef}, the ratios in the definition of 
	either martingale difference of $f_j$ are all 
	well defined and equal, hence they cancel.  
\end{proof}

The previous considerations  lead to the
following three types of undesirable cubes $ Q \in \mathcal A  ^{j}$,  where 
$\Lambda>1$ and $ 0 < \upsilon_1 <  4 ^{- 1 - n}$  are absorption parameters  and $ \{j,k\}= \{1,2\}$:
\begin{description}
\item[Type A] 
 \{$\langle \lvert  M\widetilde\beta ^{j} _{\pi  _{\widetilde{\mathcal S} ^{j}}Q }\rvert ^{ p_j}\rangle_Q  \ge \upsilon_1^{p_j}$ or
$ \langle \lvert  T^j\widetilde  \beta ^{j} _{\pi  _{\widetilde{\mathcal S} ^{j}}Q }
	\rvert ^{ p_k'}   \rangle_Q 
	\ge \upsilon_1^{p_k'} \mathbf T  ^{p_k'}   $ \}
	or
 	 $Q$ has a  child $S\in\widetilde{\mathcal{S}}^j$
	such that 
	\{$\langle \lvert M\widetilde \beta^j_S\rvert^{p_j} \rangle_S \ge \upsilon_1^{p_j}$
	or 
	$ \langle \lvert  T^j\widetilde  \beta ^{j} _{S}
	\rvert ^{p_k'}   \rangle_S 
	\ge \upsilon_1^{p_k'} \mathbf T  ^{p_k'}   $\};
\item[Type B]   $ Q$ is not of Type A and  $ Q$ has a child in $ \widetilde {\mathcal S}^{j}$, and $ Q$ is bad;
	  
\item[Type C]  $ Q$ is not of Type A, nor Type B, and $ \langle \lvert  f_j\rvert  \rangle_Q >  \Lambda  $. 
\end{description}
Each of these three types  depend upon both $ \omega ^{1}$ and $ \omega ^{2}$.
Let $ \mathcal B ^{j, \alpha}$ be the collection of  maximal  cubes in $\mathcal{A}^j$
of Type $ \alpha$, $ \alpha=A,B,C$, and let $ \mathcal B ^{j}$ be the maximal cubes  in the union of  these three collections.  
Define $ B ^{j, \alpha } := \bigcup \{Q \;:\; Q \in \mathcal B ^{j,\alpha }\}$, and $ B ^{j} := B ^{j,A}\cup B ^{j,B} \cup B ^{j,C}$.  

Let us verify that the sets $ B ^{j}$ are small in measure, on average.
	Therefore certain error terms coming from the truncation can be later absorbed.

\begin{lemma}\label{e.EB<} For $\{j,k\}=\{1,2\}$, we have 
$	\mathbb E \lvert   B ^{j}\rvert=\mathbb{E}^j\mathbb{E}^k \lvert   B ^{j}\rvert  \lesssim \big\{ \Lambda ^{-2p_j} +  
	\upsilon_1 ^{-p_j}    2 ^{- c \epsilon r}\big\} \lvert  Q ^{0}\rvert$.
\end{lemma}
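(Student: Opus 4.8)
The plan is to estimate each of the three sets $B^{j,A}$, $B^{j,B}$, $B^{j,C}$ separately and then add. In all three cases the strategy is the same: the defining condition of the type is a ``largeness'' event for some function, so by Chebyshev/maximal function bounds the measure of the corresponding union of maximal cubes is controlled by an $L^q$ norm of that function, and then one takes expectations and invokes the smallness estimates already established, chiefly Proposition~\ref{p.bad} and Lemma~\ref{l.tildeB}.

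For \textbf{Type C}, which produces the $\Lambda^{-2p_j}$ term, I would argue pointwise in $(\omega^1,\omega^2)$. The cubes in $\mathcal B^{j,C}$ are maximal with $\langle |f_j|\rangle_Q>\Lambda$, hence they are contained in $\{M^{\mathcal D^j}f_j>\Lambda\}$ (dyadic maximal function), and they are disjoint, so $|B^{j,C}|\le \Lambda^{-q}\lVert M^{\mathcal D^j} f_j\rVert_q^q \lesssim \Lambda^{-q}\lVert f_j\rVert_q^q \lesssim \Lambda^{-q}|Q^0|$ by \eqref{e.f_bmo}, uniformly in $\omega^1,\omega^2$; choosing $q=2p_j$ gives the claimed power. (One must remember that these are the maximal cubes \emph{not} of Type A or B, which only makes the collection smaller, so the bound still holds.)

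For \textbf{Type A}, the maximal cubes satisfy $\langle |M\widetilde\beta^j_S|^{p_j}\rangle_Q\ge \upsilon_1^{p_j}$ or $\langle |T^j\widetilde\beta^j_S|^{p_k'}\rangle_Q\ge \upsilon_1^{p_k'}\mathbf T^{p_k'}$ for $S=\pi_{\widetilde{\mathcal S}^j}Q$, or the same with a stopping child $S$ in place of $Q$ in the averaging cube. Grouping the maximal Type A cubes by their stopping parent $S\in\widetilde{\mathcal S}^j$ and using disjointness, Chebyshev gives, for each $S$,
\begin{equation*}
\bigl|\{Q\in\mathcal B^{j,A}: \pi_{\widetilde{\mathcal S}^j}Q=S\}\bigr|
\lesssim \upsilon_1^{-p_j}\lVert M\widetilde\beta^j_S\rVert_{p_j}^{p_j}
+ \upsilon_1^{-p_k'}\mathbf T^{-p_k'}\lVert T^j\widetilde\beta^j_S\rVert_{p_k'}^{p_k'}.
\end{equation*}
Summing over $S$, taking $\mathbb E=\mathbb E^j\mathbb E^k$, and using $\mathbb E^k\lVert\widetilde\beta^j_S\rVert_q^q\lesssim 2^{-c\epsilon r}|S|$ from Lemma~\ref{l.tildeB} (together with the $L^{p_j}$-boundedness of $M$ for the first term, and the $L^{p_k'}$-boundedness of $T^j$ — which costs a factor $\mathbf T^{p_k'}$ that cancels the $\mathbf T^{-p_k'}$ — for the second), one gets $\sum_S 2^{-c\epsilon r}|S|$; since $\widetilde{\mathcal S}^j$ is sparse (Carleson) and all its cubes lie in $\mathcal A^j_*$ which covers $Q^0$, $\sum_S |S|\lesssim |Q^0|$. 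This yields $\mathbb E|B^{j,A}|\lesssim \upsilon_1^{-p_j}2^{-c\epsilon r}|Q^0|$ after adjusting the constant $c$ to the smaller of the two exponents. The Type A clause involving a stopping child $S$ is handled identically, summing instead over those children.

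For \textbf{Type B}, the maximal Type B cubes are bad cubes $Q$ with a stopping child; their union has measure controlled by $\sum_{Q\text{ bad}}|Q|$ restricted to $\mathcal A^j$, and the badness of $Q\in\mathcal D^j$ is an event in $\Omega^k$ of probability $\pi_{\textup{bad}}\lesssim 2^{-\epsilon r}$, so $\mathbb E^k|B^{j,B}|\lesssim 2^{-\epsilon r}\sum_{S\in\widetilde{\mathcal S}^j}\sum_{Q: \pi Q=S,\ Q\text{ has stopping child}}|Q|$; the inner sum is $\le|S|$ by disjointness of maximal such $Q$ in $S$, and summing over the sparse $\widetilde{\mathcal S}^j$ gives $\lesssim 2^{-\epsilon r}|Q^0|$, which is absorbed into $\upsilon_1^{-p_j}2^{-c\epsilon r}|Q^0|$. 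Combining the three estimates gives the lemma. The main obstacle I anticipate is bookkeeping in the Type A case: one has two different largeness conditions with two different exponents ($p_j$ versus $p_k'$) and an extra clause about stopping children, so care is needed to make sure the bounded-ness of $T^j$ on $L^{p_k'}$ is legitimately available (it is, since $T$ is assumed bounded on $L^2$ with norm $\mathbf T$, and one interpolates/extrapolates, or simply notes the needed $L^{p_k'}$ bound follows from Calderón--Zygmund theory with constant $\lesssim \mathbf T+1$) and that the resulting $\mathbf T$ factors cancel so that no uncontrolled $\mathbf T$ survives on the right-hand side.
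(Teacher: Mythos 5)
Your proposal is correct and follows essentially the same route as the paper: a three-way split into Types A, B, C, with Chebyshev combined with Lemma~\ref{l.tildeB} and the $L^{p_j}$-boundedness of the maximal function (and the $L^{p_k'}$ operator-norm bound $\lesssim\mathbf T$ for $T^j$) handling Type A, the badness-probability bound $\pi_{\textup{bad}}\lesssim 2^{-\epsilon r}$ together with the Carleson property of $\widetilde{\mathcal S}^j$ handling Type B, and Chebyshev with the John--Nirenberg estimate~\eqref{e.f_bmo} handling Type C. The only minor imprecision is in the Type B bookkeeping, where the key point is not really ``disjointness of such $Q$'' but rather that each such $Q$ has at least one stopping child and the stopping children of $S$ satisfy the sparseness bound $\sum_{S'\in\textup{ch}_{\widetilde{\mathcal S}^j}(S)}\lvert S'\rvert\le\tau\lvert S\rvert$; this is exactly what the paper does by summing over $S\in\widetilde{\mathcal S}^j$ with $\mathbf 1_{\pi S\text{ bad}}$.
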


\begin{proof}	
We first prove that 
\begin{equation}\label{e.typeA<}	\mathbb{E}^k \lvert  B ^{j,A}\rvert \lesssim \upsilon_1 ^{- p_j} 2 ^{- c \epsilon r} 
	\lvert  Q ^{0}\rvert$, where $\{j,k\}=\{1,2\}\,.
	\end{equation}
Recall that the collection $\widetilde{\mathcal S} ^{j}$ is only a function of $ \omega ^{j}$.  
By sparsness,
$	\sum_{S\in \widetilde{\mathcal S} ^{j}} \lvert  S\rvert \lesssim \frac 1 {1 - \tau } \lvert  Q ^{0}\rvert \lesssim \lvert  Q ^{0}\rvert$.
A cube is of Type A for four potential reasons;
Fix $ S\in \widetilde{\mathcal{S}}^j$, and 
let $ \mathcal B ^{j,A_1} _S $ be the maximal cubes
$ Q\in\mathcal{A}^j$ with $ \pi _{\widetilde{\mathcal S} ^{j}} Q=S$, and
$ \langle \lvert M\widetilde  \beta ^{j} _S\rvert ^{ p_j}   \rangle_Q \ge  \upsilon_1^{p_j}$. 
By Lemma \ref{l.tildeB},   
\begin{equation*}
	\mathbb{E}^k \sum_{Q\in\mathcal B ^{j,A_1} _S } \lvert  Q\rvert 
	\lesssim \upsilon_1 ^{-p_j}\mathbb{E}^k \int _{S} \lvert  \widetilde \beta ^{j}_S\rvert ^{p_j} 
	\lesssim \upsilon_1 ^{-p_j} 2 ^{- c \epsilon r} \lvert  S\rvert \,.  
\end{equation*}
Second, let $ \mathcal B ^{j,A_2} _S $ be the maximal cubes $ Q\in \mathcal{A}_j$ with $ \pi _{\widetilde{\mathcal S} ^{j}} Q=S$, and
$ \langle \lvert T^j\widetilde  \beta ^{j} _S\rvert ^{ p_k'}   \rangle_Q \ge  \mathbf T ^{p_k'}  \upsilon_1^{p_k'}$.   
Then, using the a priori norm
bound $c\mathbf{T}$ for the operator $T^j$ on $L^{p_k'}$ and inequality $p_k'\le p_j$, 
\begin{equation*}
	\mathbb{E}^k \sum_{Q\in\mathcal B ^{j,A_2} _S } \lvert  Q\rvert 
	\lesssim \upsilon_1 ^{-p_k'} \cdot \mathbb{E}^k \int _{S} \lvert  \widetilde \beta ^{j}_S\rvert ^{p_k'} 
	\lesssim \upsilon_1 ^{-p_j} 2 ^{- c \epsilon r}  \lvert  S\rvert \,.  
\end{equation*}
 Third, let $\mathcal{B}^{j,A_3}$ be the collection of cubes $Q$ in
$\mathcal{A}^j$, having a child
$S\in\widetilde{\mathcal{S}}^j$ with
$\langle \lvert M\widetilde{\beta}^j_S \rvert^{p_j}\rangle_S \ge \upsilon_1^{p_j}$.
Then,
\begin{align*}
\mathbb{E}^k\sum_{Q\in\mathcal{B}^{j,A_3}} \lvert Q\rvert 
\lesssim  \upsilon_1^{-p_j} \sum_{S\in\widetilde{\mathcal{S}}^j} 
\mathbb{E}^k\int_S \lvert
\widetilde\beta^j_S\rvert^{p_j} \lesssim \upsilon_1 ^{-p_j} 2 ^{- c \epsilon r} 
\sum_{S\in \widetilde{\mathcal{S}}^j}\lvert  S\rvert \lesssim \upsilon_1 ^{-p_j} 2 ^{- c \epsilon r} \lvert Q^0\rvert\,.
\end{align*}
A similar estimate for the remaining collection
$\mathcal{B}^{j,A_4}$ of cubes $Q$ in
$\mathcal{A}^j$, having a child
$S\in\widetilde{\mathcal{S}}^j$ such that
$\langle \lvert T^j\widetilde{\beta}^j_S \rvert^{p_k'}\rangle_S \ge \upsilon_1^{p_k'}\mathbf{T}^{p_k'}$,
finishes the proof of inequality \eqref{e.typeA<}. 
	
Let us then consider the set  $  B ^{j,B} $.
The collection $\widetilde{\mathcal S} ^{j}$ is only a function of $ \omega ^{j}$, and holding that variable fixed, 
the event that $ S\in \widetilde{\mathcal S} ^{j}$ has a bad parent 
is an event in $ \Omega ^{k}$. 
And so, 
\begin{equation}\label{e.typeB<}
	\mathbb{E}^k \lvert  B ^{j,B}\rvert  \le 2^n\cdot \mathbb{E}^k
	\sum_{S\in \widetilde{\mathcal S} ^{j}} \lvert  S\rvert \mathbf 1_{ \pi S \textup{ is bad}} 
	\lesssim \frac 1 { 1- \tau } 2 ^{- \epsilon r} \lvert  Q ^{0}\rvert  \lesssim  2 ^{- \epsilon r} \lvert  Q ^{0}\rvert\,.   
\end{equation}
For the remaining set $ B ^{j,C}$, recall that $ f_j $ is a dyadic 
$ BMO$ function, uniformly over $\omega^1$ and $\omega^2$. More precisely, by 
Chebyshev's inequality and \eqref{e.f_bmo}, we have 
\begin{equation}\label{e.typeC<}
	|B^{j,C}|= \sum_{Q\in \mathcal B ^{j,C}} \lvert  Q\rvert \le \lvert  \{ M f_j > \Lambda \} \rvert 
		 \le \Lambda ^{-2p_j} \lVert Mf_j\rVert_{2p_j}^{2p_j} \lesssim \Lambda^{-2p_j}\lvert Q^0\rvert\,.   
\end{equation}
The proof is completed by combining inequalities \eqref{e.typeA<}, \eqref{e.typeB<},
and \eqref{e.typeC<}.
\end{proof}

		Next we define the collection $ \mathcal G ^{j}$, and the 
 perturbed 
stopping data for $\mathcal{G}^j$, claimed by Lemma~\ref{l.GB}. 
	This is done by truncating the stopping tree $ \widetilde {\mathcal S} ^{j} $ at  $ \mathcal B ^{j}$.

\begin{definition}
Take $ \mathcal G ^{j}$ to be all \emph{good} cubes in $ \mathcal A ^{j}$ that are \emph{not contained} in  any cube in $\mathcal{B} ^{j} $.
Set  $  {\mathcal S} ^{j}$ to be $  \widetilde {\mathcal S }^{j}$ minus all cubes
that are \emph{strictly} contained in some $ Q\in \mathcal B ^{j}$.  For convenience,
 we also denote by $\mathcal{R}^j\supset \mathcal{G}^j$ all cubes in $\mathcal{A}^j$,
  {\em both good and bad},  not contained in any cube in
 $\mathcal{B}^j$.  Take the 
  data for $ \mathcal G ^{j}$ to be $ \{ \beta ^{j}_S \;:\; S\in {\mathcal S} ^{j}\}$.
  \end{definition}

   Let us emphasize the fact that $Q\in\mathcal{R}^j$ is not
 of any Type $\alpha$, $\alpha=A,B,C$. 
In the remaining part of this section, we will check all the assertions in Lemma \ref{l.GB}.

\subsection*{Verification of the Perturbed Stopping Data}
First we show that $ \{ \beta ^{j}_S \;:\; S\in {\mathcal S} ^{j}\}$ is indeed a perturbed stopping data
for $\mathcal{G}^j$, as claimed.
By construction,
\begin{equation}\label{e.coincide}
\pi_{\mathcal{S}^j} Q = \pi_{\widetilde{\mathcal{S}}^j} Q,\quad \pi_{\mathcal{S}^j} Q' = \pi_{\widetilde{\mathcal{S}}^j} Q'
\end{equation}
if  $Q\in\mathcal{R}^j$  and $Q'\in\textup{ch}(Q)$.
Accordingly $ \{ \beta ^{j}_S \;:\; S \in \mathcal S ^{j}\}$ satisfies property (1) 
in Definition~\ref{d.stoppingData} of 
 perturbed  stopping data.  
 Another consequence of \eqref{e.coincide} is
 that we can compute the martingale differences 
 $\Delta_P^{\beta^j}$ and $\widetilde{D}_P^{\beta^j}$
in case of $P\in\mathcal{R}^j$
 by using freely either $\mathcal{S}^j$ or $\widetilde{\mathcal{S}}^j$
 in the parent selectors for $\beta^j$ functions.

The sparseness property (3) is trivial 
for $\mathcal{S}^j$,
since
$\widetilde{\mathcal{S}}^j$ satisfies it and $\mathcal{S}^j\subset \widetilde{\mathcal{S}}^j$.
The remaining properties (2a)---(2c) of the perturbed stopping data follow from the next lemma.

\begin{lemma}\label{l.typeA}  Fix $j\in \{1,2\}$ and a cube $ S \in \mathcal S ^{j}$. 
	Then, the following conditions (1)---(3) hold:
\begin{enumerate}
\item  $ \langle \beta ^{j}_S \rangle_S = 1$;
	\item $\langle \lvert  \beta ^{j}_S\rvert ^{p_j}  \rangle_S  \lesssim \mathbf A ^{p_j} $;
	\item  Suppose $Q\in \mathcal{R}^j$ and		
		$ \pi _{\mathcal S ^{j}}Q=S$ (or
			$Q$ is a child of a cube in $\mathcal{R}^j$ and		
		$ \pi _{\mathcal S ^{j}}Q=S$). Then
\begin{enumerate}
\item  $  \langle \beta ^{j}_S \rangle_Q \ge \tfrac 1 4 $;
	
\item $ \langle \lvert   M\beta ^{j}_S\rvert ^{p_j}  \rangle_Q  \lesssim  \delta ^{-1}\mathbf{A}^{p_j}$;

\item $ \langle \lvert T^j  \beta ^{j}_S\rvert ^{ p_k'}  \rangle_Q  \lesssim  \delta ^{-1} \mathbf T ^{p_k'} _{\textup{loc}} 
	+ \upsilon _1 ^{p_k'} \mathbf T ^{p_k'}$, where $\{j,k\}=\{1,2\}$.
\end{enumerate}
\end{enumerate}
\end{lemma}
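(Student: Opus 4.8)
The plan is to deduce the three assertions for the perturbed function $\beta^j_S = b^j_S - \widetilde\beta^j_S$ from the corresponding (known) properties of the auxiliary function $b^j_S$ — which are built into the construction of $\widetilde{\mathcal S}^j$ — by using that $\widetilde\beta^j_S$ is a small perturbation, together with the crucial fact that every cube $Q\in\mathcal R^j$ with $\pi_{\mathcal S^j}Q = S$ is \emph{not} of Type A. For assertion (1), I would simply observe that $\int_S D_Q b^j_S\,dx = 0$ for each $Q$, so $\int_S \widetilde\beta^j_S = 0$ and hence $\langle\beta^j_S\rangle_S = \langle b^j_S\rangle_S = 1$ by the normalization in Definition \ref{d.system}(1) (recall $S\in\widetilde{\mathcal S}^j$ passed the stopping test, or is in $\mathcal A^j_\ast$). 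For assertion (2), the $L^{p_j}$-bound: by the triangle inequality $\|\beta^j_S\|_{p_j}\le \|b^j_S\|_{p_j} + \|\widetilde\beta^j_S\|_{p_j}$; the first term is $\le \mathbf A|S|^{1/p_j}$ by the accretivity hypothesis, and the second is controlled using that $\widetilde\beta^j_S$ is a martingale transform of $\mathbf 1_S b^j_S$ (as in the proof of Lemma \ref{l.tildeB}), giving $\|\widetilde\beta^j_S\|_{p_j}\lesssim \|b^j_S\|_{p_j}\lesssim \mathbf A|S|^{1/p_j}$.

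Assertion (3) is the heart of the matter and is where the Type A exclusion does the work. For (3a), the lower bound on the average: write $\langle\beta^j_S\rangle_Q = \langle b^j_S\rangle_Q - \langle \widetilde\beta^j_S\rangle_Q$. Since $Q\in\mathcal R^j$ with $\pi_{\mathcal S^j}Q = S = \pi_{\widetilde{\mathcal S}^j}Q$ (by \eqref{e.coincide}), the defining stopping rule for $\widetilde{\mathcal S}^j$ never cut below $Q$ for failing criterion (a), so $\langle b^j_S\rangle_Q \ge \tfrac12$ (this is precisely stopping criterion (a) of Definition \ref{d.stoppingData} with $\beta^j_S := b^j_S$, which $Q$ satisfies since it is not a stopping cube strictly below $S$). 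For $\langle\widetilde\beta^j_S\rangle_Q$: because $Q$ is not of Type A, we have $\langle|M\widetilde\beta^j_{\pi_{\widetilde{\mathcal S}^j}Q}|^{p_j}\rangle_Q < \upsilon_1^{p_j}$, hence $|\langle\widetilde\beta^j_S\rangle_Q|\le \langle|\widetilde\beta^j_S|\rangle_Q \le \langle|\widetilde\beta^j_S|^{p_j}\rangle_Q^{1/p_j}\le \langle |M\widetilde\beta^j_S|^{p_j}\rangle_Q^{1/p_j} < \upsilon_1 < 4^{-1-n} < \tfrac14$. Combining, $\langle\beta^j_S\rangle_Q > \tfrac12 - \tfrac14 = \tfrac14$. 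The case where $Q$ is only a child of a cube $R\in\mathcal R^j$ with $\pi_{\mathcal S^j}Q = S$ is handled the same way: $R$ is not of Type A, and Type A includes the clause about children $S'\in\widetilde{\mathcal S}^j$ — but here $Q\notin\widetilde{\mathcal S}^j$ since $\pi_{\mathcal S^j}Q = S\ne Q$, so $Q$ behaves like an ordinary descendant and the argument via $M\widetilde\beta^j_S$ at $Q$ still applies after noting $\langle|M\widetilde\beta^j_S|^{p_j}\rangle_Q \lesssim \langle|M\widetilde\beta^j_S|^{p_j}\rangle_R$ up to the dimensional constant from passing to a child (this is where the slightly weaker constant $\tfrac14$ rather than $\tfrac12$, and the $\upsilon_1 < 4^{-1-n}$ threshold, is used).

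For (3b), split $M\beta^j_S \le M b^j_S + M\widetilde\beta^j_S$. On $Q$, the stopping rule \eqref{e.inf} guarantees $\inf_{x\in Q}M|b^j_S|^{p_j}(x)\le \delta^{-1}\mathbf A^{p_j}$ was \emph{not} violated below $S$, equivalently $\langle|Mb^j_S|^{p_j}\rangle_Q\le \delta^{-1}\mathbf A^{p_j}$ (this is criterion (b)); and $\langle|M\widetilde\beta^j_S|^{p_j}\rangle_Q \le \upsilon_1^{p_j} < 1$ by the non-Type-A property, so $\langle|M\beta^j_S|^{p_j}\rangle_Q\lesssim \delta^{-1}\mathbf A^{p_j}$. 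For (3c), write $T^j\beta^j_S = T^j b^j_S - T^j\widetilde\beta^j_S$; criterion (c) of the stopping construction gives $\langle|T^j b^j_S|^{p_k'}\rangle_Q \le \delta^{-1}\mathbf T_{\textup{loc}}^{p_k'}$, while the non-Type-A property gives $\langle|T^j\widetilde\beta^j_S|^{p_k'}\rangle_Q\le \upsilon_1^{p_k'}\mathbf T^{p_k'}$, whence $\langle|T^j\beta^j_S|^{p_k'}\rangle_Q\lesssim \delta^{-1}\mathbf T_{\textup{loc}}^{p_k'} + \upsilon_1^{p_k'}\mathbf T^{p_k'}$ by convexity of $t\mapsto t^{p_k'}$. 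The main obstacle — and the only genuinely delicate point — is the bookkeeping in (3a)/(3b) for the child case: one must carefully check that Type A's clauses (the one at $Q$ itself, and the one about children in $\widetilde{\mathcal S}^j$) together cover exactly the averages of $M\widetilde\beta^j_S$ and $T^j\widetilde\beta^j_S$ that appear, so that no cube slips through with a large perturbation; this is why the definition of Type A has those four clauses, and the proof is essentially the verification that the clauses match the terms produced here.
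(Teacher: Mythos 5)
Your proposal matches the paper's proof in both strategy and substance: part (1) from the mean-zero property of the martingale differences defining $\widetilde\beta^j_S$, part (2) from the boundedness of martingale transforms, and part (3) from the failure of the Type A condition combined with the auxiliary stopping rules via equation \eqref{e.coincide}. The estimates in (3a) for $Q\in\mathcal{R}^j$, including the passage $\lvert\langle\widetilde\beta^j_S\rangle_Q\rvert\le\langle\lvert M\widetilde\beta^j_S\rvert^{p_j}\rangle_Q^{1/p_j}<\upsilon_1$, are exactly the paper's, and the parent-to-child comparison using the constant $2^n$ and the threshold $\upsilon_1<4^{-1-n}$ is the same device.

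There is one case you slip past. When $Q$ is a child of some $R\in\mathcal{R}^j$ with $\pi_{\mathcal{S}^j}Q=S$, you write ``$Q\notin\widetilde{\mathcal S}^j$ since $\pi_{\mathcal S^j}Q=S\ne Q$'', tacitly assuming $S\ne Q$. But $S=Q$ (equivalently $Q\in\mathcal{S}^j$) is a live case of the hypothesis, and it is precisely the one the third and fourth clauses of Type A were designed for. In that case (3a) is immediate from part (1) since $\langle\beta^j_Q\rangle_Q=1$; for (3b) one uses the a priori $L^{p_j}$-bound $\lVert Mb^j_Q\rVert_{p_j}\lesssim\mathbf{A}\lvert Q\rvert^{1/p_j}$ (not a stopping criterion below $Q$) together with $\langle\lvert M\widetilde\beta^j_Q\rvert^{p_j}\rangle_Q<\upsilon_1^{p_j}$, the latter coming from the non-Type-A status of the parent $R$ via the ``child $S\in\widetilde{\mathcal{S}}^j$'' clause; and (3c) uses the testing hypothesis $\langle\lvert T^jb^j_Q\rvert^{p_k'}\rangle_Q\le\mathbf{T}_{\textup{loc}}^{p_k'}$ directly, again paired with the child clause of Type A. You flagged this bookkeeping as the delicate point, which is the right instinct, but the $S=Q$ branch needs to be carried out rather than ruled out.
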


\begin{proof}
	By Definition \eqref{e.beta},  $\lvert  S\rvert=  \int _{S} b ^{j}_S \; dx = \int _{S} \beta ^{j}_S \; dx $, 
	so 
	property (1) holds. The boundedness of martingale transforms implies property (2): 
		$\int_S \lvert \beta ^{j}_S  \rvert  ^{p_j} \; dx \lesssim \int _{S } \lvert  b ^{j}_S\rvert ^{p_j} \; dx \le
		 \mathbf A ^{p_j}\lvert S\rvert \,. $

The properties (3a)---(3c) are a consequence  of equation \eqref{e.coincide}   and the failure of condition defining Type A cubes.   Let us first consider property (3a).
If $Q\in\mathcal{R}^j$ and $\pi_{\mathcal{S}^j}Q=S$, then
\begin{equation*}
	\langle \beta ^{j}_S \rangle_Q \ge \langle b ^{j}_S \rangle_Q - 
	\langle \lvert M\widetilde  \beta ^{j} _{S}\rvert^{p_j}  \rangle_Q^{1/p_j} \ge \tfrac 12 - \upsilon_1\,,
\end{equation*}
which is greater than $ 1/4$ (Recall that {\em stopping data} is
slightly stronger on this point).
	If $Q$ is a child of a cube in $\mathcal{R}^j$
	and $\pi_{\mathcal{S}^j}Q=S$, then
	either $Q\in\mathcal{S}^j$, in which case $\langle \beta ^{j}_S \rangle_Q=\langle \beta ^{j}_Q \rangle_Q=1$,
or
        the property  (3a) follows as above by first
        comparing
	the average of $\lvert \widetilde  \beta ^{j} _{S}|$ on $Q$ to its average on  $\pi Q$.
		Let us then consider (3b) and (3c) for $Q\in\mathcal{R}^j$.
	By sub-linearity  and stopping rules,
\begin{equation*}
 \langle \lvert M  \beta ^{j}_S\rvert ^{p_j}  \rangle_Q  
 \lesssim  \langle \lvert M  b ^{j}_S\rvert ^{p_j}  \rangle_Q  + 
  \langle \lvert M \widetilde  \beta ^{j}_S\rvert ^{p_j}  \rangle_Q  
  \le \delta ^{-1} \mathbf{A}^{p_j} + \upsilon_1^{p_j}\lesssim \delta ^{-1} \mathbf{A}^{p_j} \,. 
\end{equation*}
Likewise, $	\langle \lvert T^j  \beta ^{j}_S\rvert ^{p_k'}  \rangle_Q  
 \lesssim  \langle \lvert T^j  b ^{j}_S\rvert ^{p_k'}  \rangle_Q  + 
 \langle \lvert T^j \widetilde  \beta ^{j}_S\rvert ^{p_k'}  \rangle_Q  
 \le \delta ^{-1} \mathbf{T}^{p_k'} _{\textup{loc}} + \upsilon_1^{p_k'}\mathbf{T}^{p_k'}$.
These properties for a child $Q$ of a cube in $\mathcal{R}^j$ follow by comparing the average on $Q$ to that on $\pi Q$, in case of $Q\not\in\mathcal{S}^j$, and
by the stopping rules in case of $Q\in\mathcal{S}^j$. 
\end{proof}

\subsection*{Verification of Conditions (1)---(3) in Lemma~\ref{l.GB}}
Every cube $ Q\in \mathcal G ^{j}$ is good by definition and,
by construction, $\langle |f_j|\rangle_Q\le \Lambda$ (recall Type C cubes).
Let us then consider the property (3), concerning the sum 
of  half-twisted differences in \eqref{e.BNDD}.  
For a fixed  $ Q \in \mathcal G ^{j}$ with a child $ Q'$, and $ S\in \mathcal S  ^{j}$ with  $ \pi _{\mathcal S ^{j}} Q \subset S$,  
let us consider the constant $\lambda_{Q'}$ defined by 
\begin{equation*}
	\lambda _{Q'}\mathbf{1}_{Q'}:=\mathbf 1_{Q'} \sum_{\substack{ P\in\mathcal{A}^j  \;:\; P\supset Q\\  \pi _{\mathcal S ^{j}}P= S}}
	\widetilde D _P^{\beta^j} f_j\,.
\end{equation*}
In contrast to the series in \eqref{e.BNDD}, 
the series above extends over \emph{all cubes} with the same $ \mathcal S ^{j}$ parent.  
Nevertheless, 
we are not redefining $\lambda _{Q'}$. Indeed,
if $ P $ is a bad cube in the series above, 
then $P\in\mathcal{R}^j$ and it has no stopping children in $\widetilde{\mathcal{S}}^j$ due to the construction;
by property
\eqref{e.coincide} and Proposition~\ref{p.D=0}, we find that
 $\widetilde D _P^{\beta^j} f_j\equiv 0$, so the two series, in fact, coincide.

Then, by inspection of \eqref{e.halfDef}, 
the series above on $Q'$ is telescoping to the difference of two ratios
 (or to a single ratio). 
 On the numerator of the ratios are averages of $ f_j$, which 
are bounded by  the definition of Type C cubes.  The denominator of the ratios is an average of $ \beta ^{j}_S$, which is bounded below 
by $ 1/4 $ because of  (3a) in Lemma \ref{l.typeA}.
All in all, we find that $  \lvert  \lambda _{Q'}\rvert  \lesssim  \Lambda  $.
 \qed

\subsection{Completion of the proof of Lemma~\ref{l.GB}} 
The proof of inequality \eqref{e.goodSum}  remains,
and we need an appropriate representation
formula for $f_j$'s, so that we can compute the difference in \eqref{e.goodSum}.
We begin with certain preparations for the representation Lemma \ref{l.representation}.

Define $ \phi ^{j} := \sum_{Q\in \mathcal B ^{j}} \phi ^{j}_Q $,  where $\phi_Q^j = f_j \mathbf{1}_Q$ if $Q\in \mathcal{B}^j\cap \mathcal{A}^j_*$ and, otherwise, 
\begin{equation} \label{e.phiDef}
	\phi _{Q} ^{j} := 
	f_j \mathbf 1_{Q} - \frac { \langle f_j \rangle_Q} {\langle  \beta ^{j} _{\pi _{\mathcal S^j}Q}\rangle_{Q}} 
	\beta ^{j} _{\pi _{\mathcal S ^{j}}Q } \mathbf 1_{Q}\,.
\end{equation}
	For the following lemma, recall that the set $ B ^{j}$ is a function of both $\omega  ^{1}$ and $\omega ^{2} $, and it is of small measure in expectation. 

\begin{lemma}\label{e.phi_est}
We have 
$\lVert \phi ^{j}\rVert_{p_j} ^{p_j}  \lesssim  \Lambda^{p_j}  \lvert  B ^{j}\rvert$ for $j\in \{1,2\}$.
\end{lemma}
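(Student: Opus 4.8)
The plan is to estimate $\lVert \phi^j \rVert_{p_j}^{p_j}$ by decomposing $\phi^j = \sum_{Q \in \mathcal B^j} \phi^j_Q$ according to the type of maximality and exploiting that the cubes $Q \in \mathcal B^j$ are pairwise disjoint (they are the \emph{maximal} cubes of the union of the three Type collections), so that the $\phi^j_Q$ have disjoint supports and
\[
\lVert \phi^j \rVert_{p_j}^{p_j} = \sum_{Q \in \mathcal B^j} \lVert \phi^j_Q \rVert_{p_j}^{p_j}\,.
\]
For the summands with $Q \in \mathcal B^j \cap \mathcal A^j_\ast$, one simply has $\phi^j_Q = f_j \mathbf 1_Q$, so $\lVert \phi^j_Q \rVert_{p_j}^{p_j} = \int_Q \lvert f_j \rvert^{p_j}$; since $\mathcal A^j_\ast$ consists of at most $2^n$ cubes of comparable size to $Q^0$, and $f_j$ satisfies the $BMO$ bound \eqref{e.f_bmo}, this is $\lesssim \lvert Q^0 \rvert \lesssim \Lambda^{p_j} \lvert Q^0 \rvert$; but I should rather bound it by $\Lambda^{p_j}\lvert Q\rvert$ directly — and this is where I need to be a little careful, since $f_j$ need not be bounded on these top cubes. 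The cleanest route is: these top cubes are not relevant to the bound $\Lambda^{p_j}\lvert B^j\rvert$ unless they actually sit in $\mathcal B^j$, and when they do, $\lvert B^j\rvert \ge \lvert Q\rvert \gtrsim \lvert Q^0\rvert$, so $\int_Q \lvert f_j\rvert^{p_j} \lesssim \lvert Q^0\rvert \lesssim \lvert B^j\rvert \le \Lambda^{p_j}\lvert B^j\rvert$ using $\Lambda > 1$. (Alternatively one absorbs these finitely many terms into the general estimate below by the same Type-C reasoning applied to the parent in $\mathcal A^j_\ast$.)

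The main work is the generic summand \eqref{e.phiDef}. Fix $Q \in \mathcal B^j$ with $Q \notin \mathcal A^j_\ast$, write $S = \pi_{\mathcal S^j} Q = \pi_{\widetilde{\mathcal S}^j} Q$, and estimate by the triangle inequality
\[
\lVert \phi^j_Q \rVert_{p_j} \le \lVert f_j \mathbf 1_Q \rVert_{p_j} + \frac{\lvert \langle f_j \rangle_Q \rvert}{\lvert \langle \beta^j_S \rangle_Q \rvert}\, \lVert \beta^j_S \mathbf 1_Q \rVert_{p_j}\,.
\]
For the first term I want $\int_Q \lvert f_j\rvert^{p_j} \lesssim \Lambda^{p_j}\lvert Q\rvert$. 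This follows because $Q \in \mathcal B^j$ is \emph{maximal} of its type, hence its dyadic parent $\widehat Q$ is \emph{not} of Type C (it could be of Type A or B, but those conditions place no upper constraint that fails here — the key is only that $\widehat Q$ is not of Type C, i.e. $\langle \lvert f_j \rvert \rangle_{\widehat Q} \le \Lambda$); wait — that is not quite right either, since a maximal Type-A or Type-B cube may have a Type-C ancestor. The correct statement: take instead the minimal ancestor $R \supseteq Q$ that lies in $\mathcal A^j$ and is not contained in any cube of $\mathcal B^j$ other than via $Q$ — more simply, $\widehat Q$ (the dyadic parent of $Q$) satisfies $\widehat Q \in \mathcal R^j$ or $\widehat Q \in \mathcal A^j_\ast$, and in either case $\langle \lvert f_j\rvert \rangle_{\widehat Q} \le \Lambda$ (membership in $\mathcal R^j$ rules out Type C; membership in $\mathcal A^j_\ast$ with $\widehat Q$ not already maximal of any type is handled by the previous paragraph). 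Then $\int_Q \lvert f_j\rvert^{p_j} \le \int_Q (M_{\mathcal D^j} f_j)^{p_j}$ where the dyadic maximal function is controlled: on $Q$ it is at most $\max\{ \langle \lvert f_j\rvert\rangle_{\widehat Q},\ M_{\mathcal D^j}(f_j \mathbf 1_{\widehat Q}) \}$... Rather than belabour this, the honest clean argument is: $\langle \lvert f_j\rvert^{p_j}\rangle_{\widehat Q} \lesssim 1$ is false in general, but $\langle\lvert f_j\rvert\rangle_{\widehat Q}\le\Lambda$ combined with the $BMO$ property \eqref{e.f_bmo} and the John--Nirenberg inequality localized to $\widehat Q$ gives $\langle \lvert f_j\rvert^{p_j}\rangle_{\widehat Q} \lesssim \Lambda^{p_j} + 1 \lesssim \Lambda^{p_j}$, whence $\int_Q \lvert f_j\rvert^{p_j} \le \int_{\widehat Q}\lvert f_j\rvert^{p_j} \lesssim \Lambda^{p_j}\lvert \widehat Q\rvert \lesssim \Lambda^{p_j}\lvert Q\rvert$. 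For the second term, the denominator $\lvert\langle \beta^j_S\rangle_Q\rvert$ is bounded below: $Q$ or $\widehat Q$ lies in $\mathcal R^j$ with $\mathcal S^j$-parent $S$ (or $Q$ itself is a stopping cube), so by Lemma~\ref{l.typeA}(3a) (and its parent-comparison variant) $\lvert \langle\beta^j_S\rangle_Q\rvert \ge 1/4$; the numerator $\lvert\langle f_j\rangle_Q\rvert \le \langle\lvert f_j\rvert\rangle_Q \lesssim \Lambda$ by the same Type-C reasoning on $\widehat Q$; and $\lVert \beta^j_S\mathbf 1_Q\rVert_{p_j}^{p_j} = \int_Q \lvert\beta^j_S\rvert^{p_j} \le \int_Q (M\beta^j_S)^{p_j} \lesssim \delta^{-1}\mathbf A^{p_j}\lvert Q\rvert \lesssim \lvert Q\rvert$ by Lemma~\ref{l.typeA}(3b) (with $\delta$ a fixed structural constant). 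Combining, $\lVert\phi^j_Q\rVert_{p_j}^{p_j}\lesssim \Lambda^{p_j}\lvert Q\rvert$.

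Summing over the pairwise-disjoint $Q\in\mathcal B^j$ and using $\bigcup_{Q\in\mathcal B^j} Q = B^j$ gives
\[
\lVert\phi^j\rVert_{p_j}^{p_j} = \sum_{Q\in\mathcal B^j}\lVert\phi^j_Q\rVert_{p_j}^{p_j} \lesssim \Lambda^{p_j}\sum_{Q\in\mathcal B^j}\lvert Q\rvert = \Lambda^{p_j}\lvert B^j\rvert\,,
\]
which is the claim. I expect the main obstacle to be purely bookkeeping: making precise, for a \emph{maximal} cube $Q$ of mixed Type A/B/C origin, that its dyadic parent inherits the Type-C upper bound $\langle\lvert f_j\rvert\rangle_{\widehat Q}\le\Lambda$ (equivalently, $\widehat Q\in\mathcal R^j\cup\mathcal A^j_\ast$) so that one may invoke the John--Nirenberg bound \eqref{e.f_bmo} to pass from the $L^1$ control of $f_j$ on $\widehat Q$ to $L^{p_j}$ control on $Q$; once that is pinned down, every other estimate is a direct appeal to Lemma~\ref{l.typeA} and the boundedness of martingale transforms.
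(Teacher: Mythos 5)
Your proof is correct and takes essentially the same route as the paper: split off the finitely many top-level cubes in $\mathcal A^j_\ast$, and for $Q\in\mathcal B^j\setminus\mathcal A^j_\ast$ use the key structural fact that the dyadic parent $\widehat Q$ lies in $\mathcal R^j$ (hence is not Type C and the estimates of Lemma~\ref{l.typeA} apply), so that $|\langle f_j\rangle_Q|\lesssim\Lambda$, $\langle\beta^j_S\rangle_Q\ge1/4$, and $\int_Q|\beta^j_S|^{p_j}\lesssim|Q|$. The only cosmetic difference is that you bound $\lVert f_j\mathbf 1_Q\rVert_{p_j}$ directly via John--Nirenberg on $\widehat Q$, whereas the paper first subtracts $\langle f_j\rangle_Q\mathbf 1_Q$ and uses the uniform BMO bound on the oscillation plus a separate bound on the constant term; these are equivalent, and the rest of your argument matches the paper's.
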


\begin{proof}
%
If $Q\in \mathcal{B}^j\cap \mathcal{A}^j_*$ then, by \eqref{e.f_bmo},
$\lVert \phi_Q^j\rVert_{p_j} \lesssim \rvert Q^0\rvert^{1/p_j}\lesssim |B^j|^{1/p_j}$.
There are at most $2^n$ such cubes.  
For the remaining terms we notice that, since $ f _{j}$ is in BMO and the average values of $ f_j$ are controlled, 
\begin{align} 
	\Big\lVert \sum_{Q\in \mathcal B ^{j}\setminus \mathcal{A}^j_*} \phi ^{j}_Q\Big\rVert_{p_j} ^{p_j} 
	&= \sum_{Q} 
	\Bigl\lVert 
		f_j \mathbf 1_{Q} - \frac { \langle f_j \rangle_Q} {\langle  \beta ^{j} _{\pi _{\mathcal S^j}Q}\rangle_{Q}}  \beta ^{j} _{\pi _{\mathcal S ^{j}}Q } \mathbf 1_{Q} 
		\Bigr\rVert_{p_j} ^{p_j}
		\\  \label{e.zf<}
		& \lesssim   
		 \sum_{Q} 
	\biggl\{\lVert 
	f_j \mathbf 1_{Q} -  \langle f_j \rangle_Q \mathbf 1_{Q} \rVert_{p_j} ^{p_j} 
	+ \Bigl\lVert   \langle f_j \rangle_Q \mathbf 1_{Q} -  \frac { \langle f_j \rangle_Q} {\langle  \beta ^{j} _{\pi _{\mathcal S^j}Q}\rangle_{Q}}  \beta ^{j} _{\pi _{\mathcal S ^{j}}Q } \mathbf 1_{Q} 
	\Bigr\rVert_{p_j} ^{p_j} \biggr\}
\\		& \lesssim   \Lambda^{p_j} 
 \sum_{Q\in \mathcal B ^{j}\setminus \mathcal{A}^j_\ast}  \lvert  Q\rvert  \le  
 \Lambda^{p_j}   \lvert  B ^{j}\rvert  \,. 
		\end{align}
		 We used definition of Type C cubes and Lemma \ref{l.typeA}, along with the
		observation that the parent of $Q$ is in $\mathcal{R}^j$ if $Q\in\mathcal{B}^j\setminus\mathcal{A}^j_\ast$. 				\end{proof}

	Concerning the representation of $ f_j$, we have the following lemma.

\begin{lemma}\label{l.representation} 
Fix $j\in \{1,2\}$. Then, the following equality holds almost everywhere and in $ L ^{p_j}$
	\begin{equation} \label{e.representation}
	f_j  =  
	\sum_{Q\in \mathcal A ^{j} _{\ast }\setminus \mathcal{B}^j} \langle f_j \rangle_Q \beta ^{j}_Q + 
	\sum_{Q \in \mathcal G ^{j} } \Delta_Q ^{\beta ^{j}} f_j  + \phi ^{j}  \,. 
\end{equation}
\end{lemma}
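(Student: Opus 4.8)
The plan is to establish \eqref{e.representation} by telescoping the usual martingale difference decomposition of $f_j$ along the grid $\mathcal{D}^j$ and then \emph{reorganizing} the martingale differences $D_Q f_j$ into the twisted differences $\Delta_Q^{\beta^j} f_j$ over cubes in $\mathcal{G}^j$, plus the top-level terms over $\mathcal{A}^j_\ast$, plus the error $\phi^j$ supported on $B^j$. First I would recall that for $f_j$ as selected in \S``Selection of $f_j$'', one has the $L^{p_j}$-convergent expansion $f_j = \sum_{Q\in\mathcal{A}^j_\ast}\langle f_j\rangle_Q \mathbf{1}_Q + \sum_{Q\in\mathcal{A}^j,\ Q\text{ good}} D_Q f_j$; this is essentially the definition of $f_j$, noting that $D_Q f_j = D_Q \widetilde f_j$ whenever $Q$ is good and $Q\in\mathcal{A}^j$ (since $f_j$ and $\widetilde f_j$ agree up to the good martingale differences and the top averages, and $D_Q$ annihilates both the non-good pieces at other scales and constants). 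Convergence in $L^{p_j}$ and a.e.\ follows from \eqref{e.f_bmo} and the John--Nirenberg inequality, as already used for $\mathbb{E}\lVert\widetilde f_j - f_j\rVert_2^2$.

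Next I would decompose the cubes involved according to whether they lie in $\mathcal{G}^j$ (equivalently, in $\mathcal{R}^j$ and good) or are swallowed by $B^j = \bigcup\mathcal{B}^j$. Over a cube $S\in\mathcal{S}^j$ and its $\mathcal{S}^j$-descendants down to (but not past) the next stopping cubes or the truncation set, I would use the standard identity that converts a telescoping sum of ordinary martingale differences of $f_j$ into twisted martingale differences of $f_j$ against the fixed function $\beta^j_S$: writing $\langle f_j\rangle_Q = (\langle f_j\rangle_Q/\langle\beta^j_S\rangle_Q)\langle\beta^j_S\rangle_Q$ and using that $\langle\beta^j_S\rangle_Q \neq 0$ on $\mathcal{R}^j$ by Lemma~\ref{l.typeA}(3a), the sum $\sum D_Q f_j$ restricted to the corona of $S$ collapses, via the definition \eqref{e.twist}, to $\sum_Q \Delta_Q^{\beta^j} f_j$ plus boundary contributions of the form $\pm(\langle f_j\rangle_Q/\langle\beta^j_S\rangle_Q)\beta^j_S\mathbf{1}_Q$ at the cubes $Q$ where the corona is cut off. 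The boundary contributions at cubes where a genuine stopping cube $S'\in\mathcal{S}^j$ begins get absorbed into the next corona's expansion (whose leading term is $\langle f_j\rangle_{S'}\beta^j_{S'}$, matching since $\langle\beta^j_{S'}\rangle_{S'}=1$ by Lemma~\ref{l.typeA}(1)), so these cancel in pairs across the stopping tree; the top corona contributes the sum $\sum_{Q\in\mathcal{A}^j_\ast\setminus\mathcal{B}^j}\langle f_j\rangle_Q\beta^j_Q$; and the boundary contributions at cubes $Q\in\mathcal{B}^j$ (where we truncated) are precisely $\langle f_j\rangle_Q\mathbf{1}_Q - (\langle f_j\rangle_Q/\langle\beta^j_{\pi_{\mathcal{S}^j}Q}\rangle_Q)\beta^j_{\pi_{\mathcal{S}^j}Q}\mathbf{1}_Q = f_j\mathbf{1}_Q - \phi^j_Q\mathbf{1}_Q$ up to the $f_j\mathbf{1}_Q$ term, matching the definition \eqref{e.phiDef} so that the leftover equals $\phi^j$ after accounting for the $\mathcal{A}^j_\ast\cap\mathcal{B}^j$ case.

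The bookkeeping is where the care is required: one must check that every cube $Q$ appearing in the original expansion is accounted for exactly once, i.e.\ that $\{Q\in\mathcal{A}^j:Q\text{ good}\}$ is partitioned into (i) cubes in $\mathcal{G}^j$, on which $\Delta_Q^{\beta^j}f_j$ is well-defined (which needs $Q$ to have an $\mathcal{S}^j$-parent and $\langle\beta^j\rangle\neq 0$, supplied by Lemma~\ref{l.typeA}(3a) via \eqref{e.coincide}), and (ii) cubes contained in some member of $\mathcal{B}^j$, whose contributions telescope within $\phi^j$. The identity \eqref{e.coincide} is essential here to guarantee that the parent selectors used inside $\Delta_Q^{\beta^j}$ can be taken with respect to either $\mathcal{S}^j$ or $\widetilde{\mathcal{S}}^j$ without ambiguity. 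The main obstacle I anticipate is verifying the telescoping is legitimate as an $L^{p_j}$ (and a.e.) identity rather than merely a formal rearrangement: one needs the partial sums over cubes of side length $\ge 2^{-N}\ell Q^0$ to converge, which again follows from the BMO bound \eqref{e.f_bmo}, the uniform lower bound $\langle\beta^j_S\rangle_Q\ge 1/4$ on $\mathcal{R}^j$, and the $L^{p_j}$ control $\lVert\beta^j_S\rVert_{p_j}\lesssim\mathbf{A}\lvert S\rvert^{1/p_j}$ from Lemma~\ref{l.typeA}(2), together with the sparseness of $\mathcal{S}^j$; once convergence is in hand, the pairwise cancellation of the interior boundary terms across the stopping tree yields \eqref{e.representation}.
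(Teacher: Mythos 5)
Your plan to start from the ordinary martingale difference expansion of $f_j$ and then ``reorganize'' the $D_Q f_j$ into the twisted differences $\Delta_Q^{\beta^j} f_j$ does not go through as described, and this is a genuine gap. The substitution $\langle f_j\rangle_Q = (\langle f_j\rangle_Q/\langle\beta^j_S\rangle_Q)\langle\beta^j_S\rangle_Q$ is an identity for a number, but $\Delta_Q^{\beta^j}$ involves the \emph{function} $\beta^j_S$ on each child, not its average $\langle\beta^j_S\rangle_{Q'}\mathbf{1}_{Q'}$. Consequently the ordinary and twisted partial sums over any finite depth produce genuinely different objects: telescoping $\sum D_Q f_j$ down to a cube $P$ produces $\langle f_j\rangle_P\mathbf{1}_P$ minus the top average, whereas telescoping $\sum\Delta_Q^{\beta^j}f_j$ produces $\frac{\langle f_j\rangle_P}{\langle\beta^j_S\rangle_P}\beta^j_S\mathbf{1}_P$ minus the top term. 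These do not differ by a ``boundary contribution'' at the corona cut, and there is no standard identity converting one to the other level by level. The two expansions agree only in the limit, after Lebesgue differentiation has been invoked; so one cannot derive \eqref{e.representation} from the ordinary expansion by a corona-by-corona swap.

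A second, related omission: your bookkeeping partitions the good cubes of $\mathcal{A}^j$ into (i) $\mathcal{G}^j$ and (ii) those swallowed by $\mathcal{B}^j$, but for the twisted telescope to close, one must also account for the \emph{bad} cubes in $\mathcal{R}^j$ (cubes not contained in any member of $\mathcal{B}^j$ but bad), which sit in the corona chain between consecutive good cubes. These cubes are not in $\mathcal{G}^j$, so $\sum_{Q\in\mathcal{G}^j}\Delta_Q^{\beta^j}f_j$ a priori has gaps that break the telescope. The paper's proof handles this by invoking Proposition~\ref{p.D=0}: by construction of $\beta^j$, a bad cube $Q\in\mathcal{R}^j$ with no stopping children satisfies $\Delta_Q^{\beta^j}f_j\equiv 0$, so summing over $\mathcal{G}^j$ is the same as summing over all of $\mathcal{R}^j$, which does telescope consecutively. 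You never cite Proposition~\ref{p.D=0}, and without it the a.e.\ identity is not established.

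The right route (and the paper's) is to drop the comparison with the ordinary expansion entirely and telescope the twisted side directly: for $x\notin B^j$, after using Proposition~\ref{p.D=0} to pass to a sum over all of $\mathcal{R}^j$, the partial sum down to a cube $P\ni x$ equals $\frac{\langle f_j\rangle_P}{\langle\beta^j_{\pi_{\mathcal{S}^j}P}\rangle_P}\beta^j_{\pi_{\mathcal{S}^j}P}(x)$, which tends to $f_j(x)$ a.e.\ since sparseness of $\mathcal{S}^j$ forces the stopping parent of $P$ to stabilize; for $x\in B^j$, the telescope terminates at the unique $\mathcal{B}^j$ cube containing $x$ and the correction $\phi^j$ is exactly what is needed by the definition \eqref{e.phiDef}. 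Your observations about sparseness, the BMO bound on $f_j$, and the role of \eqref{e.coincide} and Lemma~\ref{l.typeA}(3a) are all pertinent to making this rigorous, but the reorganization step has to be replaced by a direct telescoping argument.
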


\begin{proof}
	Let $ Q$ be any bad cube, which is not contained in a cube in $\mathcal{B}^j$.
	By construction and Proposition~\ref{p.D=0}, $ \Delta ^{\beta ^{j}}_Q f _j \equiv 
	0$.  It follows that for any $ x\in B^j$, the sum above is in fact finite, and telescoping.  By inspection, it is equal to $ f_j (x)$.
	
	Consider $ x \not\in B^j$. Then by Proposition \ref{p.D=0}, for any cube $ P \ni x$,   
	\begin{equation*}
	\sum_{Q\in \mathcal A ^{j} _{\ast } \setminus \mathcal{B}^j} \langle f_j \rangle_Q \beta ^{j}_Q (x) + 
	\sum_{Q \in \mathcal G ^{j} \;:\; P\subsetneq Q  } \Delta_Q ^{\beta ^{j}} f_j (x) 
	=  \frac {\langle f_j \rangle_P} { \langle \beta  ^{ j} _{\pi _{\mathcal S_j} P} \rangle_P} \beta ^{j} _{\pi _{\mathcal S^j} P} (x) \,. 
\end{equation*}
Now, since $ \mathcal S ^{j}$ is sparse, almost every $ x $ is in only a finite number of cubes $ S \in \mathcal S^j$.  
Hence,  the proof is finished by appealing to a straightforward modification of \cite{1011.0642}*{Lemma 3.5}.
\end{proof}

 We also need a Hardy inequality.
 For a proof, we refer to \cite{1011.1747}*{Section 9}.
 
 \begin{lemma}\label{l.hardy}  Let $Q$ be  any cube in $\mathbf{R}^n$ and $\kappa>1$. 
 	 For every $ 1< p < \infty $, there holds
 	 \begin{equation}\label{e.hardy}
 	 	  \int _{\kappa Q \setminus Q} \int _{Q} \frac {\lvert  g_1 (y) g_2 (x)\rvert  } { \lvert  x-y\rvert ^{n} } \; dy dx 
 	 	 \lesssim \lVert g_1\rVert_{p} \lVert g_2\rVert_{p'} \,,\quad 1/p+1/p'=1\,.
\end{equation}
The implied constant depends upon $\kappa, p, n$.
\end{lemma}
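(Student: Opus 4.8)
The plan is to reduce \eqref{e.hardy}, by scaling and a finite geometric decomposition of $\kappa Q\setminus Q$, to the classical one-dimensional Hilbert inequality, with the transverse directions absorbed by Young's convolution inequality. Since $\lvert g_1(y)g_2(x)\rvert$ appears, we may assume $g_1,g_2\ge 0$. First I would normalize: the kernel $\lvert x-y\rvert^{-n}$ is homogeneous of degree $-n$, and under an affine map $x\mapsto (\ell Q)\,x+a$ the left-hand side of \eqref{e.hardy} and the product $\lVert g_1\rVert_p\lVert g_2\rVert_{p'}$ scale identically, so we may assume $Q=[0,1]^n$. Next, since every point of $\kappa Q\setminus Q$ has some coordinate outside $[0,1]$, the set $\kappa Q\setminus Q$ is covered by the $2n$ slabs $\{x\in\kappa Q:\ x_i<0\}$ and $\{x\in\kappa Q:\ x_i>1\}$, $i=1,\dots,n$; by the coordinate reflections and permutations, which are isometries preserving both $Q$ and $\kappa Q$, it suffices to bound the contribution of one of them, say $S:=\{x\in\kappa Q:\ x_1<0\}$. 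It is here that a little bookkeeping is needed: the covering must be arranged so that on each piece a single coordinate genuinely separates $x$ from $Q$, which is what produces the gain, while the remaining coordinates only need the translation-invariant estimate below.

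Writing $x=(-t,x')$ with $t\in(0,(\kappa-1)/2]$ and $x'\in\mathbf R^{n-1}$, and $y=(s,y')$ with $s\in[0,1]$ and $y'\in[0,1]^{n-1}$, one has (in the supremum norm used throughout) $\lvert x-y\rvert^{-n}=\phi_{s+t}(x'-y')$, where $\phi_r(z):=\max\{r,\lvert z\rvert\}^{-n}$ for $z\in\mathbf R^{n-1}$. A direct computation, splitting off $\{\lvert z\rvert\le r\}$ and decomposing the remainder into dyadic shells $\lvert z\rvert\sim 2^k r$, gives $\lVert\phi_r\rVert_{L^1(\mathbf R^{n-1})}\lesssim_{n} r^{-1}$. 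Hence, freezing $s$ and $t$, extending $g_1(s,\cdot)$ and $g_2(-t,\cdot)$ by zero to $\mathbf R^{n-1}$, and applying Young's inequality followed by Hölder's inequality,
\begin{equation*}
	\int_{\mathbf R^{n-1}}\int_{\mathbf R^{n-1}}\frac{g_1(s,y')\,g_2(-t,x')}{\max\{s+t,\lvert x'-y'\rvert\}^{n}}\,dy'\,dx'\ \lesssim_{n}\ \frac{F(s)\,H(t)}{s+t}\,,
\end{equation*}
where $F(s):=\lVert g_1(s,\cdot)\rVert_{L^p(\mathbf R^{n-1})}$ and $H(t):=\lVert g_2(-t,\cdot)\rVert_{L^{p'}(\mathbf R^{n-1})}$, so that $\lVert F\rVert_{L^p(0,1)}=\lVert g_1\rVert_{L^p(Q)}$ and $\lVert H\rVert_{L^{p'}(0,\infty)}\le\lVert g_2\rVert_{L^{p'}(\kappa Q\setminus Q)}$.

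It then remains to integrate in $s$ and $t$, and $\int_0^\infty\int_0^\infty (s+t)^{-1}F(s)H(t)\,ds\,dt\lesssim_{p}\lVert F\rVert_{p}\lVert H\rVert_{p'}$ is precisely Hilbert's integral inequality (equivalently, a one-line Schur test with weight $s\mapsto s^{-1/(pp')}$, using $\int_0^\infty (s+t)^{-1}t^{-1/p}\,dt=\tfrac{\pi}{\sin(\pi/p)}\,s^{-1/p}$). Summing the resulting bounds over the $2n$ slabs completes the argument; in fact the constant so obtained depends only on $n$ and $p$, so it is uniform in $\kappa>1$. The only genuine obstacle is the one flagged above—arranging the slab decomposition so that the clean convolution structure of the second paragraph is available on each piece—while the analytic core, namely the $L^1$ bound for $\phi_r$ and Hilbert's inequality, is entirely standard. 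When $n=1$ the transverse step is vacuous and one invokes Hilbert's inequality directly.
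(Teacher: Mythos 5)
Your proof is correct. Note first that the paper does not actually prove Lemma~\ref{l.hardy} itself; it simply defers to Auscher--Routin \cite{1011.1747}, Section~9. So your argument is necessarily an independent one, and it is a clean, self-contained route. The key steps all check out: the left-hand side of \eqref{e.hardy} and $\lVert g_1\rVert_p\lVert g_2\rVert_{p'}$ scale identically under affine maps, so one may take $Q=[0,1]^n$; the $2n$ slabs do cover $\kappa Q\setminus Q$, and since the paper measures distance in the $\ell^\infty$ norm, the reflections about the mid-hyperplanes of $Q$ (and coordinate permutations) are isometries preserving both $Q$ and its concentric dilate $\kappa Q$, so one slab suffices; on the slab $\{x_1<0\}$ the kernel factors cleanly as $\max(s+t,\lvert x'-y'\rvert)^{-n}=\phi_{s+t}(x'-y')$, which again relies crucially on the supremum-norm convention; the estimate $\lVert\phi_r\rVert_{L^1(\mathbf R^{n-1})}\lesssim_n r^{-1}$ is correct (both the ball $\lvert z\rvert\le r$ and the dyadic shells $\lvert z\rvert\sim 2^kr$ each contribute $\simeq r^{-1}$); and Young's inequality followed by H\"older and the one-dimensional Hilbert integral inequality close the argument, with the bonus that the resulting constant is independent of $\kappa$. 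This route, reducing the $n$-dimensional boundary-distance estimate to a genuine one-dimensional Hilbert inequality by integrating out the transverse variables via $L^1$ control of $\phi_r$, differs from the treatment in Auscher--Routin (which proceeds via Schur-type/dyadic estimates adapted to the distance to $\partial Q$), and it is arguably the most economical option precisely because this paper already fixes the $\ell^\infty$ metric in its conventions; in any metric comparable to $\ell^\infty$ the factoring survives up to constants, so nothing essential is lost.
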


 \begin{proof}[Proof of inequality \eqref{e.goodSum}]
When expanding $ \langle T f_1, f_2 \rangle$ by using \eqref{e.representation}, there are a number of error terms. 
They are treated by the following estimates, and their duals, as applicable, which we do not directly state.   
For $ P\in \mathcal A ^{1} _{\ast }\setminus \mathcal{B}^1$, 
the cubes $P$ and $Q^0$ are roughly of the same size, so
that $\lvert \langle f_1\rangle_P\rvert \lesssim 1$
by inequality \eqref{e.f_bmo}. 
Furthermore, using the local $ Tb$ hypothesis, definition
of Type A cubes, and the Hardy inequality stated in Lemma \ref{l.hardy},
\begin{align*}
	\lvert  \langle T \beta ^{1} _{P}, f_2\rangle\rvert & \le \lvert  \langle T \beta ^{1} _{P}, f_2 \mathbf 1_{P}\rangle \rvert + 
	\lvert \langle  T \beta ^{1} _{P}, f_2 \mathbf 1_{ 6P \backslash P }\rangle\rvert
	\lesssim \{1 + \mathbf T _{\textup{loc}} + \upsilon_1 \mathbf{T}\} \lvert  Q ^0\rvert\,.  
\end{align*}
And for $ P \in \mathcal A ^{1} _{\ast }\setminus \mathcal{B}^1$, and 
$ Q\in \mathcal A ^{2} _{\ast }\setminus \mathcal{B}^2$, likewise, we have 
$	\lvert  \langle T \beta ^{1} _{P}, \beta ^{2}_Q \rangle \rvert  \lesssim  
	\{1 + \mathbf T _{\textup{loc}} + \upsilon_1 \mathbf{T} \} \lvert  Q^0\rvert\,.  
$
Next, for  a cube $P$ as above, there holds by the assumed norm inequality on $ T$, Lemma \ref{e.phi_est} and Lemma \ref{e.EB<},
\begin{align*}
\mathbb{E}\big\{  \lvert \langle T \beta_P^1, \phi^2\rangle\rvert + \lvert \langle T\phi^1, f_2\rangle\rvert  \big\}
\lesssim 
	\mathbf{T}\cdot \{ \Lambda ^{-1} +  
	\Lambda\upsilon_1 ^{-1}\cdot 2 ^{- c \epsilon r} \}\cdot  \lvert  Q ^{0}\rvert\,.
\end{align*}
Lastly, when
$\Lambda^{-1}+\Lambda\upsilon_1^{-1}2^{-c\epsilon r}<1$,  we have
$\mathbb{E}\lvert \langle T\phi^1,\phi^2\rangle\rvert
\lesssim 
\mathbf{T}\cdot \{ \Lambda ^{-1} +  
	\Lambda\upsilon_1 ^{-1}\cdot 2 ^{- c \epsilon r} \}\cdot  \lvert  Q ^{0}\rvert\,.$
When combined with
\eqref{e.f_bmo},
these inequalities---and their duals---complete the proof of \eqref{e.goodSum}. \end{proof}

The proof of Lemma~\ref{l.GB}, and the corona construction, are both complete.
\section{Useful Inequalities}\label{s.inequalities}
 \subsection{The Martingale Transform Inequalities}\label{ss.martingale}

 We recall essential tools that we will need.  
 Fix a function $ b$ supported on a dyadic\footnote{
In our applications, the underlying dyadic grid will be $\mathcal{D}^j$, $j\in \{1,2\}$.}
 cube $ S_0$, satisfying $ \int b \; dx = \lvert  S_0\rvert $ and 
$ \lVert b\rVert_{p} \le \mathbf B\lvert  S_0\rvert ^{1/p} $, where $ 1<p< \infty $ is fixed. 
We will consider a fixed but arbitrary collection $\mathcal{T}$ of
disjoint dyadic cubes inside $S_0$, the `terminal cubes'.
Let $ \mathcal Q$ be all dyadic cubes, contained in $S_0$, but not contained in any terminal cube $ T\in \mathcal T$.
We require that there is $\sigma\in(0,1)$ such that, for all $Q\in\mathcal{Q}$,
\begin{equation}\label{e.ra}
	\Bigl\lvert \int _{Q} b \; dx  \Bigr\rvert \ge 4^{-1}  \lvert  Q\rvert\, \quad \textup{and} \quad  
	\int _{Q} \lvert b\rvert ^{p}\,dx \le \sigma^{-1}  \mathbf{B}^{p}\lvert  Q\rvert\,.  
\end{equation}
For each terminal cube $ T$, we have a function $ b_T$ supported on $ T$, and  satisfying 
$\int b_T\; dx=|T|$ and $ \lVert b_T\rVert_{p} \le \mathbf{B}|T|^{1/p}$.
If the conditions above are met, then we say that
the collection, comprised of functions $b$ and $b_T$, $T\in\mathcal{T}$, is {\em admissible}.
We will not keep track of the constants $\sigma$ and $\mathbf{B}$, and the
implied constants will depend upon them.

 For $Q\in \mathcal Q$ we define 
 the \emph{(half) twisted martingale differences}
\begin{align*}
	D_{Q} ^{b} f & := 
\sum_{Q'\in \textup{ch}(Q)\setminus \mathcal T} 
	\biggl\{
	\frac {\langle f \rangle _{Q'}} {\langle b \rangle _{Q'}   } 
	-
	\frac {\langle f \rangle _{Q}} {\langle b \rangle_Q}  
	\biggr\}  \mathbf 1_{Q'} \,, 
\\ 
	\widetilde D_{Q} ^{b} f & := 
\biggl\{ \sum_{Q'\in \textup{ch}(Q)\setminus \mathcal T} 
	\frac {\langle f \rangle _{Q'}} {\langle b \rangle _{Q'}   } \mathbf{1}_{Q'} \biggr\}
	-
	\frac {\langle f \rangle _{Q}} {\langle b \rangle_Q}  \mathbf{1}_Q
	 \,, 
\\ 
\Delta _{Q}  ^{b} f & := 
		\sum_{Q'\in \textup{ch}(Q)} 
	\biggl\{
	\frac {\langle f \rangle _{Q'}} {\langle b_{Q'} \rangle _{Q'} } b _{Q'}
	-
	\frac {\langle f \rangle _{Q}} {\langle b \rangle_Q}  b 
	\biggr\}  \mathbf 1_{Q'} \,, 
\end{align*}
where we set $ b _{Q'} = b$ if $ Q'\not\in \mathcal T$ and otherwise, 
$ b_{Q'}$ is defined as above. 

The following theorem  is proved in \cite{1011.1747}*{Lemma 5.3} and  \cite{lv-perfect}*{Section 2}.

\begin{theorem}\label{t.mt} 
 Suppose that $b$ and $b_T$, $T\in\mathcal{T}$, constitutes
an admissible collection. Then, the following inequalities hold for all selections of constants $ \lvert  \varepsilon _Q\rvert \le1$
indexed by $Q\in \mathcal{Q}$:
	\begin{equation}
	\begin{split}
		&\Bigl\lVert  \sum_{Q\in \mathcal Q} \varepsilon _Q  \widetilde D _Q ^{b} f \Bigr\rVert_{q} + \Bigl\lVert  \sum_{Q\in \mathcal Q} \varepsilon _Q  D _Q ^{b} f \Bigr\rVert_{q} & \lesssim   \lVert f\rVert_{q} 
	\,, \qquad f\in L^q,\quad1< q < \infty \,,
	\\
	&\Bigl\lVert  \sum_{Q\in \mathcal Q} \varepsilon _Q \Delta _Q ^{b} f \Bigr\rVert_{p}  \lesssim   \lVert f\rVert_{p}\,, \quad f\in L^p\,,
\end{split}
\end{equation}
where $1<p<\infty$ is the exponent associated with the 
admissible function $b$.
\end{theorem}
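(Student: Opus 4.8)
The plan is to reduce all three inequalities to a single mechanism: each of the operators $\widetilde D_Q^b$, $D_Q^b$, $\Delta_Q^b$ applied to $f$ can be rewritten, after multiplying through by the scalar $\langle b\rangle_Q$-type denominators, as a genuine martingale transform of the function $bf$ (or of $f$ itself near the terminal cubes) with uniformly bounded multipliers. More precisely, I would first treat the half-twisted difference $\widetilde D_Q^b f$. Setting $a_Q := \langle f\rangle_Q/\langle b\rangle_Q$ for $Q\in\mathcal Q$, the telescoping structure shows $\sum_{Q\in\mathcal Q}\varepsilon_Q\widetilde D_Q^b f = \sum_{Q'}\gamma_{Q'} a_{Q'}\mathbf 1_{Q'}$ for coefficients $\gamma_{Q'}$ that are finite sums of the $\varepsilon$'s along a chain; the point is that $\sum_{Q\in\mathcal Q}\widetilde D_Q^b f$ itself telescopes to $\sum_{T\in\mathcal T}a_{\pi T}\mathbf 1_T$ plus the contribution on $S_0\setminus\bigcup\mathcal T$. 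The lower bound $|\langle b\rangle_Q|\ge 4^{-1}|Q|$ from \eqref{e.ra} makes $|a_Q|\lesssim \langle |f|\rangle_Q$, so the family $\{a_Q\mathbf 1_{Q'}\}$ is dominated by the dyadic maximal function of $f$; combined with the square-function/Burkholder estimate for martingale transforms with bounded multipliers (as in \cite{MR1108183,MR1108183}), one gets the $L^q$ bound $\lesssim\|f\|_q$ for $1<q<\infty$. The bound for $D_Q^b f$ is essentially the same computation, since $D_Q^b f$ is the "purely oscillatory" part of $\widetilde D_Q^b f$; one subtracts off the terms living on children in $\mathcal T$, which only improves matters.

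For the twisted difference $\Delta_Q^b f$, the idea is to factor $\Delta_Q^b f = b\cdot(\text{half-twisted piece away from }\mathcal T) + (\text{terminal correction involving }b_T)$. Concretely, writing $\Delta_Q^b f = \sum_{Q'\in\mathrm{ch}(Q)\setminus\mathcal T}\{a_{Q'}-a_Q\}b\,\mathbf 1_{Q'} + \sum_{Q'\in\mathrm{ch}(Q)\cap\mathcal T}\{a_{Q'}^{(T)} b_{Q'} - a_Q b\}\mathbf 1_{Q'}$, where $a_{Q'}^{(T)}=\langle f\rangle_{Q'}/\langle b_{Q'}\rangle_{Q'}$. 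Summing over $Q\in\mathcal Q$, the first type of term is exactly $b\cdot\sum_Q\varepsilon_Q D_Q^b f$, whose $L^p$ norm is controlled by $\|b\|_\infty$-type quantities — but $b$ is only in $L^p$, not $L^\infty$, so here one must instead argue pointwise: on each maximal terminal cube or on the "good" region, $|\sum_Q D_Q^b f|\lesssim M_{\mathcal Q}f$ (a localized maximal function), and then $\|b\cdot M_{\mathcal Q}f\|_p\lesssim \|b\|_p\,\|f\|_\infty$... which again is the wrong normalization. The correct route, which I would follow, is the one standard in this literature: use that $\sum_{Q\in\mathcal Q}\varepsilon_Q\Delta_Q^b f$ telescopes so that its restriction to any terminal cube $T$ equals $a_{\pi T}^{(T)} b_T\mathbf 1_T$ minus a comparable multiple of $b\mathbf 1_T$, with $|a_{\pi T}^{(T)}|\lesssim 1$ by the accretivity of $b_T$ (its $L^p$ bound and $\int b_T=|T|$) and $|a_{\pi T}|\lesssim\langle|f|\rangle_{\pi T}$; and on $S_0\setminus\bigcup\mathcal T$ it reduces to $b$ times a bounded-multiplier martingale transform of $f$. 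Then apply the vector-valued / square-function martingale transform inequality directly to the sequence $\{\Delta_Q^b f\}$ viewed as martingale differences adapted to $\mathcal Q\cup\mathcal T$, using \eqref{e.ra} to keep the denominators away from zero; the $L^p$ square function of $\{\Delta_Q^b f\}$ is then pointwise $\lesssim |b|\cdot S(f) + \sum_T |b_T|\langle|f|\rangle_{\pi T}\mathbf 1_T$, and integrating in $L^p$ and using $\|b\|_p\lesssim |S_0|^{1/p}$ together with $\|b_T\|_p\lesssim|T|^{1/p}$ and disjointness of the $T$'s yields $\lesssim\|f\|_p$.

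The main obstacle I anticipate is exactly this last point: because $b$ and the $b_T$ are merely $p$-accretive (only $L^p$, possibly unbounded), one cannot naively pull $b$ out of an $L^p$ norm, so the argument must be organized as a pointwise square-function domination followed by a single application of Hölder on $S_0$ (using $\int_{S_0}|b|^p\le\sigma^{-1}\mathbf B^p|S_0|$) rather than as a sequence of crude operator-norm estimates. Getting the bookkeeping right — identifying precisely which children lie in $\mathcal T$, checking that the telescoping does not break at the terminal cubes, and verifying that the resulting multiplier sequence is uniformly bounded so that the Burkholder–Davis martingale transform theorem applies on the filtration generated by $\mathcal Q$ — is where the real work lies; everything else is the routine reduction sketched above. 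I would remark that since this theorem is already established in \cite{1011.1747}*{Lemma 5.3} and \cite{lv-perfect}*{Section 2}, it suffices here to indicate this reduction and refer to those sources for the detailed verification.
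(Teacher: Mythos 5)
The paper does not prove Theorem~\ref{t.mt}; it explicitly refers the reader to \cite{1011.1747}*{Lemma 5.3} and \cite{lv-perfect}*{Section 2}. Your proposal also defers to those sources at the end, which matches the paper's treatment. The sketch you precede it with, however, contains genuine gaps that would need to be repaired before it could stand even as an outline.

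The central flaw is treating the (half-)twisted differences as martingale differences to which Burkholder's theorem applies directly. They are not: $\widetilde D_Q^b f$ and $D_Q^b f$ need not even be mean zero on $Q$ (for instance, if every child of $Q$ lies in $\mathcal T$, then $\widetilde D_Q^b f=-\tfrac{\langle f\rangle_Q}{\langle b\rangle_Q}\mathbf 1_Q$, and even when children survive the ratios do not telescope to zero), and $\Delta_Q^b f$, while it does integrate to zero on $Q$, is not measurable with respect to the dyadic $\sigma$-algebra generated by the children of $Q$, because it carries the factor $b$ (or $b_T$). The decomposition that the cited sources actually exploit is the algebraic identity
\begin{equation*}
D_Q^b f\cdot\mathbf 1_{Q'} \;=\; \frac{D_Qf}{\langle b\rangle_{Q'}}\,\mathbf 1_{Q'}\;-\;\frac{\langle f\rangle_Q}{\langle b\rangle_{Q'}\langle b\rangle_Q}\,D_Qb\cdot\mathbf 1_{Q'}\,,\qquad Q'\in\textup{ch}(Q)\setminus\mathcal T\,,
\end{equation*}
which splits $D_Q^b f$ into a \emph{genuine} bounded-multiplier martingale transform of $f$ (there Burkholder does apply, using $\lvert\langle b\rangle_{Q'}\rvert\ge 4^{-1}$) plus a paraproduct-type term built from $D_Qb$ and the averages of $f$; the second term is where the normalized local $L^p$ control \eqref{e.ra} on $b$ enters, and it is not handled by a pointwise maximal-function bound. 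Your claim that the family $\{a_{Q}\mathbf 1_{Q'}\}$ being dominated by $Mf$ suffices is not correct: each single-scale summand $\gamma_{Q'}a_{Q'}\mathbf 1_{Q'}$ is $\lesssim Mf$ in size, but the sum over all scales at a fixed point need not be $O(Mf)$, so no $L^q$ bound follows from that observation alone. Finally, the telescoping identity you invoke for $\sum_Q\varepsilon_Q\Delta_Q^b f$ restricted to a terminal cube $T$---collapsing to a single $b_T$ term minus a single $b$ term---only holds when all $\varepsilon_Q$ are equal; with arbitrary signs $\lvert\varepsilon_Q\rvert\le 1$ the signed partial sums do not collapse, which is precisely why a transform-type estimate (rather than a telescoping identity) is needed in the first place. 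Deferring to \cite{1011.1747} and \cite{lv-perfect} is consistent with the paper, but as written your sketch does not accurately describe what those references do, and as a stand-alone argument it does not close.
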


 We will recourse to the following theorem several times.
Aside from Theorem \ref{t.mt}, it depends upon the sparseness of the stopping tree $ \mathcal S ^{j}$.
 \begin{theorem}\label{t.twisted} 
Fix $j\in \{1,2\}$. For each cube $Q$ 
 in $\mathbf{R}^n$, and any selection of coefficients 
 $\lvert \varepsilon _P\rvert \lesssim 1 $,
 	 \begin{equation} \label{e.twisted}
 	 	 \Bigl\lVert  \sum_{\substack{P\in\mathcal{G}^j \;:\;  P\subset Q }}  \varepsilon _P
 	 	 \Delta  _{P} ^{\beta ^{j}} f_j\Bigr\rVert_{p_j}
		  \lesssim \lvert  Q\rvert ^{1/p_j} \,.  
\end{equation}
The same statement holds true also with $\Delta  _{P} ^{\beta ^{j}}$ replaced by $\Delta  _{P} ^{b^{j}}$.
\end{theorem}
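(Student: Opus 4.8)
The plan is to localise the inequality to a single maximal good cube, expand the resulting sum along the truncated stopping tree $\mathcal S^{j}$, apply the twisted martingale transform inequality of Theorem~\ref{t.mt} \emph{one corona at a time}, and recombine the pieces using the sparseness of $\mathcal S^{j}$. For the localisation, let $\mathcal M$ be the maximal cubes in $\{P\in\mathcal G^{j}\;:\;P\subseteq Q\}$; these are pairwise disjoint, and $\langle\lvert f_j\rvert\rangle_M\le 1$ for every $M\in\mathcal M$ by the normalisation and condition (2) of Lemma~\ref{l.GB}. Since $\Delta_P^{\beta^{j}}f_j$ is supported on $P$, the partial sums over distinct $M\in\mathcal M$ have pairwise disjoint supports, so
\begin{equation*}
	\Bigl\lVert\sum_{P\in\mathcal G^{j}\;:\;P\subseteq Q}\varepsilon_P\Delta_P^{\beta^{j}}f_j\Bigr\rVert_{p_j}^{p_j}=\sum_{M\in\mathcal M}\Bigl\lVert\sum_{P\in\mathcal G^{j}\;:\;P\subseteq M}\varepsilon_P\Delta_P^{\beta^{j}}f_j\Bigr\rVert_{p_j}^{p_j}\,,
\end{equation*}
and $\sum_M\lvert M\rvert\le\lvert Q\rvert$; it therefore suffices to bound the inner sum by $\lesssim\lvert M\rvert^{1/p_j}$ for a fixed $M\in\mathcal G^{j}$ with $\langle\lvert f_j\rvert\rangle_M\le 1$.

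For the corona step, first observe that by Proposition~\ref{p.D=0} and Lemma~\ref{l.typeA}(3a) the terms with $P$ bad vanish, so one may extend the $M$-sum to all $P\in\mathcal R^{j}$ with $P\subseteq M$ and split it according to $S=\pi_{\mathcal S^{j}}P$. The stopping cubes occurring---namely $\pi_{\mathcal S^{j}}M$ and those $S\in\mathcal S^{j}$ with $S\subsetneq M$---all lie in $\mathcal R^{j}$, hence are not of Type C, so $\langle\lvert f_j\rvert\rangle_S\le 1$. Writing $g_S$ for the corresponding sub-sum, I would realise $g_S$ as a twisted martingale transform in the sense of \S\ref{ss.martingale}: on $S_0=S$ take $b=\beta^{j}_S$, let the terminal cubes be $\textup{ch}_{\mathcal S^{j}}(S)$ with the functions $b_T=\beta^{j}_T$, together with the maximal cubes of $\mathcal B^{j}$ inside $S$ not contained in any $\mathcal S^{j}$-child, to which one attaches the renormalised restrictions $b_T:=\beta^{j}_S\mathbf 1_T/\langle\beta^{j}_S\rangle_T$ (legitimate because $\langle\beta^{j}_S\rangle_T\ge 1/4$ by Lemma~\ref{l.typeA}(3a)). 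With these choices the collection $\mathcal Q$ of \S\ref{ss.martingale} is exactly $\{R\in\mathcal R^{j}\;:\;\pi_{\mathcal S^{j}}R=S\}$; the admissibility conditions \eqref{e.ra} and the norm bound on $b$ follow from Lemma~\ref{l.typeA}(1),(2),(3a),(3b); and a term-by-term inspection gives $\Delta_P^{b}f_j=\Delta_P^{\beta^{j}}f_j$ for every $P\in\mathcal Q$, while $\Delta_P^{b}f_j=\Delta_P^{b}(f_j\mathbf 1_S)$ since all cubes entering the formula lie in $S$. Theorem~\ref{t.mt} then yields $\lVert g_S\rVert_{p_j}\lesssim\lVert f_j\mathbf 1_S\rVert_{p_j}$, and John--Nirenberg together with \eqref{e.f_bmo} and $\langle\lvert f_j\rvert\rangle_S\le 1$ bound the right-hand side by $\lesssim\lvert S\rvert^{1/p_j}$.

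For the summation, group the relevant $S$ into generations, $\mathcal S^{j}_0=\{\pi_{\mathcal S^{j}}M\}$ and $\mathcal S^{j}_{k+1}=\bigcup_{S\in\mathcal S^{j}_k}\textup{ch}_{\mathcal S^{j}}(S)$, and put $h_k=\sum_{S\in\mathcal S^{j}_k}g_S$, so the $M$-sum equals $\sum_{k\ge 0}h_k$. Inside a generation the cubes, and hence the supports of the $g_S$, are pairwise disjoint, so $\lVert h_k\rVert_{p_j}^{p_j}=\sum_{S\in\mathcal S^{j}_k}\lVert g_S\rVert_{p_j}^{p_j}\lesssim\sum_{S\in\mathcal S^{j}_k}\lvert S\rvert$, and the sparseness condition (3) of Definition~\ref{d.stoppingData} gives $\sum_{S\in\mathcal S^{j}_k}\lvert S\rvert\lesssim\tau^{k}\lvert M\rvert$ with $\tau<1$. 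Summing the geometric series, $\lVert\sum_k h_k\rVert_{p_j}\le\sum_k\lVert h_k\rVert_{p_j}\lesssim\lvert M\rvert^{1/p_j}$, which completes the proof. The variant with $\Delta_P^{b^{j}}$ is handled identically, with $\widetilde{\mathcal S}^{j}$ and the functions $b^{j}_S$ in place of $\mathcal S^{j}$ and $\beta^{j}_S$; there the terminal family $\textup{ch}_{\widetilde{\mathcal S}^{j}}(S)$ needs no modification and the admissibility conditions come straight from the stopping rules, namely condition (a) of the construction and \eqref{e.inf}.

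I expect the corona step to be the delicate one. The terminal data on the truncated tree must be arranged so that \emph{simultaneously} every cube of $\mathcal Q$ obeys the perturbed accretivity bound \eqref{e.ra}---which is precisely where Lemma~\ref{l.typeA}(3a)--(3b) is needed, since the naive lower bound on $\langle\beta^{j}_S\rangle$ fails for cubes buried inside $\mathcal B^{j}$---and the resulting $\Delta_P^{b}$ must coincide on the nose with $\Delta_P^{\beta^{j}}f_j$ for the cubes $P$ actually present in $g_S$; and in the summation step one must use sparseness in the quantitatively correct, generation-by-generation form so as to recover the \emph{local} factor $\lvert Q\rvert^{1/p_j}$ rather than $\lvert Q^{0}\rvert^{1/p_j}$.
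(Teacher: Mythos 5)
Your proof is correct and follows essentially the same route as the paper's: localize to maximal good cubes contained in $Q$, realize each corona $\{P:\pi_{\mathcal S^j}P=S\}$ as a twisted martingale transform via an admissible collection on $S$ so that Theorem~\ref{t.mt} applies, and sum over generations using sparseness. The only differences are cosmetic: you attach the renormalized restriction $\beta^j_S\mathbf 1_T/\langle\beta^j_S\rangle_T$ at the maximal $\mathcal B^j$-cubes, whereas the paper's Remark~\ref{r.mt} instead terminates at their dyadic children with functions $b^j_T$ (noting these play no role in the identity $\Delta_P^b=\Delta_P^{\beta^j}$), and you close the generation sum by a direct geometric series in place of the paper's $k^{-1}\cdot k$ H\"older trick; both are equally valid.
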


Before the proof of this theorem, let us make the following instructive remark.

\begin{remark}\label{r.mt}
Of particular importance in the sequel will be the following assignments.
For a fixed $S_0\in\mathcal{S}^j$ 
that is not contained in a cube in $\mathcal{B}^j$, we
set
$\mathcal{T}\subset \mathcal{D}^j$ to be  maximal cubes
in the collection
\[
\textup{ch}_{\mathcal{S}^j}(S_0) \cup \{T\,:\, T\subset S_0,\, T\in \textup{ch}(R),\,R\in\mathcal{B}^j\}\,.
\]
By construction of our perturbed stopping data, it is straight forward to verify that
the assignments $\beta := \beta_{S_0}^j$ and
\[
\beta_T := 
\begin{cases}
b_T^j \quad T\in \textup{ch}(R)\text{ for some }R\in\mathcal{B}^j\\
\beta_T^j\quad \text{otherwise}
\end{cases}
\]
yields an admissible collection, with $p=p_j$ 
and constants $\sigma\simeq \delta$ and $\mathbf{B}\simeq \mathbf{A}$.
Likewise, setting $b:= b_{S_0}^j$ and
$b_T:=b_T^j$ if $T\in\mathcal{T}$
yields admissible functions. Observe also that 
$P\in\mathcal{Q}$ if
$P\in\mathcal{G}^j$ satisfies $\pi_{\mathcal{S}^j} P =S_0$. 
Moreover, under the same assumption, 
$\Delta_P^{\beta} = \Delta_P^{\beta^j}$ and $\Delta_P^b=\Delta_P^{b^j}$.
Here, the right hand sides
are defined in \eqref{e.twist}.
Observe that the terminal functions $\beta_T$ and $b_T$ for
 $T\in \mathcal{T}\cap \textup{ch}(R)$, $R\in\mathcal{B}^j$,
do not play any role in these last identities.
\end{remark}

\begin{proof}[Proof of Theorem \ref{t.twisted}]
By considering the disjoint collection of those maximal cubes in $\mathcal{G}^j$, that are
contained in $Q$, we are
reduced to the case of $Q\in\mathcal{G}^j$.
By Theorem \ref{t.mt}
and Remark \ref{r.mt},
we first obtain a weaker inequality. Indeed, letting $ S= \pi _{\mathcal S ^{j} } Q$, we have
	\begin{equation}\label{e:last}
 	 	 \Bigl\lVert  \sum_{\substack{P \;:\; \pi _{\mathcal S ^{j}} P = S\\ P\subset Q }} 
 	 	 \varepsilon _P	 \Delta  _{P}^{\beta^j} f_j\Bigr\rVert_{p_j} \lesssim
 	 	 \lVert   f_j \mathbf 1_{Q}\rVert_{p_j} \lesssim \lvert  Q\rvert ^{1/p_j}\,.
\end{equation}
We have the last inequality due to the construction of functions $ f_j$:---compare to inequalities in \eqref{e.zf<} and recall
the normalization of $f_j$ by $\Lambda^{-1}$.

We apply inequality \eqref{e:last} recursively for the remaining
terms, for which  $\pi_{\mathcal{S}^j} P\subsetneq Q$.
Let $ \mathcal R_{1}$ be the maximal $ R\in \mathcal S ^{j}$ strictly contained in $Q$, 
and inductively set $ \mathcal R_{k+1}$ to be the maximal cubes $ R'\in \mathcal S ^{j}$ strictly contained in any
$ R \in \mathcal R_{k}$.  By sparsness of $\mathcal{S}^j$,
\begin{equation*}
	\sum_{R\in \mathcal R_{k+1} } \lvert  R\rvert \le \tau  \sum_{R\in \mathcal R _{k}} \lvert  R\rvert
	\le \cdots \le  \tau ^{k} \lvert  Q\rvert\,, \qquad k\ge 1\,,
\end{equation*}
where $ 0< \tau < 1$.
Thus, setting $ \phi  _{k} := \sum_{R\in \mathcal R _{k}}\sum_{P \;:\; \pi _{\mathcal S ^{j}}P=R}  \varepsilon _P\Delta_{P}^{\beta^j} f_j$, 
there holds 
\begin{align*}
	\Bigl\lVert \sum_{k=1} ^{\infty } \phi_k \Bigr\rVert_{p_j} 
	= 	\Bigl\lVert \sum_{k=1} ^{\infty } k ^{1-1} \phi_k \Bigr\rVert_{p_j} 
	 \lesssim 
	\Bigl[ \sum_{k=1} ^{\infty } k ^{-p_j'} \Bigr] ^{1/p_j'} 
	\Bigl[\sum_{k} k ^{p_j}  \lVert \phi_k\rVert_{p_j} ^{p_j} \Bigr] ^{1/p_j} \lesssim \lvert  Q\rvert ^{1/p_j} \,.  
\end{align*}
 The proof in case
of $b^j$-functions is the same.
\end{proof}

We need a variant of the $q$-universal inequality for the half-twisted differences to control several error terms that arise.
For $P\in\mathcal{G}^j$, let us define 
\begin{align}\label{e.Box}
	\Box _P^{\beta^j} f_j := \lvert  \widetilde D   _P ^{\beta ^{j}} f_j\rvert +  \widetilde \chi  _P \,,
\end{align}
where 
\begin{align}
	\label{e.sigmaQ}
	\widetilde \chi  _P := 
	\begin{cases}
	\mathbf 1_{P}  & \textup{a child of $ P$ is in $ \mathcal S ^{j}$}
	\\
	0 & \textup{otherwise}
 	\end{cases}
	\end{align}
The functions $\Box _P^{b^j} f_j$ are defined analogously.
If the applied function $\beta^j$ or $b^j$ is clear from the context, we omit the superscripts.
Now, the following $q$-universal inequality is a 
 consequence of sparsness of the stopping cubes $ \mathcal S ^{j}$ and 
the half-twisted inequality, Theorem \ref{t.mt},
\begin{equation}\label{e.Box<}
\Bigl\lVert 
\Bigl[
\sum_{\substack{P\in\mathcal{G}^j \;:\;  P\subset Q }}   
\lvert \Box  _{P}^{\beta^j} f_j \rvert ^2 
\Bigr] ^{1/2} 
\Bigr\rVert_{q}
\lesssim \lvert  Q\rvert ^{1/q}\,, \qquad 1< q < \infty  \,. 
\end{equation}
Here $Q$ is any cube in $\mathbf{R}^n$, and the corresponding inequality
is also true if we use $b^j$-functions.
For further details concerning the proof of \eqref{e.Box<}, we refer to \cite{lv-perfect}*{Section 5}.

 \subsection{An Estimate for Perturbations of $ b$}

 For a later discussion of the diagonal term in \S \ref{s.diagonal}, we
 need novel {\em perturbation inequalities} for the
twisted martingale differences.  
 We will first formulate and prove general statements, and only afterwards specialize to
 our setting.

 Let $ S_0$ be a dyadic cube, and $ \mathcal T$ be a collection of disjoint dyadic subcubes of $ S_0$. Let
$ \mathcal Q$ be the collection of all dyadic subcubes of $ S_0$ which are not contained in any $ T\in \mathcal T$. We suppose
 there are two admissible collections of functions\footnote{With exponent
 $p$ and constants $\sigma,\mathbf{B}$}: $b$ and $\beta$, and the corresponding
 terminal functions $b_T$ and $\beta_T$ for each $T\in\mathcal{T}$;
 for the definitions, we refer to \S \ref{ss.martingale}.
  Assume further that for all $ Q \in \mathcal Q$
  and $T\in\mathcal{T}$ there holds, for a fixed $ 0< \upsilon  < 8^{-1}$, 
 \begin{equation}\label{e:close}
 	 \int _{Q} \lvert b- \beta  \rvert ^{p} \; dx \le \upsilon^p \lvert  Q\rvert\,,\qquad
	 \int_T \lvert b_T-\beta_T\rvert^p \le \upsilon^p \lvert T\rvert\,.
 	 \end{equation}
These conditions say that $ b$ and $ \beta $, and
the corresponding terminal functions, are `close'.   
 
\begin{theorem}\label{l.perturb} 
Suppose inequalities \eqref{e:close} hold for some
$0<\upsilon<8^{-1}$, and $f\in L^1_{\textup{loc}}$ satisfies
$\lvert\langle f\rangle_Q\rvert \le \lambda$ for every cube $Q\in\mathcal{Q}\cup \mathcal{T}$.
Then, 
we have the following `perturbation inequality' 
	\begin{equation} \label{e.perturb}
\Bigl\lVert 
\Bigl[
\sum_{Q\in \mathcal{Q}} 
\bigl\lvert \bigl\{ \Delta ^{\beta} _Q - \Delta ^{b}_Q \bigr\}  f\bigr\rvert^2
\Bigr]^{1/2}
\Bigr\rVert_{p} \lesssim \upsilon\cdot 
\big\{ \lVert f\cdot \mathbf{1}_{S_0} \rVert_p + 
\lambda \lvert  S_0\rvert ^{1/p}\big\}\,. 
\end{equation}
Here the exponent $p$ is the one associated with functions $b$ and $\beta$, and the implied constant depends upon $n,p,\sigma,\mathbf{B}$.
\end{theorem}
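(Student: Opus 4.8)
The plan is to expand the difference $\{\Delta_Q^\beta - \Delta_Q^b\}f$ cube by cube into a small number of terms, each of which is controlled by the `closeness' hypothesis \eqref{e:close} together with the martingale transform inequalities of Theorem~\ref{t.mt}. Fix $Q\in\mathcal{Q}$ and a child $Q'\in\textup{ch}(Q)$. From the definitions in \S\ref{ss.martingale} we have
\[
\{\Delta_Q^\beta - \Delta_Q^b\}f \,\mathbf 1_{Q'} = \Bigl(\frac{\langle f\rangle_{Q'}}{\langle \beta_{Q'}\rangle_{Q'}}\beta_{Q'} - \frac{\langle f\rangle_{Q'}}{\langle b_{Q'}\rangle_{Q'}}b_{Q'}\Bigr)\mathbf 1_{Q'} - \Bigl(\frac{\langle f\rangle_{Q}}{\langle \beta\rangle_{Q}}\beta - \frac{\langle f\rangle_{Q}}{\langle b\rangle_{Q}}b\Bigr)\mathbf 1_{Q'}.
\]
Each parenthesised expression I would split, by adding and subtracting a mixed term such as $\langle f\rangle_{Q'}\langle\beta_{Q'}\rangle_{Q'}^{-1}b_{Q'}$, into (i) a piece of the form $\langle f\rangle\,\langle\beta\rangle^{-1}(\beta - b)$ and (ii) a piece of the form $\langle f\rangle\,\beta\,(\langle\beta\rangle^{-1}-\langle b\rangle^{-1}) = \langle f\rangle\,\beta\,\langle\beta\rangle^{-1}\langle b\rangle^{-1}(\langle b\rangle - \langle\beta\rangle)$. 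The denominators $\langle\beta_{Q'}\rangle_{Q'}$, $\langle b_{Q'}\rangle_{Q'}$, $\langle\beta\rangle_Q$, $\langle b\rangle_Q$ are all bounded below by $4^{-1}$ by the accretivity condition \eqref{e.ra} (which holds for both collections), $|\langle f\rangle|\le\lambda$ on every relevant cube by hypothesis, and $|\langle b\rangle_Q - \langle\beta\rangle_Q|\le \langle|b-\beta|^p\rangle_Q^{1/p}\le\upsilon$ by \eqref{e:close} and Hölder; similarly on $Q'$. So all the scalar prefactors are $O(\lambda)$ or $O(\lambda\upsilon)$.

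After this bookkeeping, $\{\Delta_Q^\beta-\Delta_Q^b\}f$ is a sum of a bounded number of terms, each having one of two shapes: a `martingale-transform' shape $\varepsilon_Q\Delta_Q^{b}(\text{something})$ or $\varepsilon_Q\Delta_Q^{\beta}(\text{something})$ with $|\varepsilon_Q|\lesssim 1$ applied to an auxiliary function; and a genuinely new shape supported on $Q$ built directly from $b-\beta$. Concretely, collecting the `$\langle f\rangle(\beta-b)$' contributions over $Q$ and its children, and using that $\langle f\rangle_{Q'}-\langle f\rangle_Q$ telescopes, the dangerous-looking part reorganizes into $(b-\beta)$ times a sum of twisted/ordinary martingale-difference-type coefficients of $f$; the remaining part reorganizes into an honest twisted martingale difference $\Delta_Q^{b}g$ (or $\Delta_Q^\beta g$) of the auxiliary function $g := (b-\beta)$ or of a truncation of $f$ scaled by closeness. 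For the first (square-function) part I would pass to the pointwise bound $\bigl[\sum_Q |(b-\beta)\mathbf 1_{Q}|^2\cdot(\text{coeff})^2\bigr]^{1/2}$; since for each $x$ the cubes $Q\ni x$ in $\mathcal Q$ are nested, at each scale only the relevant $\beta_{\pi_{\mathcal S}}$ contributes and, using that the coefficients are the telescoping martingale coefficients of $f$ (each $\lesssim\lambda$ by the John--Nirenberg / BMO control implicit in the hypothesis $|\langle f\rangle|\le\lambda$), one gets a pointwise bound of the form $\lesssim \lambda\sum_{S}\mathbf 1_S |b-\beta|$ — but more efficiently I would keep it as a square function and invoke the $q$-universal half-twisted estimate \eqref{e.Box<} type bound together with \eqref{e:close}. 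For the second part, Theorem~\ref{t.mt} gives $\bigl\|\bigl[\sum_Q|\Delta_Q^{b}g|^2\bigr]^{1/2}\bigr\|_p\lesssim\|g\|_p$ (the $L^p$ square-function form of the martingale transform inequality, which follows from the randomized $\pm1$ version stated there by Khintchine), and $\|g\|_p = \|b-\beta\|_{L^p(S_0)}\lesssim\upsilon|S_0|^{1/p}$ by \eqref{e:close} summed over a partition of $S_0$ into cubes of $\mathcal Q\cup\mathcal T$; likewise the terminal contributions from $T\in\mathcal T$ are handled by the second inequality in \eqref{e:close}.

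The main obstacle I anticipate is the first part: producing the factor $\upsilon$ while only paying $\|f\cdot\mathbf 1_{S_0}\|_p + \lambda|S_0|^{1/p}$, rather than something like $\|Mf\|_p$ or a logarithmically-divergent sum over scales. The point is that the coefficient multiplying $(b-\beta)\mathbf 1_{Q}$ at scale $Q$ is exactly a difference of consecutive averages $\langle f\rangle_{Q'}-\langle f\rangle_Q$ divided by bounded quantities, i.e.\ a classical martingale difference coefficient of $f$ (plus a bounded-below denominator), so the square function $\bigl[\sum_Q|\langle f\rangle_{Q'}-\langle f\rangle_Q|^2\mathbf 1_{Q'}\bigr]^{1/2}$ is $S(f)$ in the $L^p$-bounded Littlewood--Paley sense, and one should combine it with $\|b-\beta\|_\infty$-type control — but $b-\beta$ is only in $L^p$, not $L^\infty$. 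The resolution is to \emph{not} separate variables: instead bound $\bigl[\sum_Q |\{\Delta_Q^\beta-\Delta_Q^b\}f|^2\bigr]^{1/2}$ directly by interpolating, via Theorem~\ref{t.mt} applied with the coefficient sequence absorbed, between the $\upsilon=0$ trivial bound and a crude bound, OR — cleaner — observe that each term is of the form $\varepsilon_Q \Delta_Q^{\star}(h)$ with $\|h\|_p\lesssim \upsilon(\|f\mathbf 1_{S_0}\|_p+\lambda|S_0|^{1/p})$ for an appropriate auxiliary $h$ built from $f$ and $b-\beta$ together (for instance $h = f\cdot\frac{b-\beta}{\langle\beta\rangle}$-type products, whose $L^p$ norm is $\lesssim\upsilon$ times what we want because $b-\beta$ contributes the $\upsilon$ and $f$'s averages are $\lesssim\lambda$ on the relevant cubes while its oscillation is captured by $\|f\mathbf 1_{S_0}\|_p$), so that a single application of the $L^p$ square-function martingale transform inequality from Theorem~\ref{t.mt} closes the estimate. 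I would structure the write-up around exhibiting these auxiliary functions explicitly, verifying the two required properties (bounded $\varepsilon_Q$, and $\|h\|_p\lesssim\upsilon(\|f\mathbf 1_{S_0}\|_p+\lambda|S_0|^{1/p})$), and then quoting Theorem~\ref{t.mt}.
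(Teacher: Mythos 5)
You correctly reduce the problem to the decomposition $\{\Delta_Q^\beta-\Delta_Q^b\}f = (\textup{martingale coefficient of }f)\cdot(\beta-b) + (\textup{small scalar})\cdot(\textup{benign martingale piece}) + (\textup{terminal terms})$, and you explicitly flag the real difficulty: the first shape is a pointwise product of an $L^p$ square function $Sf$ and an $L^p$-small function $\beta-b$, with neither factor in $L^\infty$, so naive H\"older or separation of variables fails. This diagnosis is right, and matches the structure of the paper's argument, which first proves a half-twisted version (Lemma~\ref{l.perturbHalf}, via a geometric-series expansion of $\langle\beta\rangle_Q^{-1}-\langle b\rangle_Q^{-1}$ that your first-order factorization $\langle\beta\rangle^{-1}\langle b\rangle^{-1}(\langle b\rangle-\langle\beta\rangle)$ replaces in an essentially equivalent way) and then passes to the full twisted difference by writing $\Delta_Q^\beta f - \Delta_Q^b f = \{D_Q^\beta f-D_Q^b f\}\cdot b + D_Q^\beta f\cdot(\beta-b) + (\textup{terminal terms})$.

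However, your proposed resolution of the flagged obstacle does not close the gap. Route (a), ``interpolating between the $\upsilon=0$ bound and a crude bound,'' is not an argument; there is no parameter family to interpolate in, since $\upsilon$ enters through a hypothesis on $\beta-b$, not as an exponent or a weight. Route (b) asserts that each term has the form $\varepsilon_Q\Delta_Q^{\star}(h)$ for an auxiliary $h$ of the type $f\cdot(b-\beta)/\langle\beta\rangle$, but this is not true for the troublesome term $D_Q^\beta f\cdot(\beta-b)$: applying $\Delta_Q^{\star}$ (or $D_Q^{\star}$) to the \emph{product} $f\cdot(b-\beta)$ produces averages of the product $\langle f\cdot(b-\beta)\rangle_{Q'}$ on children, which is structurally different from $\langle f\rangle_{Q'}\cdot(b-\beta)$ restricted to $Q'$. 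The term in question is a paraproduct-type object in the pair $(f,\,b-\beta)$, not a twisted martingale transform of a single function, so Theorem~\ref{t.mt} cannot be invoked directly the way you describe.

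The mechanism the paper actually uses, and which is missing from your write-up, is a distribution-function (layer-cake) comparison. Writing $Sf:=\bigl[\sum_{Q\in\mathcal Q}|D_Q^\beta(f\mathbf 1_{S_0})|^2\bigr]^{1/2}$ and $E_t:=\{Sf\ge t\}\subset S_0$, one observes that $Sf$ is constant on each terminal cube $T\in\mathcal T$, and hence $E_t$ is a union of cubes from $\mathcal Q\cup\mathcal T$. On such a union the measure $|\beta-b|^p\,dx$ is comparable to $\upsilon^p\,dx$ by \eqref{e:close} and Lebesgue differentiation (so $\int_{E_t}|\beta-b|^p\,dx\lesssim\upsilon^p|E_t|$). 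Then
\[
\int_{S_0}|Sf|^p\,|\beta-b|^p\,dx = p\int_0^\infty t^{p-1}\int_{E_t}|\beta-b|^p\,dx\,dt \lesssim \upsilon^p\int_{S_0}|Sf|^p\,dx \lesssim \upsilon^p\lVert f\mathbf 1_{S_0}\rVert_p^p\,,
\]
the last step by the martingale transform inequality of Theorem~\ref{t.mt}. The same layer-cake trick handles $\{D_Q^\beta f - D_Q^b f\}\cdot b$ once Lemma~\ref{l.perturbHalf} is known. Without this device (or an equivalent one), the factor $\upsilon$ cannot be extracted from the product term with only an $L^p$ hypothesis on $\beta-b$; this is precisely the step your proposal leaves open.
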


This section is devoted to the proof, which is a variant of known techniques \cite{1011.1747,lv-perfect}.  
The proof relies on the crucial martingale transform inequality. The main lemma follows.

\begin{lemma}\label{l.perturbHalf}  
Suppose that $\upsilon$ and $f$ are as in Theorem \ref{l.perturb}.
	Then, we have the inequality
	\begin{align} \label{e.perturbHalf}
\Bigl\lVert 
\Bigl[
\sum_{Q\in \mathcal{Q}} 
\bigl\lvert \big\{ D_Q^{\beta} - D_Q^b\big\} f\bigr\rvert^2
\Bigr]^{1/2}
\Bigr\rVert_{p} \lesssim \upsilon\cdot 
\big\{ \lVert f\cdot \mathbf{1}_{S_0} \rVert_p + 
\lambda \lvert  S_0\rvert ^{1/p}\big\}\,.
\end{align}
\end{lemma}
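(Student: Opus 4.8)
The plan is to expand the difference $\{D_Q^\beta - D_Q^b\}f$ into a handful of explicit error pieces, each of which is a scalar coefficient times an indicator $\mathbf 1_{Q'}$ (over children $Q'\in\textup{ch}(Q)\setminus\mathcal T$), and then recognize each family of pieces, summed over $Q\in\mathcal Q$, as (the square function of) a \emph{martingale transform} of an $L^p$ function to which Theorem \ref{t.mt} applies. Writing the half-twisted differences out, for a child $Q'$ of $Q$ we have
\begin{align*}
\{D_Q^\beta - D_Q^b\}f\big|_{Q'}
&= \langle f\rangle_{Q'}\Bigl(\tfrac{1}{\langle\beta_{Q'}\rangle_{Q'}}-\tfrac{1}{\langle b_{Q'}\rangle_{Q'}}\Bigr)
- \langle f\rangle_{Q}\Bigl(\tfrac{1}{\langle\beta\rangle_{Q}}-\tfrac{1}{\langle b\rangle_{Q}}\Bigr),
\end{align*}
where I abbreviate $\beta_{Q'}=\beta$, $b_{Q'}=b$ when $Q'\notin\mathcal T$. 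The elementary identity $\tfrac1x-\tfrac1y=\tfrac{y-x}{xy}$, together with the lower bounds $|\langle\beta\rangle_Q|,|\langle b\rangle_Q|\ge 4^{-1}$ from \eqref{e.ra} (and the analogous bound for $\beta,b$ on the admissible collection, valid since $\upsilon<8^{-1}$ keeps $\langle\beta\rangle_Q$ away from zero), shows that each factor $\tfrac{1}{\langle\beta\rangle_{Q'}}-\tfrac{1}{\langle b\rangle_{Q'}}$ is bounded in absolute value by $16\,|\langle b-\beta\rangle_{Q'}|\le 16\langle|b-\beta|\rangle_{Q'}$. I would set $\gamma_{R}:=\tfrac{1}{\langle\beta\rangle_R}-\tfrac{1}{\langle b\rangle_R}$ for a cube $R$, so $|\gamma_R|\lesssim \langle|b-\beta|\rangle_R$.

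Next I would split the sum $\sum_{Q\in\mathcal Q}|\{D_Q^\beta-D_Q^b\}f|^2$ into two contributions via the triangle inequality in $\ell^2$: the ``child'' part $\mathrm{I}:=\bigl(\sum_{Q}\sum_{Q'\in\textup{ch}(Q)\setminus\mathcal T}|\langle f\rangle_{Q'}\gamma_{Q'}|^2\mathbf 1_{Q'}\bigr)^{1/2}$ and the ``parent'' part $\mathrm{II}:=\bigl(\sum_{Q}|\langle f\rangle_{Q}\gamma_{Q}|^2\mathbf 1_{Q}\bigr)^{1/2}$; it suffices to bound $\|\mathrm{I}\|_p$ and $\|\mathrm{II}\|_p$ each by the right-hand side of \eqref{e.perturbHalf}. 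For $\mathrm{II}$, since the cubes $Q\in\mathcal Q$ at a fixed scale are disjoint, $\mathrm{II}^2=\sum_Q|\langle f\rangle_Q|^2|\gamma_Q|^2\mathbf 1_Q$ is a sum over generations; bounding $|\langle f\rangle_Q|\le\lambda$ and $|\gamma_Q|\lesssim\langle|b-\beta|\rangle_Q\le\langle|b-\beta|^p\rangle_Q^{1/p}$, and then using $\langle|b-\beta|^p\rangle_Q\le\upsilon^p$ from \eqref{e:close} for $Q\in\mathcal Q$, I get $\mathrm{II}(x)\lesssim\lambda\upsilon\bigl(\sum_{Q\ni x}1\bigr)^{1/2}$ with the count running over the finitely many cubes of $\mathcal Q$ containing $x$; a direct way to control the resulting $L^p$ norm is to dominate $|\langle f\rangle_Q|^2|\gamma_Q|^2\mathbf 1_Q\lesssim \upsilon\lambda\cdot|\langle f\rangle_Q\gamma_Q|\mathbf 1_Q$ and recognize $\sum_Q\langle f\rangle_Q\gamma_Q\mathbf 1_Q$ — minus its obvious telescoping tail — as a martingale transform of $f\mathbf 1_{S_0}+\lambda\mathbf 1_{S_0}$, or more simply to square-function it against Theorem \ref{t.mt} after absorbing one factor $\upsilon$; I would in fact handle $\mathrm{I}$ and $\mathrm{II}$ uniformly by writing both as $\mathbf 1$-coefficient martingale transforms. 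For $\mathrm{I}$, note $\sum_{Q'\in\textup{ch}(Q)\setminus\mathcal T}$ over all $Q\in\mathcal Q$ just reindexes over $Q'\in\mathcal Q\cup(\mathcal T\cap\text{descendants})$ with $|\langle f\rangle_{Q'}|\le\lambda$ and $|\gamma_{Q'}|\lesssim\langle|b-\beta|^p\rangle_{Q'}^{1/p}\le\upsilon$ by \eqref{e:close}, so the same argument applies.

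The key device, which I would state as the crux, is this: a sum $\sum_{R} c_R\mathbf 1_R$ where $R$ ranges over a collection of dyadic cubes with the property that at each scale they are disjoint, and $|c_R|\le \upsilon\lambda$, is controlled in $L^p$ by $\upsilon\lambda|S_0|^{1/p}$ times the number of generations only if that number is bounded — it is not, so I must instead exploit cancellation. The correct move is: each term $\langle f\rangle_{Q'}\gamma_{Q'}$ or $\langle f\rangle_Q\gamma_Q$ is itself of the form $\langle g\rangle_{R}$ for $g=(b-\beta)\mathbf 1_{S_0}$ up to the bounded factors $\langle f\rangle$ and the reciprocals $1/(\langle\beta\rangle\langle b\rangle)$, which are \emph{not} constant but are bounded; so write $\gamma_R=\langle b-\beta\rangle_R\cdot m_R$ with $|m_R|\le 16$, absorb $\langle f\rangle_R m_R$ into admissible coefficients $|\varepsilon_R|\lesssim\lambda$, and observe that $\sum_R\langle b-\beta\rangle_R\mathbf 1_{Q'(R)} $ (children restriction) minus its tail is precisely a classical martingale transform of $(b-\beta)\mathbf 1_{S_0}$. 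Then Theorem \ref{t.mt} (or the classical Burkholder–Gundy square-function estimate it subsumes, applied to the ordinary martingale differences of $(b-\beta)\mathbf 1_{S_0}$) gives $\|\mathrm{I}\|_p+\|\mathrm{II}\|_p\lesssim\|(b-\beta)\mathbf 1_{S_0}\|_p\cdot\sup_R(|\langle f\rangle_R||m_R|)\lesssim \lambda\cdot\|(b-\beta)\mathbf 1_{S_0}\|_p$. Finally $\|(b-\beta)\mathbf 1_{S_0}\|_p^p=\sum_{Q\text{ maximal in }\mathcal Q\cup\mathcal T}\int_Q|b-\beta|^p\le\upsilon^p\sum|Q|=\upsilon^p|S_0|$ by \eqref{e:close}, which yields the stated bound (the $\|f\cdot\mathbf 1_{S_0}\|_p$ term on the right of \eqref{e.perturbHalf} enters because in the more careful accounting $\langle f\rangle_R$ is not genuinely bounded by $\lambda$ on all of $S_0$ but only on $\mathcal Q\cup\mathcal T$, so one peels off the top cube $S_0$ separately, contributing $\langle|f|\rangle_{S_0}$-type terms controlled by $\|f\mathbf 1_{S_0}\|_p|S_0|^{-1/p}$).

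The main obstacle I anticipate is bookkeeping the coupling between the two ``halves'' of the difference — the child term $\langle f\rangle_{Q'}\gamma_{Q'}$ and the parent term $\langle f\rangle_Q\gamma_Q$ — so that the combination is genuinely a martingale transform rather than two separately divergent pieces; in other words, ensuring the telescoping is respected when one replaces $\gamma$ by $\langle b-\beta\rangle$. The cleanest route is probably to first prove the estimate for $D_Q^{b}$ and $D_Q^\beta$ acting on the \emph{same} function $f$ with the observation that $\{D_Q^\beta-D_Q^b\}f$ differs from the ordinary martingale difference $D_Q h$ of the auxiliary function $h:=\sum_{R\in\mathcal Q\cup\mathcal T}\langle f\rangle_R\,\gamma_R\,\mathbf 1_R$ (suitably defined so that its generation-$Q$ average is $\langle f\rangle_Q\gamma_Q$) only through harmless terminal-cube corrections and a uniformly bounded multiplicative error, and then invoke the classical square-function bound $\|(\sum_Q|D_Qh|^2)^{1/2}\|_p\lesssim\|h\|_p$ together with $\|h\|_p\lesssim\lambda\|(b-\beta)\mathbf 1_{S_0}\|_p\lesssim\lambda\upsilon|S_0|^{1/p}$, where the pointwise bound $|h|\le 16\lambda\,M\bigl(|b-\beta|\mathbf 1_{S_0}\bigr)$ and the $L^p$-boundedness of the dyadic maximal function $M$ close the estimate.
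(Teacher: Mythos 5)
Your proposal identifies the right target — a martingale-transform bound involving $b-\beta$ — and you correctly flag the danger that splitting $\{D_Q^\beta - D_Q^b\}f$ into a ``child part'' $\mathrm{I}$ and a ``parent part'' $\mathrm{II}$ yields two pieces that are each pointwise divergent (since $\sum_{Q\ni x}|\langle f\rangle_Q\gamma_Q|^2$ has infinitely many terms bounded away from zero wherever $b\ne\beta$, by Lebesgue differentiation). But the fix you propose does not close this gap. The auxiliary function $h$ ``suitably defined so that its generation-$Q$ average is $\langle f\rangle_Q\gamma_Q$'' does not exist: the quantities $\alpha_Q:=\langle f\rangle_Q\gamma_Q$ depend nonlinearly on the averages of $\beta$ and $b$, hence fail the consistency relation $\alpha_Q=|Q|^{-1}\sum_{Q'\in\textup{ch}(Q)}|Q'|\alpha_{Q'}$ that any sequence of dyadic averages of a function must satisfy. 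Without $h$, the appeal to the classical square-function bound $\|(\sum_Q|D_Qh|^2)^{1/2}\|_p\lesssim\|h\|_p$ and the pointwise bound $|h|\le 16\lambda M(|b-\beta|\mathbf 1_{S_0})$ has nothing to act on. Likewise, writing $\gamma_R=\langle b-\beta\rangle_R\cdot m_R$ and ``absorbing $\langle f\rangle_R m_R$ into admissible coefficients'' does not produce a genuine martingale transform $\sum_R\varepsilon_R D_R(b-\beta)$, because the factors $\langle f\rangle_{Q'}m_{Q'}$ change from child to child; the cross term $\{\langle f\rangle_{Q'}m_{Q'}-\langle f\rangle_Qm_Q\}\langle b-\beta\rangle_Q$ is exactly the part you need to control and it is not addressed.

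The device the paper uses, and which is missing from your argument, is to expand $1/\langle b\rangle_Q=1/\langle\beta\rangle_Q\cdot(1+\beta_{1,Q})^{-1}$ as a Neumann series in $\beta_{1,Q}:=\langle b-\beta\rangle_Q/\langle\beta\rangle_Q$, whose powers are geometrically small by \eqref{e.ra} and \eqref{e:close}. For each power $k$ one rewrites the $Q'$--$Q$ difference as
\begin{equation}
\beta_{k,Q}\,D_Q^\beta f\big|_{Q'} \;+\; \frac{\langle f\rangle_{Q'}}{\langle\beta\rangle_{Q'}}\{\beta_{k,Q'}-\beta_{k,Q}\}\,,
\end{equation}
and controls the second term by the mean value theorem together with the crucial algebraic identity $(\beta_{1,Q'}-\beta_{1,Q})\mathbf 1_{Q'}=D_Q^\beta(b-\beta)\,\mathbf 1_{Q'}$. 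Summing the geometric series in $k$ produces the pointwise bound $|\{D_Q^\beta-D_Q^b\}f|\lesssim\upsilon\,|D_Q^\beta(f\mathbf 1_{S_0})|+\lambda\,|D_Q^\beta((b-\beta)\mathbf 1_{S_0})|$, at which point Theorem \ref{t.mt} (for the half-twisted differences $D_Q^\beta$, with $q=p$) together with \eqref{e:close} finishes the proof. It is precisely this identification of the linearized remainder with a \emph{twisted} martingale difference of $b-\beta$ — not an ordinary one — that makes the cancellation explicit; your proposal does not carry this out, so as written it has a genuine gap.
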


\begin{proof}
We begin with preparations.
Fix $Q\in \mathcal{Q}$ and $Q'\in\textup{ch}(Q)\setminus \mathcal{T}$. 
Set $ \widetilde \beta = b - \beta $, 
and 
write $\beta_{k,Q} := (\langle \widetilde \beta\rangle_Q/\langle \beta\rangle_Q)^k$.
Define $\beta_{k,Q'}$ analogously.
Observe that the following inequalities hold for every $k\ge 1$:
\begin{equation}\label{e.control}
\lvert\beta_{k,Q'}\rvert + \lvert\beta_{k,Q}\rvert \le 2\cdot (4\upsilon)^k,\qquad
|\beta_{k,Q'}-\beta_{k,Q}\rvert \le \lvert \beta_{1,Q'} - \beta_{1,Q}\rvert\cdot k \cdot (8\upsilon)^{k-1}\,.
\end{equation}
Indeed, these follow from inequalities \eqref{e.ra} and \eqref{e:close},
and the fact that $Q,Q'\in\mathcal{Q}$. For the latter
inequality above, one also applies the mean value theorem. 

Then we write
\begin{align}
	\frac {1} {\langle \beta  \rangle_Q} -\frac {1} {\langle b   \rangle_Q}   
	&= 	\frac {1} {\langle \beta  \rangle_Q}\Bigl\{
	1 - \frac {\langle \beta  \rangle_Q} {\langle \beta  \rangle_Q +\langle \widetilde \beta  \rangle_Q }
	\Bigr\}\
	  \label{e.suM}
		=\frac {1} {\langle \beta  \rangle_Q} 
		\sum_{k=1} ^{\infty } (-1) ^{k+1} \beta_{k,Q} \,.  
\end{align}
Using the same expansion with $Q$ replaced by $Q'$ yields
inequality
\[
\bigl\lvert \bigl\{ D_Q^{\beta}- D_Q^b \bigr\} f\,\bigr\rvert  \cdot  \mathbf{1}_{Q'}\le 
\sum_{k=1}^\infty \biggl\lvert
\frac{\langle f\rangle_{Q'}}{\langle \beta \rangle_{Q'}} \beta_{k,Q'} - 
\frac{\langle f\rangle_{Q}}{\langle \beta \rangle_{Q}} \beta_{k,Q} \biggr\rvert \cdot \mathbf{1}_{Q'}\,.
\]
Then, for a fixed $k$, write the summand on the right hand side as
\begin{align*}
&\biggl\lvert \beta_{k,Q} D_Q^\beta f \cdot \mathbf{1}_{Q'}+\frac{\langle f\rangle_{Q'}}{\langle \beta \rangle_{Q'}} 
\big\{ \beta_{k,Q'} - \beta_{k,Q}\big\} \cdot \mathbf{1}_{Q'}\biggr\rvert\\
&\le 2(4\upsilon)^k \cdot\lvert D_Q^\beta f\rvert  \cdot \mathbf{1}_{Q'}
+ 4\lambda\cdot k  \cdot (8\upsilon)^{k-1} \cdot   \lvert \beta_{1,Q'} - \beta_{1,Q}\rvert \cdot 
\mathbf{1}_{Q'}\,.
\end{align*}
Here we used  assumptions and inequalities \eqref{e.control}.
Observe that $\lvert \beta_{1,Q'} - \beta_{1,Q}\rvert\cdot
\mathbf{1}_{Q'} = \lvert D^\beta_Q \widetilde \beta\rvert \cdot \mathbf{1}_{Q'}$.
By summing the series over $k$, and then summing 
resulting estimates over $Q'\in\textup{ch}(Q)\setminus\mathcal{T}$,
\[
\bigl\lvert \bigl\{ D_Q^{\beta}- D_Q^b \bigr\} f\,\bigr\rvert\lesssim 
\upsilon\lvert D_Q^\beta f\rvert + \lambda \lvert D^\beta_Q \widetilde \beta\rvert
=\upsilon\lvert D_Q^\beta(f\cdot \mathbf{1}_{S_0})\rvert + \lambda \lvert D^\beta_Q(\widetilde \beta\cdot \mathbf{1}_{S_0})\rvert\,.
\]
Inequality
\eqref{e.perturbHalf} follows by using \eqref{e:close} and
universal martingale transform inequalities with $q=p$, see Theorem \ref{t.mt}.
\end{proof}

\begin{proof}[Proof of Theorem~\ref{l.perturb}]
For $Q\in\mathcal{Q}$, we write $\Delta_Q^\beta f - \Delta_Q^b f $ as 
\begin{equation}
\begin{split}
\big\{ D_Q^{\beta} f - D_Q^b f\big\}\cdot b 
+ D_Q^{\beta} f \cdot (\beta-b)
+\sum_{\substack{Q'\in\textup{ch}(Q)\cap \mathcal{T}}} 
\big\{F^1_{Q'} - F^2_{Q'} - F^3_{Q'}\,\big\}\cdot \mathbf{1}_{Q'}\,,
\end{split}
\end{equation}
where we have denoted
$F^1_{Q'} := \langle f\rangle_{Q'}\cdot \{\beta_{Q'}-b_{Q'}\}$,
\[
F^2_{Q'} := \langle f\rangle_Q\cdot \bigg\{\frac{1}{\langle \beta\rangle_Q} - \frac{1}{\langle b\rangle_Q}\bigg\}
\cdot b,\quad F^3_{Q'}:= \langle f\rangle_Q\cdot \frac{1}{\langle \beta\rangle_Q}
\big\{\beta-b\big\}\,.
\]
Having Lemma \ref{l.perturbHalf} and martingale difference
inequalities, we can proceed as in \cite{lv-perfect}*{Section 2}.
For the convenience of the reader, we briefly recall this argument here.
Let us consider the square function of $D_Q^{\beta} f \cdot (\beta-b)$ first; to this
end, we define
\[
Sf := \bigg[\sum_{Q\in\mathcal{Q}} \big \lvert D_Q^{\beta} (f\cdot \mathbf{1}_{S_0})\big\rvert^2\bigg]^{1/2}\,,
\]
and consider the events $ E _{t } := \{ \lvert  Sf \rvert \ge t  \}\subset S_0$, where  $ t >0$. 
It is important to realize
that we
	can compare Lebesgue measure estimates and estimates with respect to $ \lvert  \beta-b \rvert ^{p} \; dx  $. 
	Namely,
by inequality \eqref{e:close}, the Lebesgue differentiation theorem, and
the fact that $Sf$ is constant on terminal cubes $T\in\mathcal{T}$, we obtain:
$\int_{E_t} |\beta-b|^p\,dx \le 2^n\upsilon^p |E_t|$.
Therefore, by  the Lebesgue measure estimates in Theorem~\ref{t.mt},  
\begin{equation*}
	\int _{S_0} \lvert  Sf\rvert ^{p } \lvert  \beta-b\rvert ^{p} \; dx  
	= p\int _{ 0} ^{\infty } t ^{p-1} \int _{E _{t }} \lvert  \beta-b\rvert ^{p} \; dx  \; d t 
	\lesssim \upsilon^p\lVert f\cdot \mathbf{1}_{S_0} \rVert_p ^{p} \,. 
	\end{equation*}
The square function of $\big\{ D_Q^{\beta} f - D_Q^b f\big\}\cdot b$
is estimated analogously, using
Lemma \ref{l.perturbHalf}, which also
contributes the constant $\upsilon$. 
The remaining square
functions, associated with $F_{Q'}^i$, $i=1,2,3$, 
are estimated by using the fact  that $\mathcal{T}$ is a disjoint collection
and Lebesgue measure is doubling.
In case of $i=2$, we also use expansion \eqref{e.suM} and
first inequality  in \eqref{e.control}.
\end{proof}

We specialize the perturbation inequalities to our setting.

\begin{theorem}\label{t.perturb_special}
For $j\in \{1,2\}$ and $0<\upsilon_1<4^{-1-n}$, we have the following inequality
	\begin{equation} \label{e.diff}
\Bigl\lVert 
\Bigl[
\sum_{Q\in \mathcal{G}^j}
\bigl\lvert \bigl\{ \Delta ^{\beta^j} _Q - \Delta ^{b^j}_Q \bigr\}  f_j\bigr\rvert^2
\Bigr]^{1/2}
\Bigr\rVert_{p_j} \lesssim \upsilon_1 \lvert Q^0\rvert^{1/p_j}\,.
\end{equation}
\end{theorem}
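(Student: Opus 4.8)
The plan is to reduce \eqref{e.diff} to the abstract perturbation bound of Theorem~\ref{l.perturb}, applied separately on each stopping cube, and then to reassemble the resulting square functions by a generational (sparseness) argument in the spirit of the proof of Theorem~\ref{t.twisted}. First I would fix a cube $S_0\in\mathcal S^j$ that arises as $\pi_{\mathcal S^j}Q$ for some $Q\in\mathcal G^j$; such an $S_0$ contains a cube of $\mathcal G^j$, hence is contained in no cube of $\mathcal B^j$, so $S_0\in\mathcal R^j\cap\mathcal S^j$ and Remark~\ref{r.mt} applies: it furnishes a terminal collection $\mathcal T$, the family $\mathcal Q$ of dyadic subcubes of $S_0$ avoiding $\mathcal T$, and two admissible collections built from $b:=b^j_{S_0}$, $\beta:=\beta^j_{S_0}$ and their terminal functions, for which $\Delta^{\beta}_Q=\Delta^{\beta^j}_Q$ and $\Delta^b_Q=\Delta^{b^j}_Q$ whenever $\pi_{\mathcal S^j}Q=S_0$ (and every such $Q\in\mathcal G^j$ lies in $\mathcal Q$).

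The second step is to verify the hypotheses of Theorem~\ref{l.perturb} for this data, with $f=f_j$ and $\upsilon\simeq\upsilon_1$. The closeness condition \eqref{e:close} is, on $S_0$, just the estimate $\int_Q\lvert\widetilde\beta^j_{S_0}\rvert^{p_j}\lesssim\upsilon_1^{p_j}\lvert Q\rvert$ for $Q\in\mathcal Q$ (and $\int_T\lvert b_T-\beta_T\rvert^{p_j}\lesssim\upsilon_1^{p_j}\lvert T\rvert$ for terminal $T$, the left side being either $0$ or $\int_T\lvert\widetilde\beta^j_T\rvert^{p_j}$); these are precisely the statements that the relevant cubes fail the defining inequalities of a Type~A cube. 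The point is that every $Q\in\mathcal Q$ either lies in $\mathcal R^j$ or lies in $\mathcal B^j$ with dyadic parent in $\mathcal R^j$ (cubes of $\mathcal B^j$ being maximal of their type); the cube so produced in $\mathcal R^j$ has $\widetilde{\mathcal S}^j$-parent $S_0$ by \eqref{e.coincide} and is not of Type~A, so the failure of the \emph{first} alternative in the Type~A definition yields $\int_Q\lvert\widetilde\beta^j_{S_0}\rvert^{p_j}\le 2^n\upsilon_1^{p_j}\lvert Q\rvert$; similarly, each terminal $T\in\textup{ch}_{\mathcal S^j}(S_0)$ has dyadic parent in $\mathcal R^j$, not of Type~A, and the failure of the \emph{second} alternative (which is phrased with $\widetilde\beta^j_T$ averaged over the stopping child $T$) yields $\int_T\lvert\widetilde\beta^j_T\rvert^{p_j}\le\upsilon_1^{p_j}\lvert T\rvert$. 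The single factor $2^n$ lost in passing to a dyadic parent is absorbed by the hypothesis $\upsilon_1<4^{-1-n}$, which guarantees $\upsilon<8^{-1}$, as Theorem~\ref{l.perturb} demands. The remaining hypothesis, $\lvert\langle f_j\rangle_Q\rvert\le\lambda$ on $\mathcal Q\cup\mathcal T$ with $\lambda\lesssim1$, holds because each such cube (or its dyadic parent) lies in $\mathcal R^j$, hence is not of Type~C, and $f_j$ has been normalized by $\Lambda^{-1}$.

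Theorem~\ref{l.perturb} then gives, with $g_{S_0}:=\bigl[\sum_{Q\in\mathcal G^j:\,\pi_{\mathcal S^j}Q=S_0}\lvert\{\Delta^{\beta^j}_Q-\Delta^{b^j}_Q\}f_j\rvert^2\bigr]^{1/2}$, the bound $\lVert g_{S_0}\rVert_{p_j}\lesssim\upsilon_1\bigl(\lVert f_j\mathbf 1_{S_0}\rVert_{p_j}+\lvert S_0\rvert^{1/p_j}\bigr)$; since $S_0\in\mathcal R^j$ is not of Type~C we have $\langle\lvert f_j\rvert\rangle_{S_0}\lesssim1$, and combining this with the dyadic John--Nirenberg inequality (the one behind \eqref{e.f_bmo}) gives $\langle\lvert f_j\rvert^{p_j}\rangle_{S_0}\lesssim1$, so $\lVert g_{S_0}\rVert_{p_j}^{p_j}\lesssim\upsilon_1^{p_j}\lvert S_0\rvert$. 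Grouping $\mathcal S^j$ into generations $\mathcal S^j=\bigcup_{k\ge1}\mathcal S^j_k$ ($\mathcal S^j_1$ the maximal cubes, $\mathcal S^j_{k+1}=\bigcup_{S\in\mathcal S^j_k}\textup{ch}_{\mathcal S^j}(S)$), the functions $g_{S_0}$ with $S_0\in\mathcal S^j_k$ have pairwise disjoint supports, so the sparseness of $\mathcal S^j$ gives
\begin{equation*}
\Bigl\lVert\Bigl[\sum_{S_0\in\mathcal S^j_k}g_{S_0}^2\Bigr]^{1/2}\Bigr\rVert_{p_j}^{p_j}=\sum_{S_0\in\mathcal S^j_k}\lVert g_{S_0}\rVert_{p_j}^{p_j}\lesssim\upsilon_1^{p_j}\sum_{S_0\in\mathcal S^j_k}\lvert S_0\rvert\lesssim\upsilon_1^{p_j}\,\tau^{\,k-1}\lvert Q^0\rvert\,.
\end{equation*}
Since $\sum_{Q\in\mathcal G^j}\lvert\{\Delta^{\beta^j}_Q-\Delta^{b^j}_Q\}f_j\rvert^2=\sum_k\sum_{S_0\in\mathcal S^j_k}g_{S_0}^2$, the pointwise bound $[\sum_ka_k^2]^{1/2}\le\sum_ka_k$ together with the triangle inequality in $L^{p_j}$ reduces \eqref{e.diff} to summing the geometric series $\sum_k\upsilon_1\tau^{(k-1)/p_j}\lvert Q^0\rvert^{1/p_j}$.

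The step I expect to be the main obstacle is the verification of \eqref{e:close}: one must see that the two-part definition of a Type~A cube was designed precisely so that, after the truncation of the stopping tree, the closeness hypothesis holds on \emph{every} cube of $\mathcal Q$ and every terminal cube, which requires carefully tracking which cubes lie in $\mathcal R^j$ (so that \eqref{e.coincide}, the admissibility claimed in Remark~\ref{r.mt}, and the absence of Type~A/B/C behaviour are all genuinely available) and accounting for the single $2^n$ incurred by passing to a dyadic parent. Granting \eqref{e:close}, the application of Theorem~\ref{l.perturb} and the generational summation are routine.
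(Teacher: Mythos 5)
Your proof is correct and follows essentially the same route as the paper's: you localize to a stopping cube $S_0\in\mathcal S^j$, invoke Remark~\ref{r.mt} to set up the admissible collections, verify the closeness hypothesis \eqref{e:close} and the boundedness of averages of $f_j$ exactly as the paper does (via the failure of the two-alternative Type~A condition and the Type~C condition on cubes of $\mathcal R^j$, with the $2^n$ loss from passing to a dyadic parent absorbed by $\upsilon_1<4^{-1-n}$), apply Theorem~\ref{l.perturb}, and then sum over generations of $\mathcal S^j$ using sparseness and disjointness within each generation. The only cosmetic difference is that the paper indexes generations starting from $\mathcal R_0=\mathcal A^j_*$ while you start from $\mathcal S^j_1$ as the maximal stopping cubes; the argument is the same.
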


\begin{proof}
Let $ \mathcal R_{0}=\mathcal{A}^j_\ast$, 
and inductively set $ \mathcal R_{k+1}$ to be the maximal cubes $S'\in \mathcal S ^{j}$ strictly contained in any 
$S\in \mathcal R_{k}$.
Since  $\mathcal{S}^j$ is sparse, we have
$\sum_{S\in\mathcal{R}^k} \lvert S\rvert \lesssim \tau^k |Q^0\rvert$ if $k\ge 0$.
By 
disjointness of each collection
$\mathcal{R}^k$, the left hand side of \eqref{e.diff} is bounded by
\begin{align*}
\sum_{k=0}^\infty  \bigg[
\sum_{S\in\mathcal{R}^k}
\biggl\lVert
\biggl[
 \sum_{\substack{Q\in\mathcal{G}^j\\ \pi_{\mathcal{S}^j}Q=S}}
 \bigl|\bigl\{ \Delta ^{\beta^j} _Q - \Delta ^{b^j}_Q \bigr\}  f_j\bigr|^2\biggr]^{1/2}
\biggr\rVert_{p_j}^{p_j}\biggr]^{1/p_j}\,.
\end{align*}
Fix $k\ge 0$ and $S_0=S\in\mathcal{R}^k$.
The basic reduction to a  square function involving
cubes $Q\in\mathcal{Q}$ and
differences
$\Delta_Q^\beta$ and $\Delta_Q^b$ is described in
Remark \ref{r.mt}. 

We will apply Theorem \ref{l.perturb}, and therefore
we need to verify
that its assumptions are satisfied. 
To this end,
we may of course assume that there is a cube $Q\in\mathcal{G}^j$ such
that $\pi_{\mathcal{S}^j}Q=S_0$. As a consequence,
$S_0\in\mathcal{S}^j$ is not contained in any cube in $\mathcal{B}^j$, and
a case study using definition of Type C cubes
shows that
$\lvert \langle f_j\rangle_Q\rvert \lesssim 1$ if
$Q\in\mathcal{Q}\cup \mathcal{T}$ and
$\lVert f_j\cdot \mathbf{1}_{S_0}\rVert_{p_j}\lesssim \lvert S_0\rvert^{1/p_j}$ 
(recall that $f_j$ is in dyadic $\mathrm{BMO}$
and the normalization by $\Lambda^{-1}$ takes place).
Moreover, the same fact about
$S_0$ combined with definition of Type A cubes
implies that 
\[
\int_Q \lvert b-\beta\rvert^p = \int_Q \lvert\widetilde\beta^j_{S_0}\rvert^{p_j} \le 2^n\upsilon_1^{p_j}\lvert Q\rvert\,,\qquad Q\in\mathcal{Q}\,.
\]
For the second condition in \eqref{e:close}
for $T\in\mathcal{T}$, we first
observe that $b_T-\beta_T=0$ if $T$ is a child of a cube in $\mathcal{B}^j$.
In complementary case, $T\in \textup{ch}_{\mathcal{S}^j}(S_0)$,
and its parent is not contained in any cube in $\mathcal{B}^j$.
Thus,
$\int_T \lvert b_T-\beta_T\rvert^p = \int_T \lvert \widetilde{\beta}^j_T\rvert^{p_j}
\le \upsilon_1^{p_j} \lvert T\rvert$
by definition of Type A cubes.

The proof is finished by using Theorem \ref{l.perturb} and
appealing to previous inequalities.
\end{proof}


\section{The Inner Product and the Main Term} \label{s.innerProduct}

During the course of the remaining sections, 
we prove inequality \eqref{e.GS<}, namely,
\begin{equation}
	\Bigl\lvert 
 \sum_{P\in \mathcal G ^{1} }	\sum_{Q\in \mathcal G ^{2} } 
			\langle  T  \Delta _P ^{\beta^1 } f_1,  \Delta _Q ^{\beta^2 }f_2\rangle 
			\Bigr\rvert \le 
			 \bigl\{C_2
			\{ 1+ \mathbf T _{\textup{loc}}\}
			 + C_3   r\upsilon _1 \Lambda^2 \mathbf T  \bigr\} 
			 \lvert  Q ^{0}\rvert\,,
\end{equation}
where $C_3$ is a constant not
allowed to depend upon the absorption parameters.
This inequality
completes the proof of Lemma~\ref{l.sharp} which, in turn, implies our main result.
Let us recall that the functions $f_j$ have
been normalized, allowing us to assume that $\Lambda=1$.

The sum above is split into  dual triangular sums, one
of which is the sum over 
$ (P,Q) \in \mathcal G ^{1} \times \mathcal G ^{2}$ such that $ \ell P\ge \ell Q$.  By using goodness this
triangular sum is split into 
different collections: 
\begin{gather*}
		\mathcal P _{\textup{far}} := 
	\{ (P,Q) \in \mathcal G ^{1} \times \mathcal G ^{2}  
	\;:\; 3P \cap Q = \emptyset\,, \ell Q \le \ell P \}\,;
	\\
	\mathcal P _{\textup{diagonal}} := 
	\{ (P,Q) \in \mathcal G ^{1} \times \mathcal G ^{2} \setminus \mathcal P _{\textup{far}} \;:\; 2 ^{-r} \ell P \le \ell Q \le \ell P \}\,;
	\\
	\mathcal P _{\textup{nearby}} := 
	\{ (P,Q) \in \mathcal G ^{1} \times \mathcal G ^{2} \setminus \mathcal P _{\textup{diagonal}} 
	\;:\; Q \subset 3P\setminus P\}\,;
	\\
	\mathcal P _{\textup{inside}} := 
	\{ (P,Q) \in \mathcal G ^{1} \times \mathcal G ^{2} \setminus \mathcal P _{\textup{diagonal}} 
	\;:\;  Q \subset P\}\,.
\end{gather*}
The sums over these collections  are handled separately and, aside from the `inside' 
and `diagonal' terms, one can sum over
the absolute value of the inner products.  
The main tools to control these terms include the twisted martingale transform inequalities
combined with the local $ Tb$ hypothesis.
All of the cubes are good, which is a point used systematically. 
	This useful fact is  frequently combined with the smoothness condition on the kernel, 
	to conclude that certain maximal functions applied to the $ \beta $ functions appear.  That these maximal functions are controlled 
	will be a consequence of the corona construction, combined with the universal half-twisted martingale inequalities. 
In the analysis of the diagonal term, the perturbation
inequalities established in \S\ref{s.inequalities} play a key role.

In this section, we concentrate on the 
`inside' term, which is the main term.    
The conditions for $ (P,Q) \in	\mathcal P _{\textup{inside}} $ are: $ Q\subset P$,
 $ 2 ^{r} \ell Q < \ell P$, and $(P,Q)\in\mathcal{G}^1\times\mathcal{G}^2$; 
these conditions are abbreviated  $ Q\Subset P$ below.  
Even though $ Q$ is in a different grid from that of $P$, a child of $ P$ contains $Q$ because of goodness, and we denote that 
child by $ P_Q$.  
 We will write $\Delta_P:=\Delta_P^{\beta^1}$ (likewise for $Q$) and
$
	\widetilde \Delta _P f_1:= \widetilde D_P f_1\cdot \beta ^{1} _{\pi _{\mathcal S^1} P} 
$, 
where the half-twisted martingale difference $\widetilde D_P=\widetilde D_P^{\beta^1}$ of \eqref{e.halfDef} does not sum over of the children of $ P$ that 
have a different stopping parent from that of $P$. 

In order to control the inside term, it suffices to bound the sum over $ S \in \mathcal S ^{1}$ of the terms 
\begin{equation}\label{e.decomposition}
	\bigg\lvert\mathbf{1}_{\{\pi S\in \mathcal{G}^1\}}\cdot \sum_{\substack{Q \;:\; Q\Subset \pi S}} \langle f_1 \rangle _{S} 
	\langle T \beta ^{1} _{S}, \Delta _Q f_2\rangle\bigg\rvert + 
	\bigg\lvert \underbrace{\sum_{P \;:\; \pi _{\mathcal S^1} P=S}
	\sum_{\substack{Q \;:\; Q\Subset P}} 
	\langle  T   \widetilde \Delta _P f_1,  \Delta _Q f_2\rangle}_{=:B_S(f_1,f_2)}\bigg\rvert \,. 
	\end{equation}
The point of this step is that, in the left hand side, the argument of $T$ depends only on $ \beta ^{1}_S$.  And, a sufficient cube-wise inequality is 
\begin{equation*}
	\eqref{e.decomposition} \lesssim \widetilde{\mathbf{T}}_{\textup{loc}} \lvert S\rvert\,,\qquad 
	\widetilde{\mathbf{T}}_{\textup{loc}}:=\mathbf T _{\textup{loc}} + \upsilon _1 \mathbf T\,,
\end{equation*}
where the implied constant
is not allowed to depend upon the absorption parameters.
Since the collection $ \mathcal S ^{1}$ is 
sparse, this upper bound is  summable over $ S\in \mathcal S ^{1} $ to a multiple of
$\widetilde{\mathbf{T}}_{\textup{loc}} \lvert  Q ^{0}\rvert $.  

The left-hand side of \eqref{e.decomposition} is easy to control. First of all, by the local $ Tb$ properties stated in
Lemma \ref{l.typeA}, and the  twisted martingale inequality \eqref{e.twisted}, 
\begin{align*}
\Bigl\lvert \mathbf{1}_{\{\pi S\in \mathcal{G}^1\}}\cdot
	\sum_{\substack{Q \;:\; Q\Subset \pi S \\ Q\subset S}} \langle f_1 \rangle _{S} 
	\langle T \beta ^{1} _{S}, \Delta _Q f_2\rangle 
	\Bigr\rvert & \lesssim \widetilde{\mathbf T} _{\textup{loc}}
	\lvert  S\rvert ^{1/p_2'} 
	\Bigl\lVert 
		\sum_{\substack{Q \;:\; Q\Subset \pi S}} \langle f_1 \rangle _{S}  \Delta _Q f_2
		\Bigr\rVert_{p_2} \lesssim  \widetilde{\mathbf T}_{\textup{loc}}  \lvert  S\rvert \,. 
\end{align*}
The remaining part of the left-hand side is a sum over
cubes $Q\Subset \pi S$ for which $Q\cap S=\emptyset$.
This part is conveniently estimated by using Hardy's inequality in Lemma \ref{l.hardy}
and inequality $p_2'\le p_1$.

In the right-hand side of \eqref{e.decomposition} the argument of $ T$ is written as follows. If $ Q\Subset P$ and $\pi _{\mathcal S^1} P=S$,
\begin{align*}
	\widetilde \Delta _P f_1  &= 
	\langle  \widetilde D_P f_1 \rangle _{P_Q}  \cdot \beta ^{1} _{ S } \mathbf 1_{S} 
	- \langle  \widetilde D_P f_1 \rangle _{P_Q} \cdot \beta ^{1} _{S} 
	\mathbf 1_{S\setminus P_Q} 
	+ \widetilde \Delta _P f_1 \cdot \mathbf 1_{P\setminus P_Q} \\
	&=: 
	\Delta ^{\textup{para}} _{P} f_1 - 	\Delta ^{\textup{stop}} _{P} f_1 + 	\Delta ^{\textup{error}} _{P} f_1\,,
	\end{align*}
where we treat $ \widetilde \Delta _P f_1  \mathbf 1_{P_Q}$ as the main contribution, and write 
$ \mathbf 1_{P_Q}= \mathbf 1_{S} - \mathbf 1_{S \setminus P_Q}$.  
This decomposition of $\widetilde \Delta _P f_1$ leads to a corresponding decomposition of $ B _{S} (f_1,f_2)$---by which we denote 
 the second term on display \eqref{e.decomposition}
 without the absolute values---into the paraproduct term, the stopping term, and the error term, written as 
\begin{align*}
	B _{S} (f_1,f_2) = B _{S} ^{\textup{para}} (f_1,f_2)  
	- B _{S} ^{\textup{stop}} (f_1,f_2) + B _{S} ^{\textup{error}} (f_1,f_2)\,,
	\end{align*}
where $S\in\mathcal{S}^1$ is fixed. The terminology is drawn from \cite{V,1003.1596}. 

 \subsection{Control of the Paraproduct Term}
 This brief argument is in fact the core of the proof.  
 Consider $ B ^{\textup{para}} _{S}(f_1,f_2)$.  
 In this term, the argument of $ T$ is a certain multiple of $ \beta ^{1} _{S}\mathbf{1}_S=\beta ^{1} _{S}$. For the  cubes $ Q\in\mathcal{G}^2$, 
 let us define 
\begin{equation*}
	\varepsilon _Q := \sum_{\substack{{P \;:\; \pi _{\mathcal S^1} P=S}\\ Q \Subset P}}  \langle  \widetilde D_P f_1\rangle _{P_Q}\,.
\end{equation*} 
The condition\footnote{The condition applies with  
the minimal cube in $\mathcal{G}^1$,
subject to the summation conditions, instead of $Q$.}   \eqref{e.BNDD} of Lemma~\ref{l.GB},
 was designed for the implication that 
the  numbers $ \varepsilon _Q$ are uniformly bounded.    Therefore, we can estimate
\begin{align*}
\bigl\lvert 
	B _{S} ^{\textup{para}} (f_1,f_2)  
\bigr\rvert & = 
\Bigl\lvert 
 \sum_{P \;:\; \pi _{\mathcal S^1} P=S} \sum_{Q \;:\; Q\Subset P} 
\langle  \widetilde D_P f_1\rangle _{P_Q}   \cdot 	\langle   T \beta ^{1} _{S}  ,  \Delta _Q f_2\rangle 
\Bigr\rvert
\\
& = 
\Bigl\lvert \Bigl\langle T \beta ^{1} _{S} ,
\sum_{Q \;:\; Q\Subset S} \varepsilon _Q \Delta _Q f_2  \Bigr\rangle_{}
\Bigr\rvert
 \le \lVert \mathbf{1}_S\cdot T \beta ^{1} _{S} \rVert_{p_2'}  
\Bigl\lVert 
\sum_{Q \;:\; Q\Subset S} \varepsilon _Q \Delta _Q f_2 
\Bigr\rVert_{p_2} 
\lesssim 
 \widetilde {\mathbf T } _{\textup{loc}}   \lvert  S\rvert \,,  
\end{align*}
where we appealed to the local $ Tb$ hypothesis, condition (3c) of Lemma \ref{l.typeA}, and  the 
martingale transform inequality \eqref{e.twisted}. 
This completes the analysis of the paraproduct term.

 \subsection{The Stopping Term}
 Recall that 
 \begin{equation*}
 	 \lvert B _{S} ^{\textup{stop}} (f_1,f_2)  \rvert
  = 
\bigg\lvert \sum_{P \;:\; \pi _{\mathcal S^1} P=S} \sum_{Q \;:\; Q\Subset P} 
 \langle  \widetilde D_P f_1\rangle _{P_Q} 	\langle   T (\beta ^{1} _{S}  \mathbf 1_{S \setminus P_Q}),  \Delta _Q f_2\rangle\bigg\rvert\,.
\end{equation*}
We will bound this by a constant multiple of $ \lvert  S\rvert $ via appealing
to that (a)  $ \int \Delta _Q f_2 = 0$ and the kernel of $ T$ has smoothness, and that
(b)  the universal  half-twisted inequality 
\eqref{e.Box<} is valid.

For integers $ s > r$, we restrict the side length of $ Q$ so that $ 2 ^{s} \ell Q= \ell P$, and 
thereby obtain a 
geometric decay in $ s$.  To accommodate this, let us define 
\begin{equation*}
	B _{S, s} ^{\textup{stop}} (f_1,f_2)  
 := 
\sum_{P \;:\; \pi _{\mathcal S^1} P=S} \sum_{\substack{Q \;:\; Q\Subset P\\  2 ^{s} \ell Q= \ell P }} 
\langle  \widetilde D_P f_1\rangle _{P_Q} 	\langle   T (\beta ^{1} _{S}  \mathbf 1_{S \setminus P_Q}),  \Delta _Q f_2\rangle 
\end{equation*}
By goodness, $ \textup{dist} (S \setminus P_Q,Q) \ge  (\ell Q) ^{\epsilon } (\ell P_Q) ^{1- \epsilon }$. Therefore, by  
the smoothness condition on the kernel and the mean zero property of 
$ \Delta _Q f_2$  we can estimate the inner product as follows; let  
$ x_Q$ be the center of $ Q$ and recall also definition \eqref{e.Box}.
\begin{align*}
	\lvert 	\langle   T (\beta ^{1} _{S}  \mathbf 1_{S \setminus P_Q}),  \Delta _Q f_2\rangle \rvert
	& = 
	\Bigl\lvert 	 \int _{Q} \int _{S\setminus P_Q} 
\{ K (x,y) - K (x_Q,y)\} \beta ^{1} _{S} (y) \Delta _Q f_2 (x) \; dy dx  
\Bigr\rvert
\\
& \lesssim 
	 \int _{Q} \int _{S\setminus  P_Q}
	\frac { (\ell Q) ^{\eta }} { \lvert  x-y\rvert ^{n+ \eta } } \lvert \beta ^{1} _{S} (y) \Delta _Q f_2 (x) \rvert \; dy dx
	\\
& \lesssim 
2 ^{- \eta ' s} \inf _{x\in Q} M \beta ^{1} _{S} (x)\cdot \int_Q    \Box_Q f_2 \; dx\,.
\end{align*}
This is a standard off-diagonal estimate, by splitting
the region of integration in appropriate annuli, combined with the goodness of $ Q$ and the 
properties of our corona construction.  
Observe that we  gained a geometric decay in $ s$ with
$ \eta ' = (1-\epsilon)\cdot \eta >0$.  

Since cubes $Q$ with same side length, specified by $P$, are disjoint,
there is a simple appeal to the Cauchy--Schwarz inequality.
Following that, we use the trilinear form of H\"older's inequality, with indices $ p_1, 2 p_1', 2p_1'$, and the universal 
half-twisted  inequality \eqref{e.Box<}. By doing so, we obtain
\begin{align}  \label{e.again}
	\lvert B _{S, s} ^{\textup{stop}} (f_1,f_2) \rvert    & 
	\lesssim 2 ^{- \eta ' s}
	\sum_{P \;:\; \pi _{\mathcal S^1} P=S} \sum_{\substack{Q \;:\; Q\Subset P\\  2 ^{s} \ell Q= \ell P }}  
	\langle \lvert  \widetilde D_P f_1\rvert \rangle _{P}   \int_Q   M \beta ^{1} _{S} \cdot \Box_Q f _2  \; dx 
	\\
	& \lesssim 2 ^{- \eta ' s}
	\int _{S}  M \beta ^{1} _{S}  
	\Bigl[	\sum_{P \;:\; \pi _{\mathcal S^1} P=S}  \langle \lvert   \widetilde D_P f_1\rvert 
	\rangle _{P} ^2 \cdot \mathbf 1_{P}  \Bigr] ^{1/2} 
	\Bigl[ 
	\sum_{\substack{Q \;:\; Q\Subset S}} 
	\lvert  \Box_Q f_2\rvert ^2    \Bigr] ^{1/2} 
	\; dx 
	\\
	& \lesssim 2 ^{- \eta ' s} \lvert  S\rvert ^{1/p_1} 
	\Bigl\lVert 
	\Bigl[	\sum_{P \;:\; \pi _{\mathcal S^1} P=S}  \lvert  M \widetilde D_P f_1\rvert ^2  \Bigr] ^{1/2} 
	\Bigr\rVert_{2 p_1'} 
	\Bigl\lVert \Bigl[ 
	\sum_{\substack{Q \;:\; Q\Subset S}} 
	 \lvert \Box_Q f_2\rvert ^2   \Bigr] ^{1/2} 
	 \Bigr\rVert_{2 p_1'} 
	 \\&\lesssim 2 ^{- \eta ' s} \lvert S\rvert \,. 
\end{align}
This completes the analysis of stopping term.

 \subsection{The Error Term}
Here we need to control 
 \begin{align*}
 	\lvert B ^{\textup{error}} _{S} (f_1,f_2)\rvert  = \bigg\lvert\sum_{s =r +1} ^{\infty }
 	 \sum_{P \;:\; \pi _{\mathcal S^1} P=S} \sum_{\substack{Q \;:\; Q\Subset P\\ 2 ^{s} \ell Q= \ell P }} 
 	  \langle   T (\widetilde  \Delta _{P} f_1 \cdot \mathbf 1_{P \setminus P_Q})  ,  \Delta _Q f_2\rangle
	  \bigg\rvert \,. 
 	  \end{align*}
  For a fixed $ s> r$, we call the inner double series above $  B ^{\textup{error}} _{S, s} (f_1,f_2) $. 
  We will obtain a geometric 
  decay in $ s$, by using essentially the same argument as in the treatment of stopping term. 

  Indeed, 
  \begin{align*}
  	  \lvert 	\langle   T (\widetilde  \Delta _P f_1\cdot \mathbf 1_{P \setminus  P_Q}),  \Delta _Q f_2   \rvert
	& = 
	\Bigl\lvert 	\int _{Q} \int _{P \setminus P_Q} 
	\{ K (x,y) - K (x_Q,y)\}  \widetilde \Delta _P f_1 (y) \Delta _Q f_2 (x) \; 
	dy dx  \Bigr\rvert
\\
& \lesssim 
	 \int _{Q} \int _{P\setminus P_Q}
	\frac { (\ell Q) ^{\eta }} { \lvert  x-y\rvert ^{n+ \eta } } \lvert \widetilde \Delta _P f_1(y) \Delta _Q f_2 (x) \rvert \; dy dx 
	\\
& \lesssim 
2 ^{- \eta ' s} \cdot
\langle  \lvert  \widetilde D_Pf_1\rvert  \rangle_P \cdot
\inf _{x\in Q} M \beta ^{1} _{S} \cdot \int_Q \Box _Q f_2 \; dx \,.
\end{align*} 
Repeating the inequalities starting from \eqref{e.again} gives
$
	\lvert B ^{\textup{error}} _{S, s} (f_1,f_2)  \rvert  \lesssim 2 ^{- \eta ' s}\lvert  S\rvert 
$, and this suffices for the error term.

\section{The Remaining Terms} \label{s.remaining}

In this section we estimate all the remaining terms `nearby', `far', and `diagonal'.
 \subsection{The Nearby Term}\label{s.nearby}
 The nearby term concerns pairs of cubes $(P,Q)\in\mathcal{P}_{\textup{nearby}}$, that is,
cubes 
in $\mathcal{G}^1\times \mathcal{G}^2$
with the properties $ 2 ^{r} \ell Q< \ell P$ and $ Q\subset 3P\backslash P$.  
This term can be written as a sum over $S\in\mathcal{S}^1$ of terms
 \begin{equation}\label{e.nearby_decomposition}
\mathbf{1}_{\{\pi S\in \mathcal{G}^1\}}\sum_{\substack{Q \;:\; Q\subset 3\pi S \setminus \pi S\\ 2^{r} \ell Q < \ell \pi S }} 
\langle f_1\rangle_S \cdot \langle T \beta_S^1,  \Delta _Qf_2\rangle
+  \sum_{P:\pi_{\mathcal{S}^1}P=S} \sum_{\substack{Q:Q\subset 3P\setminus P\\ 2^r\ell Q < \ell P}}
\langle T\widetilde \Delta_P f_1, \Delta_Q f_2\rangle\,,
\end{equation}
where we tacitly assume that $P\in \mathcal{G}^1$ and $Q\in  \mathcal{G}^2$.
For a fixed $S\in\mathcal{S}^1$, the absolute value of the double series above is estimated by 
\begin{equation}\label{e.close}
\sum_{s>r}\sum_{P:\pi_{\mathcal{S}^1}P=S} \sum_{\substack{Q:Q\subset 3P\setminus P\\ 2^s\ell Q = \ell P}}
\lvert\langle T\widetilde \Delta_P f_1, \Delta_Q f_2\rangle\rvert\,.
\end{equation}
By using Lemma \ref{l.off} below and  following the arguments in \eqref{e.again} with obvious changes, we find
that the inner double series in \eqref{e.close}, with  a fixed $s>r$,  is dominated by
\begin{align*}
2^{-s\eta'}\sum_{P:\pi_{\mathcal{S}^1}P=S} \sum_{\substack{Q \;:\; Q\subset 3P \backslash P\\ 2 ^{s} \ell Q= \ell P }}  
 \inf _{x\in Q} M \beta ^{1} _{S} (x)\cdot
 \langle  \lvert \widetilde D_P f_1\rvert \rangle _P\cdot \int _{Q} \Box _Q f_2(x) \; dx 
 \lesssim 2^{-s\eta'} \lvert  S\rvert\,. 
\end{align*}
The right hand side is summable in $s$ to a constant multiple of $\lvert S\rvert$.
Consequently, by  applying the sparseness of $\mathcal{S}^1$, we find
that
\eqref{e.close} summed over $S\in\mathcal{S}^1$ is bounded by a constant multiple of $\lvert Q^0\rvert$.
The same method of proof controls the first term in \eqref{e.nearby_decomposition}; 
alternatively, one may apply the Hardy's inequality, Lemma \ref{l.hardy}. 

We now turn to a lemma that is used above.


\begin{lemma}\label{l.off} Let 
 $ (P, Q)\in\mathcal{P}_{\textup{nearby}} $ with $\pi_{\mathcal{S}^1}P=S$. Then
 with $\eta'=\eta(1-\epsilon)>0$ we have
	\begin{align}\label{e.nearby} 
 	 \lvert  \langle  T \widetilde\Delta _{P} f_1, \Delta _Q f_2\rangle\rvert 
 	 &\lesssim   \bigl(\ell Q/\ell P \bigr) ^{\eta '}  
 	 \cdot \inf _{x \in Q} M \beta ^{1}_S (x)\cdot  \langle \lvert \widetilde D_P f_1 \rvert \rangle_P \cdot
 	 \int _{Q} \Box _Q f_2  \; dx \,. 
	 \end{align}
\end{lemma}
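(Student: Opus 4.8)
\textbf{Proof proposal for Lemma~\ref{l.off}.} The plan is to expand the inner product as a double integral over $P\times Q$ and to exploit the goodness of $Q$, which keeps $Q$ a definite distance from $\partial P$ even though $\ell Q$ is much smaller than $\ell P$. Since $(P,Q)\in\mathcal{P}_{\textup{nearby}}$ we have $Q\subset 3P\setminus P$ and $2^r\ell Q<\ell P$; goodness of $Q$ (with respect to the larger cube $P\in\mathcal{D}^1$ in the complementary grid) gives $\textup{dist}(Q,\partial P)\gtrsim (\ell Q)^\epsilon(\ell P)^{1-\epsilon}$. First I would write
\[
\langle T\widetilde\Delta_P f_1,\Delta_Q f_2\rangle
=\int_Q\int_P K(x,y)\,\widetilde\Delta_P f_1(y)\,\Delta_Q f_2(x)\,dy\,dx
\]
and subtract off $K(x_Q,y)$, using that $\int_Q \Delta_Q f_2\,dx=0$ (recall $\Delta_Q^{\beta^2} f_2$ has vanishing integral over $Q$), so that the $x$-integrand becomes $\{K(x,y)-K(x_Q,y)\}$, where $x_Q$ is the center of $Q$. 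Since $x,x_Q\in Q$ and $y\in P$ with $|x-x_Q|\le \tfrac12\ell Q\le \tfrac12|x-y|$ for $y$ outside a neighborhood of $Q$ of radius $\gtrsim(\ell Q)^\epsilon(\ell P)^{1-\epsilon}$, the smoothness estimate \eqref{e.smoothness} applies and yields the bound $(\ell Q)^\eta|x-y|^{-n-\eta}$ for the kernel difference.

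Next I would estimate $|\widetilde\Delta_P f_1|$ on $P$: by definition $\widetilde\Delta_P f_1=\widetilde D_P f_1\cdot\beta^1_{\pi_{\mathcal{S}^1}P}$, and $\widetilde D_P f_1$ is constant on each child of $P$, so on the child $P'$ of $P$ relevant to the integration we have $|\widetilde D_P f_1|\le\langle|\widetilde D_P f_1|\rangle_P$ up to the finitely many values it takes; bounding crudely gives $|\widetilde\Delta_P f_1(y)|\lesssim \langle|\widetilde D_P f_1|\rangle_P\cdot|\beta^1_S(y)|$. Plugging in, the inner product is controlled by
\[
\langle|\widetilde D_P f_1|\rangle_P\int_Q\Big(\int_P\frac{(\ell Q)^\eta}{|x-y|^{n+\eta}}|\beta^1_S(y)|\,dy\Big)|\Delta_Q f_2(x)|\,dx.
\]
The $y$-integral is then split into dyadic annuli $2^k(\ell Q)^\epsilon(\ell P)^{1-\epsilon}\le|x-y|<2^{k+1}(\ell Q)^\epsilon(\ell P)^{1-\epsilon}$, $k\ge 0$; on each annulus $\int|\beta^1_S|\lesssim (2^k(\ell Q)^\epsilon(\ell P)^{1-\epsilon})^n\cdot \inf_{x\in Q}M\beta^1_S(x)$, and summing the resulting geometric series in $k$ produces the factor $(\ell Q)^\eta\cdot((\ell Q)^\epsilon(\ell P)^{1-\epsilon})^{-\eta}=(\ell Q/\ell P)^{\eta(1-\epsilon)}=(\ell Q/\ell P)^{\eta'}$, together with $\inf_{x\in Q}M\beta^1_S(x)$. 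What remains is $\int_Q|\Delta_Q f_2(x)|\,dx$, which I would replace by $\int_Q\Box_Q f_2\,dx$ since $|\Delta_Q f_2|\le |\widetilde D_Q f_2\cdot\beta^2_{\cdots}|$ and—after accounting for the $\widetilde\chi_Q$ term in \eqref{e.sigmaQ} and the boundedness of the relevant averages of $\beta^2$ from condition (2b)/(3b) of the perturbed stopping data—one has the pointwise domination $|\Delta_Q f_2|\lesssim \Box_Q f_2\cdot\langle|Mf_2|\rangle$-type control; more simply, bounding $|\Delta_Q f_2|$ by $|\widetilde D_Q f_2|\cdot M\beta^2_{\cdot}$ plus the stopping contribution absorbed into $\widetilde\chi_Q$ gives $\int_Q|\Delta_Q f_2|\lesssim \int_Q\Box_Q f_2$ up to the uniformly bounded factor $\langle|M\beta^2|^{p_2}\rangle_Q^{1/p_2}\lesssim 1$. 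Collecting everything yields \eqref{e.nearby}.

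\textbf{Main obstacle.} The delicate point is the passage from $|\Delta_Q f_2|$ to $\Box_Q f_2$ in a clean pointwise form and the verification that goodness really does give the separation $\textup{dist}(Q,\partial P)\gtrsim(\ell Q)^\epsilon(\ell P)^{1-\epsilon}$ in the configuration $Q\subset 3P\setminus P$—one must check that when $Q$ lies in the collar $3P\setminus P$ the relevant boundary in the definition of badness is indeed $\partial P$, and that $\ell P\ge 2^r\ell Q\ge 2^r\ell Q$ satisfies the size threshold $\ell P\ge 2^r\ell Q$ in the badness definition (it does, by the hypothesis $2^r\ell Q<\ell P$ for $\mathcal{P}_{\textup{nearby}}$). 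The annular decomposition and geometric summation are routine; the bookkeeping of which averages of $\beta^1_S$, $\beta^2$ are controlled (via Lemma~\ref{l.typeA}) so that the maximal functions $M\beta^1_S$ and the $\Box_Q$ functions appear with $O(1)$ constants is the part that requires care but no new ideas.
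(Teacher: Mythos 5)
Your proposal is correct and follows essentially the same route as the paper's proof: both use the mean-zero property of $\Delta_Q f_2$ to introduce the kernel difference $K(x,y)-K(x_Q,y)$, invoke the smoothness bound \eqref{e.smoothness} (available because goodness of $Q$ together with $2^r\ell Q<\ell P$ gives $\mathrm{dist}(Q,\partial P)\gtrsim(\ell Q)^\epsilon(\ell P)^{1-\epsilon}$), carry out the annular decomposition to produce the factor $(\ell Q/\ell P)^{\eta(1-\epsilon)}\cdot\inf_{Q}M\widetilde\Delta_P f_1$, and finish with the bounds $|\widetilde\Delta_P f_1|\lesssim\langle|\widetilde D_P f_1|\rangle_P\,|\beta^1_S|$ (hence $M\widetilde\Delta_P f_1\lesssim\langle|\widetilde D_P f_1|\rangle_P\,M\beta^1_S$) and $\int_Q|\Delta_Q f_2|\lesssim\int_Q\Box_Q f_2$. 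The point you flag as the ``main obstacle'' (passing from $|\Delta_Q f_2|$ to $\Box_Q f_2$) is handled in integrated, not pointwise, form: on each child $Q'$ of $Q$, $\Delta_Q f_2$ is a bounded multiple of a $\beta^2$ function whose $L^1$ average is $\lesssim 1$ by the perturbed stopping data, which yields $\int_Q|\Delta_Q f_2|\lesssim\int_Q\Box_Q f_2$ directly.
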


\begin{proof}
By assumption, $Q\subset 3P\setminus P$ and $2^r \ell Q<\ell P$.
Since $Q$ is good, 
$|x-x_Q| \le |y-x_Q|/2$ for every $x\in Q$ and $y\in P$. Hence,
the kernel smoothness condition applies, and
we can estimate as follows, with  $ x_Q$ the center of $ Q$,
	\begin{align*}
 \lvert  \langle  T \widetilde\Delta _{P} f_1, \Delta _Q f_2\rangle\rvert  
 &= \Bigl\lvert 
 \int _{Q} \int _{P} \{ K (x,y) - K (x_Q,y)\} \widetilde\Delta _P f_1(y) \Delta _Q f_2 (x) \; dy dx 
 \Bigr\rvert
\\
& \lesssim 
  \int _{Q} \int _{P} 
  \frac {|x-x_Q|^\eta} {\lvert  x_Q-y\rvert^{n+ \eta }}
  \bigl\lvert  \widetilde\Delta _P f_1(y) \Delta _Q f_2 (x)\bigr\rvert \; dy dx 
  \\
 & \lesssim  
 \bigl( \ell Q / \ell P\bigr)^{\eta(1-\epsilon)} \cdot
  \inf _{x \in Q} M \widetilde \Delta _{P} f_1  (x) \cdot\int _{Q} \lvert  \Delta _Q f_2\rvert\; dx\,.
\end{align*}
Since $Q\in\mathcal{G}^2$, we have
$	\int _{Q}\lvert  \Delta _Q f_2\rvert\; dx  \lesssim \int _{Q}  \Box _Q f_2  dx$.
Furthermore, by definition,
\begin{align*}
\lvert \widetilde\Delta_P f_1 \rvert &= \lvert \widetilde D_P f_1 \cdot \beta_S^1\rvert
= \sum_{P'\in\textup{ch}(P)} |\langle \widetilde D_P f_1\rangle_{P'}\rvert \cdot |\beta_S^1 \cdot \mathbf{1}_{P'}\rvert \lesssim \langle \lvert \widetilde D_P f_1\rvert \rangle_P \cdot |\beta_S^1|\,.
\end{align*}
In particular, $M\widetilde \Delta_P f_1 \lesssim \langle \lvert \widetilde D_P f_1|\rangle_P\cdot M\beta_S^1$,
so the desired estimate follows.
\end{proof}

 \subsection{The Far Term}\label{s.farTerm}
 The far term concerns pairs of cubes  $(P,Q)\in\mathcal{P}_{\textup{far}}$,
satisfying $\ell Q\le \ell P$ and 
 $3P\cap Q = \emptyset $ in particular.     The goodness of these cubes is irrelevant here.
The absolute value of the far term is  bounded 
by the sum over integers $ s\ge 0$ and $ t\ge 1$ of  terms
  \begin{equation}\label{e.what_to_bound}
 \sum_{P} \sum_{\substack{Q \;:\;  2 ^{t-1} \ell P\le \textup{dist} (P,Q) < 2 ^{t} \ell P \\ 2 ^{s} \ell Q= \ell P }} 
\mathbf{1}_{(P,Q)\in\mathcal{P}_{\textup{far}}} \cdot
\lvert  	 \langle T \Delta _Pf_1,  \Delta _Qf_2\rangle\rvert\,.
\end{equation}
By
Lemma \ref{l.basic_far} below, we obtain the following upper bounds for the term \eqref{e.what_to_bound}
\begin{equation}\label{e.last}
\begin{split}
&2 ^{- \eta s -(n+ \eta)t} 
\int _{\mathbf{R}^n}
\sum_{P} \sum_{\substack{Q \;:\;  2 ^{t-1} \ell P\le \textup{dist} (P,Q) < 2 ^{t} \ell P \\ 2 ^{s} \ell Q= \ell P }} 
  \langle \Box_P f_1\rangle_P \cdot  \mathbf{1}_Q(x)\cdot
 	  	  \Box_Q f_2(x) \; dx 
\\
& \lesssim 2 ^{- \eta s -\eta t}  
\Bigl\lVert 
\Bigl[
\sum_{P\in\mathcal{G}^1} \lvert M\Box_P f_1\rvert^2
\Bigr] ^{1/2} 
\Bigr\rVert_{2}
\Bigl\lVert 
\Bigl[
\sum_{Q\in\mathcal{G}^2}  \lvert \Box_Q f_2  \rvert^2
\Bigr] ^{1/2} 
\Bigr\rVert_{2}
\\
& \lesssim 2 ^{- \eta (s+t)} \lvert  Q ^{0}\rvert\,.  
 	 \end{split}
	 \end{equation}
	 Observe that in the first estimate
	 we lose a factor $2^{nt/2}$ twice, 
	 because of additional summation 
	 associated with both of the square functions. 
	 In order to see this for the first square function, one changes the
	 order of summation and integration, and then applies inequality $|P|^{-1}\sum_{Q} |Q|\lesssim 2^{tn}$
	 for each $P$ inside the $P$-summation.
 
The last bound in \eqref{e.last} is still summable in $ s$ and $ t$, so that
we are left with the following.
  
 \begin{lemma}\label{l.basic_far}
 Let $(P,Q)\in \mathcal P _{\textup{far}}$.
 Then
 \begin{align*}
 \lvert \langle T\Delta_P f_1, \Delta_Q f_2\rangle \rvert
 \lesssim \bigl(\ell Q/\ell P \bigr)^{\eta} \cdot \bigg(\frac{\mathrm{dist}(P,Q)}{\ell P}\bigg)^{-n-\eta}\,
 \cdot \langle \Box_P f_1\rangle_P \cdot \int_Q \Box_Q f_2\,.
 \end{align*}
 \end{lemma}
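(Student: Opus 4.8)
The plan is to run a textbook off-diagonal Calder\'on--Zygmund estimate, exploiting the separation of the supports and the mean-zero property of the \emph{smaller} twisted difference, and then to convert the resulting $L^1$ norms of $\Delta_P f_1$ and $\Delta_Q f_2$ into the $\Box$-functions appearing on the right-hand side. Since $(P,Q)\in\mathcal P_{\textup{far}}$ we have $Q\cap 3P=\emptyset$, whence $\mathrm{dist}(P,Q)\ge\ell P\ge\ell Q$ and the supports of $\Delta_P f_1$ and $\Delta_Q f_2$ are disjoint; the kernel representation of $T$ therefore applies, giving
\[
\langle T\Delta_P f_1,\Delta_Q f_2\rangle=\int_Q\int_P K(x,y)\,\Delta_P f_1(y)\,\Delta_Q f_2(x)\,dy\,dx .
\]
Directly from \eqref{e.twist} one checks that $\int_Q\Delta_Q^{\beta^2}f_2=0$: the integral over a child $Q'$ of the ``plus'' term equals $\langle f_2\rangle_{Q'}|Q'|$, these sum to $\langle f_2\rangle_Q|Q|$, and this cancels the integral of the ``minus'' term because $\int_Q\beta^2_{\pi_{\mathcal S^2}Q}=\langle\beta^2_{\pi_{\mathcal S^2}Q}\rangle_Q|Q|$. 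Hence $K(x,y)$ may be replaced by $K(x,y)-K(x_Q,y)$, where $x_Q$ is the centre of $Q$.

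Next, for $x\in Q$ and $y\in P$ one has $|x-x_Q|\le\tfrac12\ell Q$ and $|x_Q-y|\ge\mathrm{dist}(P,Q)\ge\ell P\ge\ell Q$, so the smoothness hypothesis \eqref{e.smoothness} applies and, since also $|x_Q-y|\approx\mathrm{dist}(P,Q)$,
\[
|K(x,y)-K(x_Q,y)|\lesssim\frac{(\ell Q)^\eta}{\mathrm{dist}(P,Q)^{n+\eta}}
=\Bigl(\frac{\ell Q}{\ell P}\Bigr)^{\eta}\Bigl(\frac{\mathrm{dist}(P,Q)}{\ell P}\Bigr)^{-n-\eta}(\ell P)^{-n},
\]
uniformly in $x,y$. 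Inserting this bound, pulling the constant out of the double integral, and writing $(\ell P)^{-n}\int_P|\Delta_P f_1|=\langle|\Delta_P f_1|\rangle_P$, the claim is reduced to the two $L^1$-domination estimates $\int_P|\Delta_P^{\beta^1}f_1|\lesssim\int_P\Box_P^{\beta^1}f_1$ and $\int_Q|\Delta_Q^{\beta^2}f_2|\lesssim\int_Q\Box_Q^{\beta^2}f_2$.

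The second of these is exactly the inequality for $Q\in\mathcal G^2$ already used in the proof of Lemma~\ref{l.off}. For the first, decompose the full twisted difference over the children of $P$, with $S=\pi_{\mathcal S^1}P$:
\[
\Delta_P^{\beta^1}f_1=(\widetilde D_P^{\beta^1}f_1)\,\beta^1_S+\sum_{P'\in\textup{ch}(P)\cap\mathcal S^1}\langle f_1\rangle_{P'}\,\beta^1_{P'}\,\mathbf 1_{P'} .
\]
Since $\widetilde D_P^{\beta^1}f_1$ is constant on each child, the first summand integrates to $\int_P|(\widetilde D_P^{\beta^1}f_1)\beta^1_S|\lesssim\int_P|\widetilde D_P^{\beta^1}f_1|$, using $\int_{P'}|\beta^1_S|\lesssim|P'|$ on every child $P'$ (H\"older together with the $L^{p_1}$ bound on $M\beta^1_S$ from Lemma~\ref{l.typeA}(3b)); the stopping-child terms are absorbed into $\int_P\widetilde\chi_P=|P|$, which is nonzero precisely because $P$ has a child in $\mathcal S^1$, invoking the stopping bounds for $\beta^1_{P'}$ and the control of the averages of $f_1$ afforded by the truncation at the sets $\mathcal B^j$. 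Summing the two contributions gives the asserted bound. The delicate point is this last step: converting the twisted differences into $\Box$-functions must be carried out with the corona construction in hand, since the $f_j$ are only in $\mathrm{BMO}$ and may be unbounded --- it is for exactly this reason that the truncation and the auxiliary term $\widetilde\chi$ were built into the construction.
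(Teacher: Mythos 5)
Your proposal is correct and follows essentially the same argument as the paper: exploit the mean-zero property of $\Delta_Q f_2$ and the kernel smoothness to obtain the off-diagonal decay, then convert $\int_P|\Delta_P f_1|$ and $\int_Q|\Delta_Q f_2|$ into the $\Box$-functions. The only cosmetic difference is that you use $|x_Q-y|\approx\mathrm{dist}(P,Q)$ to pull the kernel bound out as a uniform constant, whereas the paper keeps the $y$-dependence and bounds $\int_P\lvert\Delta_P f_1(y)\rvert\,|x_Q-y|^{-n-\eta}\,dy$ via $\inf_{x\in Q}M\Delta_P f_1(x)$; both yield the same estimate.
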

 
 \begin{proof}
 Since $\mathrm{dist}(P,Q)\ge \ell P$, 
 the kernel smoothness condition applies with  $ x_Q$  the center of $ Q$:
	\begin{align*}
 \lvert  \langle  T \Delta _{P} f_1, \Delta _Q f_2\rangle\rvert  
 &= \Bigl\lvert 
 \int _{Q} \int _{P} \{ K (x,y) - K (x_Q,y)\} \Delta _P f_1(y) \Delta _Q f_2 (x) \; dy dx 
 \Bigr\rvert
\\
& \lesssim 
  \int _{Q} \int _{P} 
  \frac {|x-x_Q|^\eta} {\lvert  x_Q-y\rvert^{n+ \eta }}
  \bigl\lvert  \Delta _P f_1(y) \Delta _Q f_2 (x)\bigr\rvert \; dy dx 
  \\
 & \lesssim \bigl(\ell Q\bigr)^{\eta} \cdot \mathrm{dist}(P,Q)^{-\eta} \cdot
  \inf _{x \in Q} M \Delta _{P} f_1 (x) \cdot \int _{Q} \lvert  \Delta _Q f_2\rvert\; dx\,.
\end{align*}
Observe that
\begin{equation*}
	\int _{P}\lvert  \Delta_P f_1\rvert\; dx  \lesssim \int _{P}  \Box_P f_1\; dx,\qquad 
		\int _{Q}\lvert  \Delta _Q f_2\rvert\; dx  \lesssim \int _{Q}  \Box _Q f_2  dx \,.  
\end{equation*}
Thus,
\begin{align*}
M \Delta_P  f_1(x_Q) \lesssim \frac{1}{\mathrm{dist}(P,Q)^n} \int_{\mathbf{R}^n} \lvert \Delta_P f_1\rvert\; dx
\lesssim \frac{|P|}{\mathrm{dist}(P,Q)^n} \langle \Box_P f_1\rangle_P\,.
\end{align*}
The desired estimate follows by combining the estimates above.
 \end{proof}

\subsection{The Diagonal Term} \label{s.diagonal}  

The diagonal term is the hardest in many
local $Tb$ arguments, and this is  true also in our situation; the goal
is to prove the following inequality:
\begin{equation}\label{e.dia<}
	 	\Bigl\lvert 
	\sum_{P \in \mathcal G ^{1}} 
	\sum_{\substack{Q\in \mathcal G ^{2}\\ Q\cap 3P \neq \emptyset \,,\,  2 ^{-r} \ell P \le \ell Q \le \ell P }} 
	\langle  T \Delta _P f_1, \Delta _Q f_2 \rangle 
	\Bigr\rvert
	\lesssim \big\{C_r(1+ \mathbf T _{\textup{loc}}) + r \upsilon _1\mathbf T \big\} \lvert  Q ^{0}\rvert \,.  
\end{equation}
Note, in particular, that the bound in terms of $ \mathbf T $  has leading absorbing constant $r  \upsilon _1$. 
On the other hand, $ \mathbf T _{\textup{loc}}$ has a leading constant 
$ C_r$ that will be exponential in $ r$. The implied constant is independent
of the absorption parameters.

The first step in the proof is not so straight forward.
Its purpose is to avoid terms
$\{2^{cr} \upsilon_1\mathbf{T}\}\lvert Q^0\rvert$ that cannot be absorbed.
To explain, let us pass back to the heavier notation 
$ \Delta _P f_1 = \Delta _{P} ^{\beta ^{1}} f_1$;  the point of the estimate below is that we will replace 
$ \beta ^{1}$ in the twisted differences by $ b ^{1}$. 

\begin{lemma}\label{l.back2b} There holds 
\begin{equation*}
	\Bigl\lvert 
	\sum_{P \in \mathcal G ^{1}} 
	\sum_{\substack{Q\in \mathcal G ^{2}\\ Q\cap 3P \neq \emptyset \,,\,  2 ^{-r} \ell P \le \ell Q \le \ell P }} 
	\langle  T (\Delta _P ^{\beta ^{1}} f_1  - \Delta _P ^{b^1} f_1)  , \Delta _Q f_2\rangle 
	\Bigr\rvert
	\lesssim \{ r \upsilon _1 \mathbf T \} \lvert  Q ^{0}\rvert\,.
\end{equation*}
\end{lemma}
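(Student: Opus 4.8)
The plan is to freeze the scale ratio between $P$ and $Q$, thereby splitting the sum into $r+1$ pieces, and to estimate each piece by recognizing it as a bilinear form on which the operator norm $\mathbf{T}$ may be spent, multiplied by the small quantity produced by Theorem~\ref{t.perturb_special}. For $s\in\{0,1,\dots,r\}$, let $\Sigma_s$ be the part of the sum in Lemma~\ref{l.back2b} running over the pairs with $2^s\ell Q=\ell P$, so that the left-hand side is at most $\sum_{s=0}^{r}\lvert\Sigma_s\rvert$. Set $g_P:=(\Delta_P^{\beta^1}-\Delta_P^{b^1})f_1$ for $P\in\mathcal{G}^1$, and for such $P$ put
$$
	h_P^s:=\sum_{\substack{Q\in\mathcal{G}^2\;:\;Q\cap 3P\neq\emptyset\\ 2^s\ell Q=\ell P}}\Delta_Q f_2\,,
$$
so that $\Sigma_s=\sum_{P\in\mathcal{G}^1}\langle Tg_P,h_P^s\rangle$. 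Here $g_P$ is supported on $P$ and $h_P^s$ on $5P$, and the pairing is understood in the $L^2$ sense.

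The essential point — and the one place requiring care — is that one must not estimate the inner products $\langle Tg_P,h_P^s\rangle$ cube by cube: $T$ is non-local and there is no decay across the (unboundedly many) scales of $P$, so summing absolute values of the inner products diverges. Instead I would keep the bilinear form intact, apply the Cauchy--Schwarz inequality in $P$ pointwise, and then H\"older's inequality with $1/p_2'+1/p_2=1$:
$$
	\lvert\Sigma_s\rvert\le\Bigl\lVert\Bigl(\sum_{P}\lvert Tg_P\rvert^2\Bigr)^{1/2}\Bigr\rVert_{p_2'}\Bigl\lVert\Bigl(\sum_{P}\lvert h_P^s\rvert^2\Bigr)^{1/2}\Bigr\rVert_{p_2}\,.
$$
For the first factor I would invoke the $\ell^2$-valued boundedness of the Calder\'on--Zygmund operator $T$, a standard consequence of the kernel estimates \eqref{e.size}, \eqref{e.smoothness} and the $L^2$ bound $\mathbf{T}$: $\bigl\lVert(\sum_P\lvert Tg_P\rvert^2)^{1/2}\bigr\rVert_{p_2'}\lesssim(1+\mathbf{T})\bigl\lVert(\sum_P\lvert g_P\rvert^2)^{1/2}\bigr\rVert_{p_2'}$. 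Since $1/p_1+1/p_2\le 1$ gives $p_2'\le p_1$, and every $g_P$ is supported in a fixed bounded dilate of $Q^0$, H\"older's inequality followed by the perturbation estimate of Theorem~\ref{t.perturb_special} yields
$$
	\Bigl\lVert\Bigl(\sum_{P\in\mathcal{G}^1}\lvert g_P\rvert^2\Bigr)^{1/2}\Bigr\rVert_{p_2'}\lesssim\lvert Q^0\rvert^{1/p_2'-1/p_1}\Bigl\lVert\Bigl(\sum_{P\in\mathcal{G}^1}\lvert g_P\rvert^2\Bigr)^{1/2}\Bigr\rVert_{p_1}\lesssim\upsilon_1\lvert Q^0\rvert^{1/p_2'}\,.
$$

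It remains to see that the second factor is $\lesssim\lvert Q^0\rvert^{1/p_2}$. The cubes $Q$ appearing in $h_P^s$ share a side length and lie in $5P$, hence are pairwise disjoint there; moreover, for each fixed scale the dilates $5P$ of the disjoint cubes $P\in\mathcal{D}^1$ of that scale have bounded overlap. Consequently $\sum_{P}\lvert h_P^s\rvert^2\lesssim\sum_{Q\in\mathcal{G}^2}\lvert\Delta_Q f_2\rvert^2$ pointwise, and an application of Theorem~\ref{t.twisted} with randomized coefficients over the boundedly many cubes of $\mathcal{A}^2_{\ast}$ (which are disjoint with total measure $\lesssim\lvert Q^0\rvert$), together with Khintchine's inequality, gives $\bigl\lVert(\sum_{Q\in\mathcal{G}^2}\lvert\Delta_Q f_2\rvert^2)^{1/2}\bigr\rVert_{p_2}\lesssim\lvert Q^0\rvert^{1/p_2}$. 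Combining the last three displays, $\lvert\Sigma_s\rvert\lesssim\upsilon_1(1+\mathbf{T})\lvert Q^0\rvert$ uniformly in $s$, and summing over the $r+1$ values of $s$ produces $\lesssim r\upsilon_1(1+\mathbf{T})\lvert Q^0\rvert$; the additive piece $r\upsilon_1\lvert Q^0\rvert$ is harmless, being $\lesssim\lvert Q^0\rvert$ for the eventual choice $\upsilon_1=r^{-2}$ and in any case dominated by the $C_r(1+\mathbf{T}_{\textup{loc}})\lvert Q^0\rvert$ term in \eqref{e.dia<}, so the coefficient of $\mathbf{T}$ is the absorbable quantity $\lesssim r\upsilon_1$, as claimed. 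The main obstacle is precisely the one flagged above: since the inner products cannot be treated individually, the estimate is forced through the vector-valued action of $T$, and one has to check that this step contributes only the factor $1+\mathbf{T}$, so that the $r$ scales and the $\upsilon_1$ coming from the perturbation inequality combine to the required coefficient $r\upsilon_1$ of $\mathbf{T}$.
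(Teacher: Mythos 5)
Your proposal is correct and takes essentially the same route as the paper: freeze the scale ratio $s\in\{0,\dots,r\}$, estimate the bilinear form globally via a square-function formulation rather than cube by cube, and close the argument with the perturbation inequality of Theorem~\ref{t.perturb_special} on the side of $(\Delta_P^{\beta^1}-\Delta_P^{b^1})f_1$ and the twisted martingale transform inequality of Theorem~\ref{t.twisted} on the side of $\Delta_Q f_2$. The only cosmetic differences are that the paper produces the square functions by Rademacher randomization and Khintchine rather than by directly invoking the $\ell^2$-valued boundedness of $T$ (which is itself a Khintchine-type consequence of the scalar bound), and that the paper applies H\"older at the dual pair $(p_1,p_1')$ and downgrades the second factor using $p_1'\le p_2$, whereas you use $(p_2',p_2)$ and downgrade the first factor using $p_2'\le p_1$ --- a symmetric choice, both resting on $1/p_1+1/p_2\le 1$.
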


\begin{proof}
	Perturbation inequality is the principal tool here.
	By introducing independent Rademacher variables $\{\epsilon_P\}_{P\in\mathcal{G}^1}$ that are jointly supported
	on a probability space $\Omega=\{-1,1\}^{\mathcal{G}^1}$, 
	we have, for integers $ 0\le s \le r$, 
	\begin{align*}
\Bigl\lvert 
	\sum_{P \in \mathcal G ^{1}} &
	\sum_{\substack{Q\in \mathcal G ^{2}\\ Q\cap 3P \neq \emptyset \,,\,  2 ^{-s} \ell P = \ell Q  }} 
	\langle  T (\Delta _P ^{\beta ^{1}} f_1  - \Delta _P ^{b^1} f_1)  , \Delta _Q  f_2\rangle 
	\Bigr\rvert
	\\
	& =
	\Big\lvert
	\int_\Omega
	\Big\langle 
	\sum_{P \in \mathcal G ^{1}} 
	\epsilon_P T (\Delta _P ^{\beta ^{1}} f_1  - \Delta _P ^{b^1} f_1),
	\sum_{R\in\mathcal{G}^1}
	\epsilon_R \sum_{\substack{Q\in \mathcal G ^{2}\\ Q\cap 3R \neq \emptyset \,,\,  2 ^{-s} \ell R = \ell Q  }} 
	\Delta _Q  f_2\Big\rangle d \epsilon
	\Bigr\rvert\\
	&\lesssim \bigg\{ \int_\Omega
	\Bigl\lVert 
	T \Bigl( 
		\sum_{P \in \mathcal G ^{1}} \epsilon_P\big\{\Delta _P ^{\beta ^{1}}  - \Delta _P ^{b^1} 
	 \big\} f_1\Bigr)
	 \Bigr\rVert_{p_1}^{p_1}\, d\epsilon \bigg\}^{1/p_1}
	\\&\qquad\qquad\qquad  \times \bigg\{ \int_\Omega
	 \Bigl\lVert  
	 \sum_{P \in \mathcal G ^{1}} 
	\sum_{\substack{Q\in \mathcal G ^{2}\\ Q\cap 3P \neq \emptyset \,,\,  2 ^{-s} \ell P = \ell Q  }} 
	\epsilon_P\Delta _Q f_2 
	\Bigr\rVert_{p_1'}^{p_1'}\,d\epsilon \bigg\}^{1/p_1'}\,.
		\end{align*}
Extract the operator norm from the first factor, and after that
apply
Khintchine's inequality and Theorem \ref{t.perturb_special}.
Theorem \ref{t.twisted} 
is used to estimate the second factor, but only
after having changed the order of summation and
having applied the H\"older's inequality and inequality
$p_1'\le p_2$. Finally, summing
the $s$-series yields the upper bound $r\upsilon _1\mathbf T   \lvert  Q ^{0}\rvert$.
\end{proof}

It remains to prove  Lemma \ref{e.Dia} below.
Indeed, a straight forward application of 
inequality \eqref{e.Box<}, combined with the two lemmata  \ref{l.back2b} and \ref{e.Dia}, completes the proof 
of the diagonal estimate \eqref{e.dia<}. 

\begin{lemma}\label{e.Dia}
Assume that $3P\cap Q\not=\emptyset$,  and that $ 2 ^{-r} \ell P \le \ell Q \le \ell P$. 
Then 
\begin{align}
	\bigl\lvert \langle  T \Delta _P ^{b ^{1}}  f_1, \Delta _Q f_2\rangle\bigr\rvert 
	&\lesssim
	\{ 1+\mathbf{T}_{\textup{loc}}    \} \cdot \langle \Box_P ^{b^1} f_1 \rangle_P
	\langle \Box_Q f_2 \rangle_Q\lvert P\rvert  \,. 
	\end{align}
	\end{lemma}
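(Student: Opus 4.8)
The overall plan is to reduce the diagonal estimate \eqref{e.dia<} to the single‑pair bound of Lemma~\ref{e.Dia} and then iterate it. Since $2^{-r}\ell P\le\ell Q\le\ell P$ and $Q\cap 3P\neq\emptyset$ force $Q\subset 4P$ and leave only $\lesssim_{n,r} 1$ candidate cubes $Q$ for each $P$, one first applies Lemma~\ref{l.back2b} to pass from $\Delta_P^{\beta^1}$ to $\Delta_P^{b^1}$, then sums the bound of Lemma~\ref{e.Dia} over the pairs $(P,Q)$, applies the Cauchy--Schwarz inequality in the two variables, and controls the two resulting square functions by the universal half‑twisted inequality \eqref{e.Box<}; the $r$‑dependent count of cubes $Q$ is what manufactures the constant $C_r$. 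Correspondingly, the implied constant in Lemma~\ref{e.Dia} may itself be taken exponential in $r$, which is harmless since it only multiplies $1+\mathbf T_{\textup{loc}}$.

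For Lemma~\ref{e.Dia} I would set $S:=\pi_{\mathcal S^1}P$ and expand $\Delta_P^{b^1}f_1$ via \eqref{e.twist}--\eqref{e.halfDef}. This exhibits $\Delta_P^{b^1}f_1$ as a combination, with coefficients of size $\lesssim\langle\Box_P^{b^1}f_1\rangle_P$, of the restricted pieces $\mathbf 1_{P'}b^1_S$ with $P'\in\textup{ch}(P)$ and of the full functions $b^1_{P'}$ with $P'\in\textup{ch}(P)\cap\mathcal S^1$; the coefficients are bounded because $P\in\mathcal G^1$ gives $\langle|f_1|\rangle_P\le1$ and the stopping rules give $\langle b^1_S\rangle_{P'}\ge\tfrac12$, while $\langle\Box_P^{b^1}f_1\rangle_P\ge1$ whenever $P$ has a stopping child. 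One records the auxiliary bounds $\|\Delta_P^{b^1}f_1\|_{p_1}\lesssim\langle\Box_P^{b^1}f_1\rangle_P|P|^{1/p_1}$ and $\|\Delta_Q f_2\|_{p_2}\lesssim\langle\Box_Q f_2\rangle_Q|Q|^{1/p_2}$ (from the local norm control in the (perturbed) stopping data), as well as $\int\Delta_Q f_2=0$. Splitting $\Delta_Q f_2=\Delta_Q f_2\,\mathbf 1_P+\Delta_Q f_2\,\mathbf 1_{4P\setminus P}$, the second piece has support disjoint from that of $\Delta_P^{b^1}f_1$ and contained in a fixed dilate of $P$, so the kernel size bound \eqref{e.size}, Hardy's inequality (Lemma~\ref{l.hardy}), and $p_1'\le p_2$ bound it by $\langle\Box_P^{b^1}f_1\rangle_P\langle\Box_Q f_2\rangle_Q|P|$ with no factor of $\mathbf T$.

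It remains to pair $T(\text{piece})$ with $\Delta_Q f_2\,\mathbf 1_P$ for each piece. For a piece $c\,b^1_{P'}$ with $P'$ a stopping child, split $\Delta_Q f_2\,\mathbf 1_P$ into its restrictions to $P'$ and to $P\setminus P'$: the former is estimated by the local testing hypothesis $\int_{P'}|Tb^1_{P'}|^{p_2'}\le\mathbf T_{\textup{loc}}^{p_2'}|P'|$ and H\"older, the latter has disjoint supports inside a bounded dilate of $P'$ and is handled by Lemma~\ref{l.hardy}. For a piece $c\,\mathbf 1_{P'}b^1_S$, use the identity $\mathbf 1_{P'}b^1_S=b^1_S-\mathbf 1_{S\setminus P'}b^1_S$. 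The contribution of $b^1_S$, namely $\langle Tb^1_S,\Delta_Q f_2\,\mathbf 1_P\rangle$, is where $\mathbf T_{\textup{loc}}$ enters: by \eqref{e.coincide} one has $\pi_{\widetilde{\mathcal S}^1}P=S$, so $P$ satisfies the stopping criterion (c) and hence $\int_P|Tb^1_S|^{p_2'}\le\delta^{-1}\mathbf T_{\textup{loc}}^{p_2'}|P|$; H\"older against $\|\Delta_Q f_2\|_{p_2}$ and $|Q|\le|P|$ then give the claimed bound. The error $\langle T(\mathbf 1_{S\setminus P'}b^1_S),\Delta_Q f_2\rangle$ I would split once more along $S\setminus P'\subset(3P\setminus P')\cup(S\setminus 3P)$: over $3P\setminus P'$ the supports are disjoint inside a bounded dilate of $P'$, so Lemma~\ref{l.hardy} applies once we note $\int_{3P}|b^1_S|^{p_1}\lesssim|P|$ — precisely the information bought by the auxiliary stopping condition \eqref{e.inf} not failing at $P$; over $S\setminus 3P$ one keeps $\Delta_Q f_2$ whole and exploits its mean zero together with the kernel smoothness \eqref{e.smoothness}, the crude bound $\int_S|b^1_S|\le\mathbf A|S|$, and the separation of $Q$ from $\partial S$ coming from the goodness of $Q$ when $\ell S\ge2^r\ell Q$ (with the choice \eqref{e.epsilon} of $\epsilon$, the resulting factor $(\ell Q)^{\eta}(\ell Q/\ell S)^{\eta/2}|Q|$ is dominated by $|P|$); in the remaining case $\ell S<2^r\ell Q$ the cube $S$ sits in a dilate $cP$ with $c\lesssim_r 1$ and this term too is absorbed by Lemma~\ref{l.hardy}.

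The main obstacle is exactly this last accounting: one must chop $\Delta_P^{b^1}f_1$ (and the relevant part of $\Delta_Q f_2$) into atomic pieces and route every product $\langle T(\text{piece}_1),\text{piece}_2\rangle$ into exactly one of three buckets — a local‑testing bucket of pieces sharing a common cube, producing $\mathbf T_{\textup{loc}}$; a short‑range bucket of disjoint supports inside a controlled dilate, handled by Hardy's inequality with the harmless constant; and a long‑range bucket, where $\Delta_Q f_2$ must be kept entire so that its mean zero is available and where goodness of $Q$ relative to $S=\pi_{\mathcal S^1}P$ supplies the separation — all the while keeping the correct power $\langle\Box_P^{b^1}f_1\rangle_P\langle\Box_Q f_2\rangle_Q|P|$ and never letting a factor of $\mathbf T$ appear, that being reserved, via perturbation, to Lemma~\ref{l.back2b}.
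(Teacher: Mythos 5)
Your decomposition of $\Delta_P^{b^1}f_1$ into atomic pieces, the split of $\Delta_Q f_2$ by $\mathbf 1_P$, and the treatment of a piece $c\,b^1_{P'}$ with $P'\in\mathcal{S}^1$ (testing + Hardy) all match the paper. The paper also uses the very identity $\mathbf 1_{P'}b^1_S=b^1_S-\mathbf 1_{S\setminus P'}b^1_S$ you propose. The critical divergence is that you omit the paper's central step: after restricting the $\Delta_Q$-side to a child $Q'$ and then to $P'$, the paper introduces the \emph{dual} accretive function and writes $\psi^2\mathbf 1_{P'}=\langle\psi^2\rangle_{P'}\,b^2_{P'}+\widetilde\psi^2$. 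The first summand is handled by passing to $T^\ast$ and invoking the \emph{dual} testing hypothesis $\int_{P'}|T^\ast b^2_{P'}|^{p_1'}\le\mathbf T_{\textup{loc}}^{p_1'}|P'|$ (you never use the $T^\ast$ testing at all, which is already a warning sign in a symmetrically formulated problem). The second summand $\widetilde\psi^2$ is \emph{mean zero and supported in $P'$}, and it is precisely this double localization that makes the final far term $\langle T(b^1_{S}\mathbf 1_{S\setminus 3P'}),\widetilde\psi^2\rangle$ controllable: the argument of $T$ lives outside $3P'$, $\widetilde\psi^2$ lives on $P'$, the separation is $\ge\ell P'$, and the mean zero of $\widetilde\psi^2$ supplies the kernel-smoothness cancellation.

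Your substitute — keeping $\Delta_Q f_2$ whole to exploit its mean zero — does not close this gap. For the far piece $\langle T(\mathbf 1_{S\setminus 3P}b^1_S),\Delta_Q f_2\rangle$, the mean-zero function is supported on $Q$, which in the diagonal regime ($2^{-r}\ell P\le\ell Q\le\ell P$) can touch $\partial(3P)$: $Q$ lives in a different random grid, $3P$ is not dyadic, and the goodness of $Q$ only gives separation from boundaries of $\mathcal D^1$-cubes of side $\ge 2^r\ell Q$ (so from $\partial S$ when $\ell S\ge 2^r\ell Q$, but not from $\partial P$, $\partial P'$, or $\partial(3P)$). Thus the inequality $|x-x_Q|\le\tfrac12|x_Q-y|$ required for \eqref{e.smoothness} fails on part of the region, and without it the size bound alone gives a divergent annular sum (as you can check against the $L^{p_1}$ control of $b^1_S$ from \eqref{e.inf}). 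Two further, secondary issues: (i) you first restrict to $\Delta_Q f_2\mathbf 1_P$ and later re-promote to the full $\Delta_Q f_2$ to recover the mean zero, which is incompatible bookkeeping; (ii) the claimed Hardy step for $\mathbf 1_{3P\setminus P'}b^1_S$ against $\Delta_Q f_2\mathbf 1_P$ does not have disjoint supports, since $3P\setminus P'$ contains $P\setminus P'$. These could perhaps be patched by further localizing the $\Delta_Q$-side to $P'$, but then its mean zero is lost — which is exactly the tension that the paper resolves by inserting $b^2_{P'}$ to simultaneously localize and restore vanishing mean.
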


\begin{proof}
The cube $ P$ has $ 2 ^{n}$ children $ P'$. If a child $ P'$ is not a stopping cube, 
$ \Delta _P^{b^1} f_1\cdot \mathbf{1}_{P'}$ is equal to a multiple of $b^{1} _{S_1}\mathbf{1}_{P'}$, 
where $ S_1$ is the $ \mathcal S ^{1}$ parent of $ P$, and the multiple is given by the value of the  half-twisted martingale difference 
$\widetilde D _P^{b^1} f_1$ on $ P'$.   If $ P'$ is a stopping cube, then 
$ \Delta_P^{b^1} f_1\cdot \mathbf{1}_{P'}$ in addition involves a bounded multiple 
of $ b^{1} _{P'}$.  
In both cases, the constant multiples are bounded
in absolute value by $\langle \Box_P^{b^1} f_1\rangle_P$,
compare to definition \eqref{e.Box}.
Similar comments apply to $ \Delta  _{Q} f_2=\Delta  _{Q}^{\beta^2} f_2$ restricted to a child $Q'$.  By these considerations, we need  to prove the estimate 
\begin{equation*}
	\lvert  \langle  T \psi ^{1} , \psi ^{2} \rangle\rvert \lesssim  	\{ 1+ \mathbf T _{\textup{loc}}\}
\lvert  P\rvert\,, 
\end{equation*}
where $ \psi ^{1}  = b^{1} _{S_1} \mathbf 1_{P'}$, and   $ \psi ^{2} \in
\{ \beta ^{2} _{S_2} \mathbf 1_{Q'}, \beta ^{2 } _{Q'}\mathbf{1}_{\{Q'\in\mathcal{S}^2\}}\}$, where $ S_2$ is the $ \mathcal S ^{2}$ parent of $ Q$.  A similar estimate is also required when $ \psi ^{1} =b^{1 } _{P'} $, on the condition that $ P'\in \mathcal S ^{1}$. 
An obstruction  is that, even though  the stopping conditions control the local norm of $ T b^{1} _{S_1}$,  
 we may have the restriction $b^{1} _{S_1} \mathbf 1_{P'}$ \emph{inside}  the operator $ T$.  

The case of $ \psi ^{1} =  b^{1 } _{P'} $, where we require that $P'\in\mathcal{S}^1$, 
is especially easy, since the obstruction just mentioned does not arise. 
By the construction of the stopping cubes, and the fact that 
Lebesgue measure is doubling, 
$	\lvert  \langle  T   b ^{1 } _{P'}  , \psi ^{2} \mathbf 1_{P'} \rangle\rvert \lesssim 
	\lVert \mathbf 1_{P'}T b^{1} _{P'}\rVert_{p_2'} \lVert\psi ^{2}   \rVert_{p_2}
	\lesssim 
	\mathbf  T _{ \textup{loc}} 
\lvert  P\rvert$;
here we  complied to the stopping rules by  restricting $\psi_2$ to  $ P'$.  
Concerning the contribution outside of  $P'$, 
inequality $p_1'\le p_2$ and the Hardy's inequality 
in Lemma \ref{l.hardy} together yield
$	\lvert \langle Tb^1_{P'}, \psi^2 \mathbf{1}_{Q'\setminus P'}\rangle\rvert
	\lesssim 
\lvert  P\rvert$.

In the case of $ \psi ^{1} =  b^{1} _{S_1} \mathbf 1_{P'}$ we must face the obstruction.  
Again, write 
$	\psi  ^{2} 
	= 
	\psi  ^{2}  \mathbf 1_{P'} +  \psi  ^{2}  \mathbf 1_{Q' \setminus P'}$. The Hardy's inequality controls the second term, giving 
$	\lvert \langle Tb^{1} _{S_1} \mathbf 1_{P'} ,  \psi  ^{2}   \mathbf 1_{Q' \setminus P'}\rangle \rvert
	\lesssim \lvert  P\rvert$.
For the first term, we return to the local $ Tb$ hypothesis, and write 
\begin{equation*}
	\psi ^{2} \mathbf 1_{P'} = \langle  \psi ^{2}   \rangle _{P'} b ^{2} _{P'} + 
	( \psi ^{2}\mathbf{1}_{P'} -  \langle  \psi ^{2} \rangle _{P'} b ^{2} _{P'})
	=:  \langle  \psi ^{2} \rangle _{P'} b ^{2} _{P'} +   \widetilde \psi  ^{2} \,. 
\end{equation*}
 The advantage of 
the first summand on the right is that the local $ Tb$ hypothesis gives us 
\begin{align*}
	\lvert \langle  \psi ^{2} \rangle _{P'}  \cdot  	\langle T b^{1} _{S_1} \mathbf 1_{P'} ,  b ^{2} _{P'} \rangle  \rvert 
	&= 
	\lvert \langle  \psi ^{2} \rangle _{P'}  \cdot  	\langle b^{1} _{S_1} \mathbf 1_{P'} , T ^{\ast}  b ^{2} _{P'} \rangle  \rvert 
\lesssim 
\mathbf T _{\textup{loc}}  \lvert  \langle  \psi ^{2} \rangle _{P'} \rvert  \cdot  \lvert  P\rvert
 \lesssim 
\mathbf T _{\textup{loc}}  \lvert  P\rvert\,. 
\end{align*}
	The advantage of the second summand is that it has integral zero: $ \int _{P'}  \widetilde \psi  ^{2} \; dx =0 $.  
	Note that also $ \lVert \widetilde \psi ^{2}\rVert_{p_2} \lesssim \lvert  P\rvert ^{1/p_2} $.  Take $ \mathcal P$ to be the 
cubes of the form $ P' \dot+ u$, where $ u \in \{-1,0,1\} ^{n} \setminus \{(0,0,\ldots)\}$.  Then, 
\begin{align*}
\lvert  	\langle Tb^1_{S_1} \mathbf 1_{P'},\widetilde\psi ^{2}  \rangle  \rvert 
&\le 
\lvert  	\langle T b^1_{S_1} ,\widetilde\psi ^{2}  \rangle  \rvert + 
\lvert  	\langle T b^1_{S_1} \mathbf 1_{S_1 \setminus P'},\widetilde\psi ^{2}  \rangle  \rvert 
\\
& \le 
\lvert  	\langle T b^1_{S_1} ,\widetilde\psi ^{2}  \rangle  \rvert +  
\sum_{R\in \mathcal P} \lvert  	\langle T b^1_{S_1} \mathbf 1_{R} ,\widetilde\psi ^{2}  \rangle  \rvert 
+ \lvert  	\langle T b^1_{S_1} \mathbf 1_{S_1 \setminus 3P'} , \widetilde \psi  ^{2}  \rangle  \rvert\,.
\end{align*}  
The first term is controlled by the stopping rules: 
$	\lvert  	\langle T b^1_{S_1} ,\widetilde\psi ^{2}  \rangle  \rvert \lesssim \mathbf T _{\textup{loc}}
\lvert  P\rvert$.
The second sum is finite, and each summand is precisely of the type that appears in the Hardy's Inequality.
Indeed, although $R\in\mathcal{P}$ need not be contained in $P$, by \eqref{e.inf}
we nevertheless have 
\begin{equation*}
	\langle \lvert b_{S_1}^1\rvert^{p_2'}\rangle_R^{1/p_2'} \le 
	\langle \lvert  b^{1} _{S_1}\rvert ^{p_1}  \rangle _{R}^{1/p_1}
	\le \big\{4 ^{n} \inf _{x\in P} M  \lvert  b^{1} _{S_1}\rvert ^{p_1}\big\}^{1/p_1}\lesssim 1\,.
\end{equation*}
And,  it follows that 
\begin{equation*}
\sum_{R\in \mathcal P} \lvert  	\langle T b^1_{S_1} \mathbf 1_{R} ,\widetilde\psi ^{2} \rangle  \rvert  
\lesssim  \sum_{R\in\mathcal{P}}\lVert b_{S_1}^1 \mathbf{1}_R\rVert_{p_2'}
\cdot \lvert P\rvert^{1/p_2} \lesssim 
 \lvert  P\rvert \,.  
\end{equation*}
Finally, by a similar estimate as in (the proof of) Lemma~\ref{l.basic_far}, and the stopping rules, 
\begin{equation*}
\lvert  	\langle T b^1_{S_1} \mathbf 1_{S_1 \setminus 3P'} , \widetilde \psi  ^{2}  \rangle  \rvert 
\lesssim \inf _{x\in P'} M b^{1} _{S_1}  \int \lvert  \widetilde \psi  ^2 \rvert\; dx  \lesssim \lvert  P\rvert  \,.  
\end{equation*}
This completes the proof of Lemma \ref{e.Dia}. 
\end{proof}

\begin{bibsection}
\begin{biblist}
\bib{MR1934198}{article}{
  author={Auscher, P.},
  author={Hofmann, S.},
  author={Muscalu, C.},
  author={Tao, T.},
  author={Thiele, C.},
  title={Carleson measures, trees, extrapolation, and $T(b)$ theorems},
  journal={Publ. Mat.},
  volume={46},
  date={2002},
  number={2},
  pages={257--325},
}

\bib{1011.1747}{article}{
  author={Auscher, Pascal},
  author={Routin, Eddy},
  title={Local Tb Theorems and Hardy Inequalities},
  journal={J. Geom. Anal.},
  volume={23},
  date={2013},
  number={1},
  pages={303--374},
}

\bib{MR2474120}{article}{
  author={Auscher, Pascal},
  author={Yang, Qi Xiang},
  title={BCR algorithm and the $T(b)$ theorem},
  journal={Publ. Mat.},
  volume={53},
  date={2009},
  number={1},
  pages={179--196},
}

\bib{MR1085827}{article}{
  author={Beylkin, G.},
  author={Coifman, R.},
  author={Rokhlin, V.},
  title={Fast wavelet transforms and numerical algorithms. I},
  journal={Comm. Pure Appl. Math.},
  volume={44},
  date={1991},
  number={2},
  pages={141--183},
}

\bib{MR1108183}{incollection}{
  author={Burkholder, Donald L.},
  title={Explorations in martingale theory and its applications},
  booktitle={\'{E}cole d'\'{E}t\'e de {P}robabilit\'es de {S}aint-{F}lour {XIX}---1989},
  series={Lecture Notes in Math.},
  volume={1464},
  pages={1--66},
  publisher={Springer},
  address={Berlin},
  year={1991},
}

\bib{MR1096400}{article}{
  author={Christ, Michael},
  title={A $T(b)$ theorem with remarks on analytic capacity and the Cauchy integral},
  journal={Colloq. Math.},
  volume={60/61},
  date={1990},
  number={2},
  pages={601--628},
}

\bib{MR763911}{article}{
  author={David, Guy},
  author={Journ{\'e}, Jean-Lin},
  title={A boundedness criterion for generalized Calder\'on-Zygmund operators},
  journal={Ann. of Math. (2)},
  volume={120},
  date={1984},
  number={2},
  pages={371--397},
}

\bib{MR1110189}{article}{
  author={Figiel, Tadeusz},
  title={Singular integral operators: a martingale approach},
  conference={ title={Geometry of Banach spaces}, address={Strobl}, date={1989}, },
  book={ series={London Math. Soc. Lecture Note Ser.}, volume={158}, publisher={Cambridge Univ. Press}, place={Cambridge}, },
  date={1990},
  pages={95--110},
}

\bib{0705.0840}{article}{
  author={Hofmann, Steve},
  title={A proof of the local $Tb$ Theorem for standard Calder\'on-Zygmund operators},
  date={2007},
  eprint={http://arxiv.org/abs/0705.0840},
}

\bib{MR2664559}{article}{
  author={Hofmann, Steve},
  title={Local $T(b)$ theorems and applications in PDE},
  conference={ title={Harmonic analysis and partial differential equations}, },
  book={ series={Contemp. Math.}, volume={505}, publisher={Amer. Math. Soc.}, place={Providence, RI}, },
  date={2010},
  pages={29--52},
}

\bib{MR2912709}{article}{
  author={Hyt{\"o}nen, Tuomas P.},
  title={The sharp weighted bound for general Calder\'on-Zygmund operators},
  journal={Ann. of Math. (2)},
  volume={175},
  date={2012},
  number={3},
  pages={1473--1506},
}

\bib{0911.4387}{article}{
  author={Hyt{\"o}nen, Tuomas},
  author={Martikainen, Henri},
  title={Non-homogeneous $Tb$ theorem and random dyadic cubes on metric measure spaces},
  journal={J. Geom. Anal.},
  volume={22},
  date={2012},
  number={4},
  pages={1071--1107},
}

\bib{1011.0642}{article}{
  author={Hyt{\"o}nen, Tuomas},
  author={Martikainen, Henri},
  title={On general local $Tb$ theorems},
  journal={Trans. Amer. Math. Soc.},
  volume={364},
  date={2012},
  number={9},
  pages={4819--4846},
}

\bib{hytonen_nazarov}{article}{
  author={Hyt\"onen, Tuomas},
  author={Nazarov, Fedor},
  title={The local $Tb$ theorem with rough test functions},
  date={2012},
  eprint={http://arxiv.org/abs/1206.0907},
}

\bib{1201.0648}{article}{
  author={Hyt\"onen, Tuomas P.},
  author={V\"ah\"akangas, Antti V.},
  title={The local non-homogeneous $Tb$ theorem for vector-valued functions},
  eprint={http://arxiv.org/abs/1201.0648},
  date={2012},
}

\bib{lacey_martikainen}{article}{
  author={Lacey, Michael T.},
  author={Martikainen, Henri},
  title={Local $Tb$ theorem with L2 testing conditions and general measures: Square functions},
  date={2013},
  eprint={http://arxiv.org/abs/1308.4571},
}

\bib{1108.2319}{article}{
  author={Lacey, Michael T.},
  author={Sawyer, Eric T.},
  author={Shen, Chun-Yen},
  author={Uriarte-Tuero, Ignacio},
  title={The Two Weight Inequality for Hilbert Transform, Coronas, and Energy Conditions},
  eprint={http://www.arxiv.org/abs/1108.2319},
}

\bib{1201.4319}{article}{
  author={Lacey, Michael T.},
  author={Sawyer, Eric T.},
  author={Shen, Chun-Yen},
  author={Uriarte-Tuero, Ignacio},
  title={Two Weight Inequality for the Hilbert Transform: A Real Variable Characterization},
  eprint={http://www.arxiv.org/abs/1201.4319},
}

\bib{lacey_vahakangas}{article}{
  author={Lacey, Michael T.},
  author={V\"ah\"akangas, Antti V.},
  title={Non-Homogeneous Local $T1$ Theorem: Dual Exponents},
  date={2013},
  eprint={http://arxiv.org/abs/1301.5858},
}

\bib{lv-perfect}{article}{
  author={Lacey, Michael T.},
  author={V\"ah\"akangas, Antti V.},
  title={The Perfect Local $ Tb$ Theorem and Twisted Martingale Transforms},
  eprint={http://www.arxiv.org/abs/1204.6526},
  journal={Proc. Amer. Math. Soc., to appear},
  date={2012},
}

\bib{NTV1}{article}{
  author={Nazarov, F.},
  author={Treil, S.},
  author={Volberg, A.},
  title={The $Tb$-theorem on non-homogeneous spaces},
  journal={Acta Math.},
  volume={190},
  date={2003},
  number={2},
  pages={151--239},
}

\bib{MR1909219}{article}{
  author={Nazarov, F.},
  author={Treil, S.},
  author={Volberg, A.},
  title={Accretive system $Tb$-theorems on nonhomogeneous spaces},
  journal={Duke Math. J.},
  volume={113},
  date={2002},
  number={2},
  pages={259--312},
}

\bib{1003.1596}{article}{
  author={Nazarov, F.},
  author={Treil, S.},
  author={Volberg, A.},
  title={ Two weight estimate for the Hilbert transform and Corona decomposition for non-doubling measures},
  date={2004},
  eprint={http://arxiv.org/abs/1003.1596},
}

\bib{MR1756958}{article}{
  author={Petermichl, Stefanie},
  title={Dyadic shifts and a logarithmic estimate for Hankel operators with matrix symbol},
  journal={C. R. Acad. Sci. Paris S\'er. I Math.},
  volume={330},
  date={2000},
  number={6},
  pages={455--460},
}

\bib{V}{book}{
  author={Volberg, A.},
  title={Calder\'on-Zygmund capacities and operators on nonhomogeneous spaces},
  series={CBMS Regional Conference Series in Mathematics},
  volume={100},
  publisher={Published for the Conference Board of the Mathematical Sciences, Washington, DC},
  date={2003},
  pages={iv+167},
  isbn={0-8218-3252-2},
}

\end{biblist}
\end{bibsection}

\end{document}